\numberwithin{equation}{section}
\newtheorem{thm}[equation]{Theorem}
\newtheorem*{thma*}{Theorem A}
\newtheorem*{thmb*}{Theorem B}
\newtheorem*{thmc*}{Theorem C}
\newtheorem*{thmd*}{Theorem D}
\newtheorem{cor}[equation]{Corollary}
\newtheorem{lem}[equation]{Lemma}
\newtheorem{prop}[equation]{Proposition}
\theoremstyle{definition}
\newtheorem{defn}[equation]{Definition}
\def\R{\mathbb R}
\def\Z{\mathbb Z}
\def\A{\mathbb A}
\def\Q{\mathbb Q}
\def\C{\mathbb C}
\def\K{\mathbb K}
\def\bM{\mathbb M}
\def\g{\mathfrak g}
\def\gl{\mathfrak{gl}}
\def\so{\mathfrak{so}}
\def\s{\mathfrak s}
\def\O{\mathcal O}
\def\k{\mathfrak k}
\def\c{\mathfrak c}
\def\<{\langle}
\def\>{\rangle}
\def\M{\mathcal{M}}
\def\tM{\widetilde{\mathcal{M}}}
\def\cE{\mathcal{E}}
\def\cT{\mathcal{T}}
\def\cH{\mathcal{H}}
\def\GL{{\rm GL}}
\def\SO{{\rm SO}}
\def\Orth{{\rm O}}
\def\Unit{{\rm U}}
\def\G{\mathcal{G}}
\def\T{\mathcal{T}}
\def\Hom{{\rm Hom}}
\def\Whit{\mathcal{W}}
\def\z{\mathfrak{z}}
\def\autc{{\rm Aut}(\C)}
\def\Sym{{\rm Sym}}
\def\Coh{{\rm Coh}} 
\def\ul{\underline}
\newcommand{\bfgreek}[1]{\bm{\@nameuse{up#1}}}
\title[Critical values of $L$-functions]{Critical values of 
Rankin--Selberg $L$-functions for $\GL_n \times \GL_{n-1}$ and the symmetric cube $L$-functions for $\GL_2$}
\author{\bf A. Raghuram}
\date{\today}
\address{A. Raghuram, Indian Institute of Science Education and Research, Dr.Homi Bhabha Road, Pashan, Pune 411008, INDIA.}
\email{raghuram@iiserpune.ac.in}
\subjclass[2010]{Primary: 11F67; Secondary: 11F41, 11F70, 11F75, 22E55}
\thanks{This work is partially supported by the National Science Foundation (NSF), award number DMS-0856113, and an Alexander von Humboldt Research Fellowship.}
\begin{document}

\maketitle

\smallskip 
 
%\tableofcontents

\section{\bf Introduction and statements of results}
In a previous article \cite{raghuram-imrn} an algebraicity result for the central critical value for $L$-functions for 
$\GL_n \times \GL_{n-1}$ over $\Q$  was proved assuming the validity of a nonvanishing hypothesis involving archimedean integrals. The purpose of this article is to generalize \cite[Thm.\,1.1]{raghuram-imrn} for all critical values for $L$-functions for $\GL_n \times \GL_{n-1}$ {\it over any number field $F$} while using the period relations of 
\cite{raghuram-shahidi-imrn} and some additional inputs as will be explained below. Thanks to a recent preprint of Binyong Sun \cite{sun}, the nonvanishing hypothesis has now been proved, and so one may claim that the results 
of this article are unconditional. 
Using such results for $\GL_3 \times \GL_2$, new unconditional algebraicity result for the special values of symmetric cube $L$-functions for $\GL_2$ over $F$ have been proved. 
Previously, algebraicity results for the critical values of symmetric cube $L$-functions for $\GL_2$ have been known only in special cases: see Garrett--Harris \cite{garrett-harris}, Kim--Shahidi \cite{kim-shahidi-israel},   
Grobner--Raghuram \cite{grobner-raghuram-ajm}, and 
Januszewski \cite{januszewski}.

\subsection{\bf $L$-functions for $\GL_n \times \GL_{n-1}$}
Suppose $\A_F$ is the ring of ad\`eles of $F.$ Let $\Pi$ be a regular algebraic cuspidal automorphic representation of ${\rm GL}_n(\A_F).$ Such a representation contributes to the cuspidal cohomology of 
$G_n := {\rm Res}_{F/\Q}(\GL_n / F)$ with coefficients in a sheaf $\tM_\mu$ attached to an algebraic irreducible representation
$\M_\mu$ of $G_n$ with highest weight $\mu$. This information will be denoted as $\Pi \in \Coh(G_n, \mu)$. Similarly, let $\Sigma$ be a regular algebraic cuspidal automorphic representation of ${\rm GL}_{n-1}(\A_F),$ 
and let $\Sigma \in \Coh(G_{n-1}, \lambda).$  
Consider the Rankin--Selberg $L$-function $L(s, \Pi \times \Sigma)$ attached to such a pair of cohomological representations $(\Pi,\Sigma)$.  Algebraicity results for the critical values of 
$L(s, \Pi \times \Sigma)$ are proved under a compatibility condition on the weights $\mu$ and $\lambda.$

\smallskip

Take a representation $\Pi \in \Coh(G_n, \mu^{\sf v})$ as above; henceforth, as in \cite{raghuram-imrn}, working with the dual weight $\mu^{\sf v}$ is only for  convenience.  Let $\Pi = \Pi_\infty \otimes \Pi_f$ be the usual decomposition of $\Pi$ into its archimedean part $\Pi_\infty$ and 
its finite part $\Pi_f$. The rationality field of $\Pi$ is denoted $\Q(\Pi)$; it is a number field.
For a given weight $\mu$, the representation $\M_\mu^{\sf v}$ is defined over a number field $\Q(\mu)$, and by Clozel~\cite{clozel}, it is known that cuspidal cohomology has a $\Q(\mu)$-structure;  
hence the realization of $\Pi_f$ as a Hecke-summand in cuspidal cohomology in lowest possible degree has a  
$\Q(\Pi)$-structure. The choice of lowest possible degree--as will be explained below--is absolutely crucial for this paper. On the other hand, the Whittaker model $\Whit(\Pi_f)$ of the finite part of the representation admits a $\Q(\Pi)$-structure. By comparing these two $\Q(\Pi)$-structures, 
in a previous article with Shahidi \cite{raghuram-shahidi-imrn}, certain 
periods $p^{\epsilon}(\Pi) \in \C^\times$ were defined and studied; here 
$\epsilon = (\epsilon_v)_{v \in S_r}$ is a collection of signs indexed by the set $S_r$ of real places of $F$. These signs can be arbitrary if $n$ is even, and are canonically determined by $\Pi_\infty$ if $n$ is odd; if the number of real places is $r_1$, then there are $2^{r_1}$ such periods of $\Pi$ if $n$ is even and only one period if $n$ is odd. 
For any $\sigma \in \autc$, one knows that ${}^\sigma\Pi \in \Coh(G_n, {}^\sigma\!\mu^{\sf v})$; indeed, one defines  
periods $p^{\epsilon}({}^\sigma\Pi)$ simultaneously for all the conjugates of $\Pi.$ The collection of periods 
$\{p^{\epsilon}({}^\sigma\Pi) : \sigma \in \autc\}$ is well-defined as an element of 
$(\Q(\Pi) \otimes \C)^\times/\Q(\Pi)^\times.$ When $F$ is totally real or a totally imaginary quadratic extension of a totally real field, then the constituents of the representation at infinity of 
${}^\sigma\Pi$ are, up to signs,  a permutation of the constituents of $\Pi_\infty$ (see \cite[Prop.\,3.2]{gan-raghuram}); however, in the case of a general number field this poses some additional difficulties involving a careful analysis at infinity. This issue is related to the set of possible weights that can support cuspidal cohomology for $\GL_n/F$. We call them strongly pure weights; see  \ref{sec:purity}. Henceforth, let $\mu \in X^+_{00}(T_n)$ stand for a dominant integral strongly pure weight and consider 
$\Pi \in \Coh(G_n, \mu^{\sf v}).$ The first main theorem of this article is the following:

\begin{thm}
\label{thm:main}
Let $\mu \in X^+_{00}(T_n)$ and $\Pi \in {\rm Coh}(G_n,\mu^{\sf v})$ for $n \geq 2.$ 
Similarly, let $\lambda \in X^+_{00}(T_{n-1})$ and $\Sigma \in {\rm Coh}(G_{n-1}, \lambda^{\sf v})$. 
Assume that there is an integer $j$ such that 
$\M_{\lambda + j} = \M_{\lambda} \otimes {\rm det}^j$ appears in the restriction to $G_{n-1}$ of $\M_{\mu^{\sf v}}$, i.e., 
\begin{equation}
\label{eqn:comp}
\left\{ j \in \Z\ : \ {\rm Hom}_{G_{n-1}}(\M_{\mu^{\sf v}}, \M_{\lambda + j}) \neq (0) \right\} \mbox{is a non-empty set}. 
\end{equation}
Let $s=\tfrac12 + m \in \tfrac12 + \Z$ be critical for $L_f(s, \Pi \times \Sigma)$ which is the finite part of the 
Rankin--Selberg $L$-function attached to the pair $(\Pi,\Sigma)$. Then, there exist signs 
$\epsilon = (\epsilon_v)_{v \in S_r}$ and $\eta = (\eta_v)_{v \in S_r}$ with $\eta_v = (-1)^n\epsilon_v$ attached to the 
data $(\Pi_\infty,\Sigma_\infty)$ as in \ref{sec:signs}, such that: 

\begin{enumerate}

\item if $n$ is even, then there exists $p^{\epsilon,\eta}_{\infty}(\mu+m,\lambda) \in \C^\times$, 
such that for any $\sigma \in {\rm Aut}({\mathbb C})$ we have 

$$
\sigma\left(\frac{L_f(\tfrac12 + m,\Pi \times \Sigma)}
{p^{\epsilon_m\epsilon}(\Pi)  \,  p^{\eta}(\Sigma)  \,  
\G(\omega_{\Sigma_f})  \, p_\infty^{\epsilon,\eta}(\mu+m,\lambda)}\right)
\  = \  
\frac{L_f(\tfrac12 + m, {}^{\sigma}\Pi \times {}^{\sigma}\Sigma)}
{p^{\epsilon_m{} \epsilon}({}^{\sigma}\Pi) \, p^{\eta}({}^{\sigma}\Sigma) \, 
\G(\omega_{{}^{\sigma}\Sigma_f}) \,
p_{\infty}^{\epsilon, \eta}({}^{\sigma}\!\mu+m, {}^{\sigma}\!\lambda)}, 
$$

\noindent
where $\epsilon_m = (-1)^m$ and $\G(\omega_{\Sigma_f})$ is the Gauss sum attached to the central character of $\Sigma$. In particular, we have   
$$
L_f(\tfrac12 + m,\Pi \times \Sigma)\  
\sim_{{\mathbb Q}(\Pi,\Sigma)} \ 
p^{\epsilon_m \epsilon}(\Pi)\, p^{\eta}(\Sigma)\, \G(\omega_{\Sigma_f})\, p_{\infty}^{\epsilon,\eta}(\mu+m,\lambda), 
$$
where, by $\sim_{{\mathbb Q}(\Pi,\Sigma)}$, we mean up to an element of the number field $\Q(\Pi,\Sigma)$ which is the
compositum of the rationality fields ${\mathbb Q}(\Pi)$ and ${\mathbb Q}(\Sigma)$ of $\Pi$ and $\Sigma$ respectively.

\medskip

\item If $n$ is odd, then there exists $p^{\epsilon,\eta}_{\infty}(\mu,\lambda+m) \in \C^\times$, 
such that  for any $\sigma \in {\rm Aut}({\mathbb C})$ we have

$$
\sigma\left(\frac{L_f(\tfrac12 + m,\Pi \times \Sigma)}
{p^{\epsilon}(\Pi)  \,  p^{\epsilon_m\eta}(\Sigma)  \,  
\G(\omega_{\Sigma_f})  \, p_\infty^{\epsilon,\eta}(\mu,\lambda+m)}\right)
\  = \  
\frac{L_f(\tfrac12 + m, {}^{\sigma}\Pi \times {}^{\sigma}\Sigma)}
{p^{\epsilon}({}^{\sigma}\Pi) \, 
p^{\epsilon_m \eta}({}^{\sigma}\Sigma) \, 
\G(\omega_{{}^{\sigma}\Sigma_f}) \,
p_{\infty}^{\epsilon, \eta}({}^{\sigma}\!\mu, {}^{\sigma}\!\lambda + m)}. 
$$

\noindent
In particular, 
$$
L_f(\tfrac12 + m,\Pi \times \Sigma)\  
\sim_{{\mathbb Q}(\Pi,\Sigma)} \ 
p^{\epsilon}(\Pi)\, p^{\epsilon_m  \eta}(\Sigma)\, \G(\omega_{\Sigma_f})\, p_{\infty}(\mu,\lambda+m). 
$$
\end{enumerate}
\end{thm}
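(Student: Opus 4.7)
The plan is to adapt the cohomological method of \cite{raghuram-imrn, raghuram-shahidi-imrn} so that it handles an arbitrary critical point $s=\tfrac12+m$ and an arbitrary number field $F$. I would proceed in four broad steps.

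\emph{Reduction by twisting.} First, I would absorb the integer $m$ by twisting one of the two representations by $|\!\det\!|^m$, so that the critical point $s=\tfrac12+m$ for $L(s,\Pi\times\Sigma)$ becomes the near-central point $s=\tfrac12$ for a twisted $L$-function. The parity of $n$ dictates which representation receives the twist: for $n$ even it is applied to $\Pi$, shifting its cohomological weight from $\mu^{\sf v}$ to $(\mu+m)^{\sf v}$; for $n$ odd it is applied to $\Sigma$, shifting $\lambda^{\sf v}$ to $(\lambda+m)^{\sf v}$. This dichotomy is dictated by the parity of the critical set of $L(s,\Pi\times\Sigma)$ and accounts for the asymmetry between assertions (1) and (2). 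One must check that the twisted representation remains cohomological with a strongly pure weight and that the compatibility \eqref{eqn:comp} survives with $j=m$ as the admissible shift.

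\emph{Cohomological interpretation at the near-central point.} After the twist, I would realize $\Pi_f$ and $\Sigma_f$ as Hecke summands in cuspidal cohomology in the lowest possible degrees $b_n$ and $b_{n-1}$. Working in lowest degree---emphasized in the introduction and already essential in \cite{raghuram-imrn}---is what makes $b_{n-1}$ equal to the top cohomological degree of the locally symmetric space for $G_{n-1}$, so that a cup product of a $\Pi_f$-class pulled back along the embedding $\iota\colon G_{n-1}\hookrightarrow G_n$ with a $\Sigma_f$-class pairs against the fundamental class. The compatibility \eqref{eqn:comp} supplies the $G_{n-1}$-equivariant map $\M_{\mu^{\sf v}}\to\M_{\lambda+j}$ that makes this pairing take values in $\C$.

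\emph{Evaluation via Rankin--Selberg and extraction of periods.} Using the global Rankin--Selberg integral of Jacquet--Piatetski-Shapiro--Shalika, and choosing cohomological test vectors that factor as local new vectors at finite places and specific archimedean generators, I would compute the above pairing as the $L$-value $L_f(\tfrac12+m,\Pi\times\Sigma)$ times controlled archimedean integrals. Translating between the Whittaker $\Q(\Pi)$-structure and the Betti $\Q(\Pi)$-structure introduces the periods $p^{\epsilon_m\epsilon}(\Pi)$ and $p^{\eta}(\Sigma)$ of \cite{raghuram-shahidi-imrn} in the even case (symmetrically $p^{\epsilon}(\Pi)$ and $p^{\epsilon_m\eta}(\Sigma)$ in the odd case), together with the Gauss sum $\G(\omega_{\Sigma_f})$ coming from passing between the Whittaker vector of $\Sigma_f$ and its arithmetic normalization. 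The sign superscripts are read off from the archimedean analysis in \ref{sec:signs}, with $\epsilon_m=(-1)^m$ recording the effect of the twist, and the archimedean integrals---nonvanishing by \cite{sun}---are packaged into the period $p^{\epsilon,\eta}_\infty$.

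\emph{Galois descent and the main obstacle.} Finally, I would apply an arbitrary $\sigma\in\autc$ and compare. Each ingredient has a canonical $\sigma$-analogue: the Hecke summand of ${}^\sigma\Pi_f$ in the cohomology of the sheaf for ${}^\sigma\!\mu^{\sf v}$, the periods $p^{\epsilon_m\epsilon}({}^\sigma\Pi)$ and $p^{\eta}({}^\sigma\Sigma)$, the Gauss sum (with its classical Galois transform), and the archimedean period attached to the conjugate weights. Equivariance of the cup-product construction under $\sigma$, combined with the $\Q(\Pi,\Sigma)$-rationality of the cohomological pairing, then delivers the reciprocity. The main difficulty is the archimedean Galois analysis for a general number field: unlike the totally real or CM cases (cf.\ \cite{gan-raghuram}), the local constituents of $({}^\sigma\Pi)_\infty$ need not be a signed permutation of those of $\Pi_\infty$. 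Controlling how the sign patterns $\epsilon,\eta$ and the period $p_\infty^{\epsilon,\eta}$ transform under $\sigma$ is precisely where the hypothesis of strongly pure weights $\mu\in X^+_{00}(T_n)$ becomes essential, and this is the most delicate step of the proof.
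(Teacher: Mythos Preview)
Your proposal is correct and follows essentially the same route as the paper: first reduce to the central point $s=\tfrac12$ by Tate-twisting the representation on the even side (the paper proves the $s=\tfrac12$ case first and then twists, but the two orderings are equivalent), then interpret the Rankin--Selberg integral as a Poincar\'e-duality pairing of bottom-degree cuspidal cohomology classes, invoke Sun's nonvanishing for the archimedean pairing, and check Galois equivariance of each ingredient in the resulting main identity using the period relations of \cite{raghuram-shahidi-imrn}. One small slip: it is $b_n^F+b_{n-1}^F$, not $b_{n-1}^F$ alone, that equals the dimension of $\tilde S^{G_{n-1}}$ (Prop.~\ref{prop:numerical}), and this is exactly the numerical coincidence that makes the cup product land in top degree.
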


\smallskip

Various special cases have been known: 
For $n=2$ and $F$ totally real, which is the classical Hilbert modular situation, the above theorem is due to 
Shiumra~\cite[Thm.\,4.3]{shimura-duke}, and for a general $F$, it is due to Hida~\cite[Thm.\,I (i)]{hida-duke}. 
For $F = \Q$ and $n \geq 2$, some variation of the above theorem has appeared in Kazhdan--Mazur--Schmidt \cite{kms}, Mahnkopf~\cite{mahnkopf-jussieu}, Kasten--Schmidt \cite{kasten-schmidt} and  
Raghuram~\cite{raghuram-imrn}.  Recently, Grobner--Harris \cite{grobner-harris} proved a similar result when $F$ is an imaginary quadratic field.  

\smallskip

The basic idea of the proof of Theorem~\ref{thm:main} involves interpreting the $L$-value as a Rankin--Selberg integral involving two carefully chosen cusp forms; see Prop.\,\ref{prop:rankin-selberg}. This integral is then interpreted as a map in cohomology stemming from Poincar\'e duality; see the diagram in \ref{sec:main-idea} and then see the main identity in 
Thm.\,\ref{thm:main-identity}. The reader is also referred to 
the introduction of \cite{raghuram-imrn} where this idea is explained in greater detail. 

\smallskip

We now address an important additional ingredient needed for the above generalization from $F=\Q$ to any number field. Kasten and Schmidt observed in \cite[Thm.\,2.3]{kasten-schmidt} that when $F=\Q$, if the weights $\mu$ and 
$\lambda$ satisfy (\ref{eqn:comp}), then for any $m \in \Z$ the point 
$\tfrac12 + m$ is critical for $L(s, \Pi \times \Sigma)$ 
if and only if $\M_\mu^{\sf v}$ contains $\M_{\lambda + m}.$ This is a purely local statement involving cohomological representations of $\GL_n(\R)$ and standard branching laws. What seems initially surprising, but rather natural after the fact, is that an identical statement (see Thm.\,\ref{thm:critical-compatible}) holds true for 
cohomological representations of $\GL_n(\C)$  although the proof turns out to be combinatorially more challenging than for $\GL_n(\R)$; the main steps in the proof of 
Thm.\,\ref{thm:critical-compatible} are contained in 
Lem.\,\ref{lem:1-2-crit}, Cor.\,\ref{cor:>implies-crit} and Prop.\,\ref{prop:crit-glnc}. (The reader should also look at 
Grobner--Harris \cite[Lem.\,4.7]{grobner-harris}.)  This particular observation, that we can deal with both real and complex cases on the same footing, was at the genesis of this article.

\medskip

\subsection{\bf Relation with motivic periods and motivic $L$-functions}

Let $M$ be a pure motive over $\Q$ with coefficients in a number field $\Q(M).$ Suppose $M$ is critical, then 
a celebrated conjecture of Deligne~\cite[Conj.\,2.8]{deligne} relates the critical values of its $L$-function $L(s,M)$ to certain periods that arise out of a comparison of the Betti and de~Rham realizations of the motive. One expects a cohomological cuspidal automorphic representation $\Pi$ to correspond to a motive $M(\Pi)$; one of the properties of this correspondence is that the standard $L$-function $L(s, \Pi)$ is the motivic $L$-function $L(s, M(\Pi))$ up to a shift in the $s$-variable; see Clozel \cite[Sect.\,4]{clozel}. With the current state of technology, it seems impossible to compare our periods $p^\epsilon(\Pi)$ with Deligne's periods $c^{\pm}(M(\Pi))$. However, one may ask if the periods of a tensor product motive decompose in a way suggested by our theorem. Bhagwat~\cite{bhagwat}  
has given a description of Deligne's periods $c^\pm$ for the tensor product $M \otimes M'$, where $M$ and $M'$ are two pure motives over $\Q$ all of whose nonzero Hodge numbers are one, in terms of the periods $c^\pm$ {\it and} some other finer invariants attached to $M$ and $M'$ by Yoshida~\cite{Yo}.  These period relations suggest that 
$p^\epsilon(\Pi)$ and $c^\pm(M(\Pi))$ are two very different kind of periods, and a direct comparison with Deligne's conjecture seems out of reach; however, see Grobner--Harris~\cite{grobner-harris}. 
Be that as it may, one can still claim that Thm.\,\ref{thm:main} is compatible with Deligne's conjecture by considering 
the behavior of $L$-values under twisting by characters. Blasius \cite{blasius} and Panchishkin \cite{panchishkin} have independently studied the behavior of Deligne's periods upon twisting the motive by a Dirichlet character (more generally by Artin motives). Using Deligne's conjecture, they predict the behavior of critical values of motivic $L$-functions upon twisting by Dirichlet characters. For a critical motive over $\Q$, assumed to be simple and of rank $2r$, this prediction looks like $L(m, M \otimes \chi_f)  \sim_{\Q(M,\chi)}  L(m,M) \G(\chi_f)^r.$
Applying this to the tensor product motive $M(\Pi) \otimes M(\Sigma)$, which has rank $n(n-1),$ we have the following compatibility with Deligne's conjecture:

\begin{cor}
\label{cor:blasius}
Let $\Pi \in {\rm Coh}(G_n,\mu^{\sf v})$ and $\Sigma \in {\rm Coh}(G_{n-1}, \lambda^{\sf v}).$ Assume that the compatibility condition (\ref{eqn:comp}) holds. For any critical point $s=\tfrac12 + m$ of $L_f(s, \Pi \times \Sigma)$ and for any character $\chi :  F^\times\backslash \A_F^\times \to \C^\times$ of finite order, and any $\sigma \in {\rm Aut}({\mathbb C})$ we have
$$
\sigma\left(\frac{L_f(\tfrac12 + m,\Pi \times \Sigma \otimes \chi)}
{\G(\chi)^{n(n-1)/2}L_f(\tfrac12 + m,\Pi \times \Sigma)}\right)  \ = \ 
\frac{L_f(\tfrac12 + m, {}^\sigma\Pi \times {}^\sigma\Sigma \otimes {}^\sigma\!\chi)}
{\G({}^\sigma\!\chi)^{n(n-1)/2}L_f(\tfrac12 + m, {}^\sigma\Pi \times {}^\sigma\Sigma)}. 
$$
\end{cor}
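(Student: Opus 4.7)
The plan is to apply Theorem~\ref{thm:main} to the pair $(\Pi,\Sigma)$ and, independently, to the twisted pair $(\Pi,\Sigma\otimes\chi)$, and then divide one Galois-equivariant identity by the other. Since $\chi$ has finite order, its archimedean component $\chi_\infty$ is a finite-order character at every infinite place, so $\Sigma\otimes\chi$ remains cohomological with the \emph{same} weight $\lambda$; that is, $\Sigma\otimes\chi\in\Coh(G_{n-1},\lambda^{\sf v})$. The compatibility condition~(\ref{eqn:comp}), being intrinsic to $\mu$ and $\lambda$, is preserved by the twist, and the critical set of the Rankin--Selberg $L$-function depends only on the archimedean data (up to signs); hence $s=\tfrac12+m$ remains critical for $L_f(s,\Pi\times(\Sigma\otimes\chi))$. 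Both hypotheses of Theorem~\ref{thm:main} are thus met for the twisted pair.

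Applying Theorem~\ref{thm:main} to both pairs (I take $n$ even for concreteness; the odd case is identical modulo a transposition of roles) and dividing the resulting Galois-equivariant identities, the period $p^{\epsilon_m\epsilon}(\Pi)$ cancels. What remains on the ``periods'' side of the ratio are three sources of $\chi$-dependence: (i) the Whittaker period for the twisted representation, $p^{\tilde\eta}(\Sigma\otimes\chi)$ versus $p^\eta(\Sigma)$; (ii) the Gauss sum of the central character, $\G(\omega_{\Sigma_f}\chi_f^{n-1})$ versus $\G(\omega_{\Sigma_f})$; and (iii) the archimedean period $p_\infty^{\tilde\epsilon,\tilde\eta}(\mu+m,\lambda)$ versus $p_\infty^{\epsilon,\eta}(\mu+m,\lambda)$, where the tilded signs are those attached to $(\Pi_\infty,(\Sigma\otimes\chi)_\infty)$. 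By the period relations for finite-order twists established in \cite{raghuram-shahidi-imrn}, (i) contributes a factor $\G(\chi_f)^{(n-1)(n-2)/2}$, up to a $\Q(\Sigma,\chi)$-multiple and a sign adjustment on $\tilde\eta$. An elementary manipulation of Gauss sums, combined with the standard identity $\G(\chi_f^{n-1})\sim_{\Q(\chi)}\G(\chi_f)^{n-1}$ for finite-order $\chi$, shows that (ii) contributes $\G(\chi_f)^{n-1}$. The ratio in (iii) depends only on $\mu,\lambda,m$ and on $\chi_\infty$, and is Galois-equivariantly rational.

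The exponents from (i) and (ii) sum to $\tfrac{(n-1)(n-2)}{2}+(n-1)=\tfrac{n(n-1)}{2}$, matching the exponent of $\G(\chi)$ in the statement. Galois equivariance of the final identity is then inherited from that of Theorem~\ref{thm:main} applied to both $L$-values, together with the Galois behavior of the period relations and the Gauss-sum identities used above. The main obstacle is the careful bookkeeping of the sign perturbations $\epsilon\mapsto\tilde\epsilon$ and $\eta\mapsto\tilde\eta$ induced by $\chi_\infty$, and the corresponding adjustment in the archimedean period, since these periods encode subtle parity information at real places of $F$ that has to be tracked $\autc$-equivariantly; a secondary technical point is the $\Q(\chi)$-rationality of $\G(\chi_f^{n-1})/\G(\chi_f)^{n-1}$ when the conductor of $\chi_f$ is not squarefree.
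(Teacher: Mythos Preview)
Your overall strategy—apply Theorem~\ref{thm:main} to both $(\Pi,\Sigma)$ and $(\Pi,\Sigma\otimes\chi)$, divide, and invoke the period relation~(\ref{eqn:period-reln}) together with the multiplicativity of Gauss sums—is exactly the intended route, and your exponent count $(n-1)(n-2)/2 + (n-1) = n(n-1)/2$ is correct. The paper does not spell out the argument, but this is what the word ``Corollary'' is pointing to.

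There is, however, a genuine slip in your third paragraph. You assert that ``the period $p^{\epsilon_m\epsilon}(\Pi)$ cancels,'' but this is false in general. For $n$ even the sign $\eta$ (and hence $\epsilon=\eta$) is \emph{determined} by $\omega_{\Sigma_v}(-1)$ via~\ref{sec:signs}, so replacing $\Sigma$ by $\Sigma\otimes\chi$ shifts $\eta$ to $\tilde\eta = \eta\cdot\epsilon_\chi$ and hence $\epsilon$ to $\tilde\epsilon = \epsilon\cdot\epsilon_\chi$. The period of $\Pi$ appearing in the twisted identity is therefore $p^{\epsilon_m\epsilon\cdot\epsilon_\chi}(\Pi)$, not $p^{\epsilon_m\epsilon}(\Pi)$, and these do \emph{not} cancel upon division. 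What survives is the product
\[
\frac{p^{\epsilon_m\epsilon\cdot\epsilon_\chi}(\Pi)}{p^{\epsilon_m\epsilon}(\Pi)} \cdot
\frac{p_\infty^{\epsilon\cdot\epsilon_\chi,\,\eta\cdot\epsilon_\chi}(\mu+m,\lambda)}{p_\infty^{\epsilon,\eta}(\mu+m,\lambda)},
\]
and neither factor is obviously $\autc$-equivariant from Theorem~\ref{thm:main} or~(\ref{eqn:period-reln}) alone: the period relation only compares $p^{\epsilon\cdot\epsilon_\chi}(\Pi\otimes\chi)$ with $p^{\epsilon}(\Pi)$, not two periods of the \emph{same} $\Pi$ at different signs. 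You yourself flag the ``bookkeeping of sign perturbations'' as the main obstacle, but you do not actually discharge it—your claim that the archimedean ratio is ``Galois-equivariantly rational'' is asserted without justification.

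A cleaner way to organize the argument is to absorb $\chi$ into the \emph{even}-rank factor (so twist $\Pi$ when $n$ is even, and $\Sigma$ when $n$ is odd). Since the sign in~\ref{sec:signs} is dictated by the odd-rank representation, the pair $(\epsilon,\eta)$ is then literally unchanged, the archimedean period $p_\infty^{\epsilon,\eta}$ and the Gauss sum $\G(\omega_{\Sigma_f})$ are identical in the two identities, and only the single ratio $p^{\epsilon_m\epsilon}(\Pi\otimes\chi)/p^{\epsilon_m\epsilon}(\Pi)$ remains, to which~(\ref{eqn:period-reln}) applies directly with the exponent $n(n-1)/2$. This avoids the three-way split (i)--(iii) and the Gauss-sum identity $\G(\chi^{n-1})\sim\G(\chi)^{n-1}$ altogether; but note that even here the period relation introduces a sign shift $\epsilon_m\epsilon\mapsto\epsilon_m\epsilon\cdot\epsilon_\chi$ on the $\Pi$-side, so the residual ratio of $\Pi$-periods at distinct signs must still be argued to be $\autc$-equivariant. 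That step is the real content you have not supplied.
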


In the above corollary, the compatibility condition (\ref{eqn:comp}) has an important bearing, because, if not, then 
already for $\GL_2 \times \GL_1$ over an imaginary quadratic field, twisting by a finite order character can change the critical $L$-value by a quantity not lying in any field that is cyclotomic over the coefficient fields involved. (See, for example, Hida~\cite[Thm.\,I, (ii)]{hida-duke} when the set $A$ therein is not empty.) This phenomenon is further elaborated upon in \ref{sec:subtlety}.

\medskip

It should be noted that Theorem~\ref{thm:main} also gives some evidence toward a conjecture due to Gross \cite[Conjecture 2.7(ii)]{deligne} which says that the order of vanishing of a motivic $L$-function at a critical point is independent of which conjugate of the motive we are looking at, i.e., if $M$ is critical, then ${\rm ord}_{s=0} L(s, \sigma, M)$ is independent of the embedding $\sigma : \Q(M) \to \C$. We are unable to say anything about the order of vanishing, however, it follows from our theorem that the property of vanishing is indeed independent of which particular conjugate of the representation we consider.

\begin{cor}
\label{cor:gross}
Let $\Pi \in {\rm Coh}(G_n,\mu^{\sf v})$ and $\Sigma \in {\rm Coh}(G_{n-1}, \lambda^{\sf v}).$ Assume that the compatibility condition (\ref{eqn:comp}) holds. For any critical point $\tfrac12 + m$ of $L_f(s, \Pi \times \Sigma)$ and 
$\sigma\in {\rm Aut}(\C)$ we have 
$$
L(\tfrac 12+m, \Pi \times \Sigma) = 0  \ \iff \  L(\tfrac 12+m,{}^\sigma\Pi \times {}^\sigma\Sigma) = 0.
$$
\end{cor}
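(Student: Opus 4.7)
The plan is to deduce this directly from Theorem~\ref{thm:main}, treating the statement as a consequence of the $\sigma$-equivariance of the algebraicity formula together with the observation that all the ``period'' factors occurring in the denominator are nonzero.

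First I would unwind the difference between $L(s,\Pi\times\Sigma)$ and its finite part $L_f(s,\Pi\times\Sigma)$: by the very definition of a critical point, the archimedean $L$-factor $L_\infty(\tfrac12+m,\Pi\times\Sigma)$ is finite and nonzero at $s=\tfrac12+m$, and the analogous statement holds for ${}^\sigma\Pi\times{}^\sigma\Sigma$ (the infinity type of a $\sigma$-conjugate is obtained by a permutation, up to signs, of the original constituents, as invoked in the discussion preceding the theorem, and the compatibility condition (\ref{eqn:comp}) is a purely combinatorial relation between weights that is preserved by $\sigma$; this is also the content of Thm.\,\ref{thm:critical-compatible}). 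Consequently $\tfrac12+m$ is also critical for $L_f(s,{}^\sigma\Pi\times{}^\sigma\Sigma)$, and the vanishing of $L$ at $\tfrac12+m$ is equivalent to the vanishing of $L_f$ at that point, for both $(\Pi,\Sigma)$ and $({}^\sigma\Pi,{}^\sigma\Sigma)$.

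Next I would note that every factor appearing in the denominator of the ratio in Thm.\,\ref{thm:main} is a nonzero complex number: the automorphic periods $p^{\epsilon_m\epsilon}(\Pi)$, $p^{\eta}(\Sigma)$ (and their conjugates) lie in $\C^\times$ by construction, the Gauss sum $\G(\omega_{\Sigma_f})$ of a unitary finite-order character is nonzero, and $p_\infty^{\epsilon,\eta}(\mu+m,\lambda)\in\C^\times$. Therefore, if $n$ is even, the quotient
\[
R(\Pi,\Sigma)\ :=\ \frac{L_f(\tfrac12+m,\Pi\times\Sigma)}{p^{\epsilon_m\epsilon}(\Pi)\,p^{\eta}(\Sigma)\,\G(\omega_{\Sigma_f})\,p_\infty^{\epsilon,\eta}(\mu+m,\lambda)}
\]
vanishes if and only if $L_f(\tfrac12+m,\Pi\times\Sigma)$ does, and similarly for the $\sigma$-conjugate data (with the analogous quotient $R({}^\sigma\Pi,{}^\sigma\Sigma)$); the odd $n$ case is handled identically using the corresponding formula in part (2) of Thm.\,\ref{thm:main}.

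Finally I would apply Thm.\,\ref{thm:main} itself, which asserts exactly that $\sigma(R(\Pi,\Sigma))=R({}^\sigma\Pi,{}^\sigma\Sigma)$ for every $\sigma\in\autc$. Since any field automorphism of $\C$ sends $0$ to $0$ and nothing else to $0$, one has $R(\Pi,\Sigma)=0$ if and only if $R({}^\sigma\Pi,{}^\sigma\Sigma)=0$, and combining this with the equivalence between vanishing of $L_f$ and $L$ noted above yields the corollary. There is no real obstacle here; the only point that merits explicit comment, and that I would spell out carefully, is the Galois-invariance of the criticality of $\tfrac12+m$, since the corollary is vacuous unless the value on the right-hand side is taken at a critical point of the $\sigma$-conjugate $L$-function as well.
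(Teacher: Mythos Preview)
Your proposal is correct and is exactly the argument the paper has in mind: the text preceding the corollary simply asserts that the vanishing statement ``follows from our theorem,'' without spelling out details, and your deduction from Thm.\,\ref{thm:main}---using that the periods, Gauss sum, and $p_\infty$ are all nonzero and that field automorphisms preserve vanishing---is precisely how one fills this in. Your additional remarks on why $L$ and $L_f$ vanish simultaneously at critical points (archimedean $\Gamma$-factors never vanish) and on the Galois-stability of criticality are helpful clarifications that the paper leaves implicit.
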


\medskip

As in my paper with Wee Teck Gan \cite{gan-raghuram}, Thm.\,\ref{thm:main} implies an arithmeticity result for 
$\GL_{n-1}$-periods of automorphic representations of $\GL_n \times \GL_{n-1}.$ See \cite[Thm.\,7.1]{gan-raghuram}. 
As explained in the introduction of that paper, this consequence is analogous to Gross's conjecture but for automorphic periods. We state this consequence as the following 

\begin{cor}\label{cor:arithmeticity}
Let $\Pi \in {\rm Coh}(G_n,\mu^{\sf v})$ and $\Sigma \in {\rm Coh}(G_{n-1}, \lambda^{\sf v})$ be as in Thm.\,\ref{thm:main}. 
Suppose $\mu$ and $\lambda$ satisfy (\ref{eqn:comp}). Consider the representation $\Pi \otimes \Sigma$ of 
$(G_n \times G_{n-1})(\A)$. Let $\Delta G_{n-1}$ be the image of the diagonal embedding of $G_{n-1}$ in 
$G_n \times G_{n-1}.$ Then 
$$
\Pi \otimes \Sigma \ {\rm is} \ \Delta G_{n-1} \ {\rm distinguished} \implies 
{}^\sigma\Pi \otimes {}^\sigma\Sigma \ {\rm is}\  \Delta G_{n-1} \ {\rm distinguished} \ \forall \sigma \in \autc.
$$
\end{cor}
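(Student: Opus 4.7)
The plan is to reduce $\Delta G_{n-1}$-distinction to a non-vanishing statement for the central critical value $L_f(\tfrac{1}{2}, \Pi \times \Sigma)$ and then apply Cor.\,\ref{cor:gross} directly, mirroring the strategy of \cite[Thm.\,7.1]{gan-raghuram}. By local multiplicity one for generic representations of $\GL_n(F_v) \times \GL_{n-1}(F_v)$, for cuspidal $\Pi$ and $\Sigma$ with matched central characters the space of $\Delta G_{n-1}(\A_F)$-invariant functionals on $\Pi \otimes \Sigma$ is at most one-dimensional and, when nonzero, is spanned by the diagonal period integral
\[
P(\phi \otimes \phi') \ = \ \int_{Z_{n-1}(\A_F) G_{n-1}(F) \backslash G_{n-1}(\A_F)} \phi\begin{pmatrix} g & 0 \\ 0 & 1 \end{pmatrix} \phi'(g)\,dg.
\]
Thus $\Pi \otimes \Sigma$ is $\Delta G_{n-1}$-distinguished precisely when $P$ is a nonzero functional on $\Pi \otimes \Sigma$.

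I would next unfold $P$ using the Whittaker expansion \`a la Jacquet--Piatetski-Shapiro--Shalika. For decomposable test vectors this factors $P$ into a product of local zeta integrals at $s = \tfrac{1}{2}$: at finite places standard test vectors make the local integrals nonzero, at archimedean places the nonzero choice of data is supplied by the now-established nonvanishing theorem of Sun \cite{sun}, and the remaining global contribution is precisely $L_f(\tfrac{1}{2}, \Pi \times \Sigma)$. Hence
\[
\Pi \otimes \Sigma \ \text{is } \Delta G_{n-1}\text{-distinguished} \ \iff \ L_f(\tfrac{1}{2}, \Pi \times \Sigma) \neq 0.
\]

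The rest is formal: under (\ref{eqn:comp}) the point $s = \tfrac{1}{2}$ (that is, $m = 0$) lies in the critical strip of $L_f(s, \Pi \times \Sigma)$, so Cor.\,\ref{cor:gross} gives $L_f(\tfrac{1}{2}, \Pi \times \Sigma) = 0 \iff L_f(\tfrac{1}{2}, {}^\sigma\Pi \times {}^\sigma\Sigma) = 0$. Reading the equivalence above in the reverse direction for the Galois-conjugate pair $({}^\sigma\Pi, {}^\sigma\Sigma)$ yields the desired $\Delta G_{n-1}$-distinction of ${}^\sigma\Pi \otimes {}^\sigma\Sigma$, completing the argument.

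I expect the main obstacle to be ensuring that the equivalence ``distinction iff $L_f(\tfrac{1}{2}) \neq 0$'' is itself Galois equivariant: one needs the local test-vector choices realizing non-vanishing of the zeta integrals at ramified finite places to be made uniformly across all $\sigma \in \autc$, and the central-character twist to be compatible with the rational structures on the Whittaker models. Both points are handled in the analogous setting of \cite{gan-raghuram} and should carry over here with the refined archimedean input of \cite{sun}.
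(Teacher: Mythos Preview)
Your approach is the one the paper intends: it gives no self-contained proof but defers to \cite[Thm.\,7.1]{gan-raghuram}, and your sketch is a faithful reconstruction of that argument---reduce $\Delta G_{n-1}$-distinction to nonvanishing of the central Rankin--Selberg value via Jacquet--Piatetski-Shapiro--Shalika plus Sun~\cite{sun} at infinity, then invoke the Galois equivariance supplied by Thm.\,\ref{thm:main}.

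Two corrections. First, the diagonal period for $\GL_n \times \GL_{n-1}$ is taken over $G_{n-1}(F)\backslash G_{n-1}(\A_F)$ \emph{without} quotienting by the center, and there is no central-character matching condition: the period is exactly the integral $I(\tfrac12,\phi,\phi')$ of Sect.\,\ref{sec:rankin-selberg}, which converges absolutely because both $\phi$ and $\phi'$ are cusp forms. Second, your assertion that under (\ref{eqn:comp}) the point $m=0$ is critical is not justified as written: (\ref{eqn:comp}) only says \emph{some} $j$ lies in the compatibility set, hence by Thm.\,\ref{thm:critical-compatible} some $\tfrac12+j$ is critical, not necessarily $\tfrac12$. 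The route through Cor.\,\ref{cor:gross} at $m=0$ therefore needs $\mu^{\sf v}\succ\lambda$, i.e., the case $j=0$. In \cite{gan-raghuram} (and implicitly here) this is handled by working directly with the cohomological pairing $\langle \vartheta^\epsilon_{\Pi,0}, \vartheta^\eta_{\Sigma,0}\rangle$ of (\ref{eqn:global-pairing}) and Thm.\,\ref{thm:main-identity}: that pairing \emph{is} the period, its Galois equivariance comes from Poincar\'e duality, and the main identity already presupposes $\mu^{\sf v}\succ\lambda$. Your closing worry about making the local test-vector choices uniformly over $\sigma\in\autc$ is not the real obstacle---that is precisely what the rational structures on Whittaker models and the normalized classes $\vartheta^\epsilon_{\Pi,0}$, $\vartheta^\eta_{\Sigma,0}$ are designed to absorb.
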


\medskip

\subsection{\bf Symmetric power $L$-functions for $\GL_2$}  

Let $\pi$ be a cohomological cuspidal automorphic representation of $\GL_2$ over $F$. For any $r \geq 1$, let 
${\rm Sym}^r(\pi)$ denote the Langlands transfer of $\pi$ corresponding to the homomorphism 
$\Sym^r : \GL_2(\C) \to \GL_{r+1}(\C)$ of $L$-groups; ${\rm Sym}^r(\pi)$ 
is conjecturally an isobaric automorphic representation of $\GL_{r+1}$ over $F$. Such a transfer is known to exist for a general $\pi$ for $r \leq 4$ by the works of  Gelbart--Jacquet, Kim, and Kim--Shahidi, and for all $r$ if 
$\pi$ is dihedral.   
We define the $r$-th symmetric power $L$-function as the standard $L$-function of the $r$-th symmetric transfer of $\pi$, i.e., $L(s, \Sym^r, \pi) = L(s, \Sym^r(\pi))$; for more details and references, see 
\cite[Sect.\,3.1]{raghuram-shahidi-aim}, \cite[Sect.\,5.1.1]{raghuram-imrn} and Sect.\,\ref{sec:symm-powers} below. 
(For some new instances of symmetric power functoriality, such as for $r=6$ and $r=8$, albeit with some restrictions on $F$, see Clozel--Thorne~\cite{clozel-thorne}.)

Given a weight $\mu \in X^*_{00}(T_2)$ and  an integer $r \geq 1$, 
we define its symmetric power transfer $\Sym^r(\mu) \in X^*_{00}(T_{r+1})$ 
in Sect.\,\ref{sec:defn-symr-mu}. In Thm.\,\ref{thm:sym-coh} we prove 
$$
\pi \in \Coh(G_2, \mu^{\sf v}) \ \mbox{and if $\Sym^r(\pi)$ is cuspidal} 
\ \implies \ 
\Sym^r(\pi) \in \Coh(G_{r+1}, \Sym^r(\mu)^{\sf v}). 
$$
This says that symmetric power transfers preserves the property of being cohomological,  
generalizing  our previous result \cite[Thm.\,5.5]{raghuram-shahidi-aim} and lending further evidence to the discussion in \cite[Sect.\,5.2]{raghuram-shahidi-aim} relating functoriality and the property of being cohomological. 
Similarly, we can define a weight ${\rm det}(\mu)$ so that $\omega_\pi \in \Coh(G_1, {\rm det}(\mu)^{\sf v})$; 
see Sect.\,\ref{sec:explicate-gl2-gl1}.

In \cite{raghuram-imrn} Rankin--Selberg theory for $\GL_{n} \times \GL_{n-1}$ over $\Q$ was used to 
to get results on special values of odd symmetric power $L$-functions attached to modular forms. We completely work out this idea in the case of symmetric cube $L$-functions for cusp forms for $\GL_2$ over any number field. The starting point is the factorization: 
$$
L_f(s, \Sym^2(\pi) \times \pi \otimes \xi )
 \ =  \ 
L_f(s, \Sym^3(\pi) \otimes \xi)  \cdot L_f(s, \pi \otimes\omega_{\pi} \xi), 
$$
where $\xi$ is a Hecke character of $F$ of finite order. 
We know the special values of the left hand side (resp., the second factor on the right hand side) 
by the $\GL_3 \times \GL_2$ case (resp., $\GL_2 \times \GL_1$ case) 
of Thm.\,\ref{thm:main}; we deduce a result for the special values of symmetric cube $L$-functions. Take a 
half-integer $\tfrac12+m$ which is critical for $L_f(s, \Sym^3(\pi) \otimes \xi)$ and 
$L(s, \pi \otimes\omega_{\pi}\xi)$, then it is also critical for 
$L_f(s, \Sym^2(\pi) \times \pi \otimes \xi)$; see Sect.\,\ref{sec:critical-sym3} for a description of the set of critical points. 
We write 
$$
L_f(\tfrac12+m, \Sym^3(\pi) \otimes \xi)  \ = \  
\frac{L_f(\tfrac12+m, \Sym^2(\pi) \times \pi \otimes \xi )}{L_f(\tfrac12+m, \pi \otimes\omega_{\pi}\xi)}, 
$$
provided $L(\tfrac12+m, \pi \otimes\omega_{\pi}\xi) \neq 0$. 
For all critical points, except possibly for the central critical point, the denominator is indeed nonzero. 
If ${\sf w} = {\sf w}(\mu)$ is the purity weight of $\mu$, then $\pi = \pi^\circ \otimes |\ |^{{\sf w}/2}$ for a unitary cuspidal automorphic representation $\pi^\circ$, and   
$L(\tfrac12+m, \pi \otimes\omega_{\pi}\xi) =  L(\tfrac12+m + 3{\sf w}/2, \pi^\circ \otimes\omega_{\pi^\circ}\xi).$ Hence, 
$L(\tfrac12+m, \pi \otimes\omega_{\pi}\xi) = 0$ can happen only when $m = -3{\sf w}/2$. (This is the center of symmetry in the unitary case; note the necessary condition that ${\sf w}$ should be even.) 
In this particular case, we use the main theorem of Rohrlich~\cite{rohrlich} and introduce another twisting character making the $L$-value nonzero, i.e., after relabeling if necessary, we may take $\xi$ to be such that 
$L((1- 3{\sf w})/2, \pi \otimes\omega_{\pi}\xi) \neq0.$ 
Putting these together gives us the following

\begin{thm}\label{thm:sym3}
Let $\pi \in \Coh(G_2, \mu^{\sf v})$ for $\mu \in X^*_{00}(T_2).$ Assume that the pairs of weights 
$(\Sym^2(\mu), \mu)$ and $(\mu, {\rm det}(\mu))$ satisfy (\ref{eqn:comp}). 
Let $\xi : F^\times \backslash \A_F^\times \to \C^\times$ 
be a character of finite order; if ${\sf w}$ is even, then take $\xi$ such that 
$L((1- 3{\sf w})/2, \pi \otimes\omega_{\pi} \xi) \neq 0.$ Suppose $\tfrac12+m$ is critical for 
$L_f(s, \Sym^3(\pi) \otimes \xi)$ and $L(s, \pi \otimes\omega_{\pi} \xi).$ Then we have: 
$$
L_f(\tfrac12 + m, \, \Sym^3(\pi) \otimes \xi) 
\ \sim \ 
p^{\epsilon_+}(\Sym^2(\pi)) \ 
\frac{p^{-\epsilon_m \epsilon_\xi}(\pi)}{p^{\epsilon_m \epsilon_\xi}(\pi)} \ 
\G(\xi)^2 \ 
\frac{p_\infty^{\epsilon_+, \, \epsilon_-}(\Sym^2(\mu), \mu+m)}
{p_\infty^{\epsilon_\xi, \epsilon_\xi}(\mu + m, {\rm det}(\mu))}, 
$$
where, by $\sim$, we mean up to an element of $\Q(\pi,\xi)$; $\epsilon_+$ is the sign which is $+$ everywhere; $\epsilon_- = -\epsilon_+$, and $\epsilon_\xi$ is the signature of $\xi$ as in Sect.\,\ref{sec:algebraic-hecke}. 
Furthermore, the ratio of the left hand side by the right hand 
side is equivariant under $\autc.$ 
\end{thm}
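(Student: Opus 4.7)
The plan is to exploit the Langlands factorization
\[
L_f(s, \Sym^2(\pi) \times \pi \otimes \xi) \ = \ L_f(s, \Sym^3(\pi) \otimes \xi) \cdot L_f(s, \pi \otimes \omega_\pi \xi)
\]
and apply Theorem \ref{thm:main} to the two factors that are \emph{not} the symmetric cube: the $\GL_3 \times \GL_2$ case (so $n = 3$, odd) to the Rankin--Selberg product on the left, and the $\GL_2 \times \GL_1$ case (so $n = 2$, even) to the Hecke $L$-function on the right.  Solving for $L_f(\tfrac12 + m, \Sym^3(\pi) \otimes \xi)$ and dividing then expresses the symmetric cube $L$-value, up to $\Q(\pi,\xi)^\times$, as a ratio of the two expressions produced by Theorem \ref{thm:main}.

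First I would verify the hypotheses of Theorem \ref{thm:main} for both applications.  By Theorem \ref{thm:sym-coh}, one has $\Sym^2(\pi) \in \Coh(G_3, \Sym^2(\mu)^{\sf v})$; twisting by the finite-order character $\xi$ preserves cohomological weights, so $\pi \otimes \xi \in \Coh(G_2, \mu^{\sf v})$ and $\omega_\pi \xi \in \Coh(G_1, \det(\mu)^{\sf v})$.  The two compatibility conditions on $(\Sym^2(\mu), \mu)$ and $(\mu, \det(\mu))$ are the standing hypotheses, and criticality of $\tfrac12 + m$ for the $\GL_3 \times \GL_2$ Rankin--Selberg follows from criticality of both factors together with Theorem \ref{thm:critical-compatible}.

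The substantive computation is the careful bookkeeping of signs, periods, and Gauss sums coming out of the two invocations.  For the numerator (odd $n = 3$) the sign of $\Sym^2(\pi_\infty)$ is canonically $\epsilon_+$ and the rule $\eta_v = (-1)^n \epsilon_v$ forces $\eta = \epsilon_-$, so Theorem \ref{thm:main}(2) yields
\[
p^{\epsilon_+}(\Sym^2(\pi)) \cdot p^{-\epsilon_m}(\pi \otimes \xi) \cdot \G(\omega_\pi \xi^2) \cdot p_\infty^{\epsilon_+,\epsilon_-}(\Sym^2(\mu), \mu + m).
\]
For the denominator (even $n = 2$) the sign $\epsilon$ is free, and choosing $\epsilon = \epsilon_\xi$ so that $\eta = \epsilon_\xi$ matches the $\GL_1$ data, Theorem \ref{thm:main}(1) gives
\[
p^{\epsilon_m \epsilon_\xi}(\pi) \cdot p^{\epsilon_\xi}(\omega_\pi \xi) \cdot \G(\omega_\pi \xi) \cdot p_\infty^{\epsilon_\xi,\epsilon_\xi}(\mu + m, \det(\mu)).
\]
Taking the ratio and using the standard period-twisting relation $p^{\delta}(\pi \otimes \xi) \sim \G(\xi) \, p^{\delta \epsilon_\xi}(\pi)$ for $\GL_2$, together with the fact that the $\GL_1$ period $p^{\epsilon_\xi}(\omega_\pi \xi)$ is $\G(\omega_\pi \xi)$ times an element of $\Q(\pi,\xi)^\times$, one checks that all extraneous Gauss sums telescope down to $\G(\xi)^2$, leaving the ratio of periods $p^{-\epsilon_m \epsilon_\xi}(\pi)/p^{\epsilon_m \epsilon_\xi}(\pi)$ and the ratio of the $p_\infty$'s as asserted.

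Two subsidiary matters remain.  The denominator $L_f(\tfrac12 + m, \pi \otimes \omega_\pi \xi)$ must be nonzero for the ratio to be meaningful; away from the centre of symmetry this is Jacquet--Shalika, and at $m = -3{\sf w}/2$ this is exactly where Rohrlich's nonvanishing theorem \cite{rohrlich} is used, via the hypothesis that $\xi$ be chosen with $L((1-3{\sf w})/2, \pi \otimes \omega_\pi \xi) \neq 0$.  The $\autc$-equivariance of the final identity is inherited term by term from the $\autc$-equivariant form of Theorem \ref{thm:main} applied to both factors, combined with the $\autc$-equivariance of the period-twisting relation and of Gauss sum identities.  The main obstacle I anticipate is precisely the clean tracking of signs at each real place and the cancellation of Gauss sums arising from period twisting: each step is standard in this circle of ideas, but a sign slip would propagate through and corrupt the final formula, so this bookkeeping must be executed with some care.
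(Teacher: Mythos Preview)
Your approach is exactly the paper's: factorize $L_f(s,\Sym^2(\pi)\times\pi\otimes\xi)$, apply Theorem~\ref{thm:main} with $n=3$ to the left-hand side and with $n=2$ to $L_f(s,\pi\otimes\omega_\pi\xi)$, then divide. The sign analysis and period-twisting relation you invoke are the same ingredients the paper uses in \S\ref{sec:proof-thm-sym}.

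Two points of bookkeeping, however, need correction. First, in the $n=2$ application the sign $\epsilon$ is \emph{not} free: by \S\ref{sec:signs}, since $n-1=1$ is odd, $\eta$ is determined by the central character of $\Sigma=\omega_\pi\xi$, namely $\eta_v=\omega_{\pi_v}(-1)\xi_v(-1)(-1)^{{\sf w}(\det(\mu))/2}=\xi_v(-1)$, and then $\epsilon_v=(-1)^n\eta_v=\eta_v$. You arrive at $\epsilon=\eta=\epsilon_\xi$ anyway, so this is cosmetic.

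Second, and more seriously, your claim that the $\GL_1$ period satisfies $p^{\epsilon_\xi}(\omega_\pi\xi)\sim\G(\omega_\pi\xi)$ is wrong in this paper's normalization: as recorded in \S\ref{sec:explicate-gl2-gl1}, one has $p^\eta(\chi)\sim 1$ for any $\chi$ on $\GL_1$. If you actually carry your formula through, the Gau\ss\ sums in the quotient become
\[
\frac{\G(\xi)\,\G(\omega_\pi\xi^2)}{\G(\omega_\pi\xi)^2}\ \sim\ \frac{\G(\xi)}{\G(\omega_\pi)},
\]
not $\G(\xi)^2$; so the telescoping you assert fails. With the correct $p^{\epsilon_\xi}(\omega_\pi\xi)\sim 1$ the denominator contributes only a single $\G(\omega_\pi\xi)\sim\G(\omega_\pi)\G(\xi)$, and then
\[
\frac{\G(\xi)\,\G(\omega_\pi)\G(\xi)^2}{\G(\omega_\pi)\G(\xi)}=\G(\xi)^2
\]
as required. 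Fix this and your argument is complete.
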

The proof of the above theorem is given in Sect.\,\ref{sec:proof-thm-sym}. Moreover, 
exactly as in \cite[Sect.\,5]{raghuram-imrn}, we can get analogous results  for higher odd symmetric power $L$-functions. For fifth and seventh symmetric power $L$-functions we would get partial results, and assuming Langlands's functoriality, we would get conditional results for all odd symmetric power $L$-functions. We omit the details as any interested reader can proceed as in Sect.\,\ref{sec:proof-thm-sym}.

\bigskip

\noindent
{\Small
{\it Note to the reader:} Although we have tried to make this article self-contained, anyone who wishes to verify details, will need to keep copies of  \cite{raghuram-imrn}, \cite{raghuram-shahidi-aim} and 
\cite{raghuram-shahidi-imrn} by his/her side.}

\smallskip

\noindent
{\Small
{\it Acknowledgements:} It is a pleasure to thank Michael Harris and Fabian Januszewski for their comments on a preliminary version of this article. I thank the referee for pointing out a piquant phenomenon concerning critical points 
that manifests itself when the base field has a complex place; in an earlier version of this manuscript this subtle issue was glossed over, but it is now addressed in \ref{sec:subtlety}.  
}

\bigskip

\section{\bf Arithmetic properties of Rankin--Selberg $L$-functions}

\subsection{\bf Some notation and preliminaries}

\subsubsection{\bf The base field} 
Let $F$ be a number field of degree $d=[F:\Q]$ with ring of integers $\O$. For any place $v$ we write $F_v$ for the topological completion of $F$ at $v$. Let $S_\infty$ be the set of archimedean places of $F$. Let $S_\infty := S_r \cup S_c$, where $S_r$ (resp., $S_c$) is the set of real (resp., complex) places. Let $\cE_F = {\rm Hom}(F,\C)$ be the set of all embeddings of $F$ as a field into $\C$. There is a canonical surjective map $\cE_F \to S_\infty$, which is a bijection on the real embeddings and real places, and identifies a pair of complex conjugate embeddings $\{\iota_v, \bar{\iota}_v\}$ with the  complex place $v$. For each $v \in S_r$, we fix an isomorphism 
$F_v \cong \R$ which is canonical. Similarly for $v \in S_c$, we fix $F_v \cong \C$ given by (say) $\iota_v$; this choice is not canonical. 
Let $r_1 = |S_r|$ and $r_2 = |S_c|$; hence $d = r_1 + 2r_2.$ 
If $v \notin S_{\infty}$, we let $\O_v$ be the ring of integers of $F_v$, and $\wp_v$ it's  unique maximal ideal. 
Moreover, $\A_F$ denotes the ring of ad\`eles of $F$ and $\A_{F,f}$ its finite part. The group of id\`eles of $F$ will be denoted $\A_F^\times$ and similarly, $\A_{F,f}^\times$ is the group of finite id\`eles. We will drop the subscript $F$ when talking about 
$\Q$. Hence, $\A$ is $\A_\Q$, etc. 
We use the local and global normalized absolute values and denote each of them by $|\cdot|$. 
Further, $\mathfrak{D}_F$ stands for the absolute different of $F$, i.e., $\mathfrak{D}_F^{-1} = \{x \in F : Tr_{F/\Q}(x \O) \subset \Z\}$.

\subsubsection{\bf The groups $G_n \supset B_n \supset T_n \supset Z_n \supset S_n$} 
The algebraic group ${\rm GL}_n/F$ will be denoted as $\underline{G}_n$, and we put 
$G_n = R_{F/\Q}(\underline{G}_n)$. An $F$-group will be denoted by 
an underline and the corresponding $\Q$-group via Weil restriction of scalars will be denoted without the underline; hence for any $\Q$-algebra $A$ the 
group of $A$-points of $G_n$ is $G_n(A) = \underline{G}_n(A \otimes_\Q F)$. 
Let $\underline{B}_n = \underline{T}_n \underline{U}_n$ stand for the standard Borel subgroup of $\underline{G}_n$ of all upper triangular matrices, where $\underline{U}_n$ is the unipotent radical of $\underline{B}_n$, and $\underline{T}_n$ the diagonal torus. The center of $\underline{G}_n$ will be denoted by $\underline{Z}_n$. These groups define the corresponding $\Q$-groups $G_n \supset B_n = T_nU_n \supset Z_n$. Observe that $Z_n$ is not $\Q$-split, and we let $S_n$ be the maximal $\Q$-split torus in $Z_n$; we have 
$S_n \cong {\mathbb G}_m$ over $\Q.$

\subsubsection{\bf The groups at infinity}
\label{sec:group-infinity}
Note that 
$$
G_{n,\infty} := G_n(\R) = \underline{G}_n(F \otimes_\Q \R) = \prod_{v \in S_{\infty}} \GL_n(F_v) \cong \prod_{v \in S_r} \GL_n(\R) \times \prod_{v \in S_c} \GL_n(\C).
$$ 
Let $\K$ denote either $\R$ or $\C$. We have $Z_n(\R) =  \prod_{v \in S_r} \R^\times \times \prod_{v \in S_c} \C^\times,$ where each copy of $\K^\times$ consists of nonzero scalar matrices in the 
corresponding copy of $\GL_n(\K).$  
The subgroup $S_n(\R)$ of $Z_n(\R)$ consists of $\R^\times$ diagonally embedded in $\prod_{v \in S_r} \R^\times \times \prod_{v \in S_c} \C^\times.$
Let $C_{n,\infty} =  \prod_{v \in S_r} \Orth(n) \times \prod_{v \in S_c} \Unit(n)$ be the maximal compact subgroup of $G_n(\R)$, and let 
\begin{eqnarray*}
K_{n,\infty} & =  & S_n({\mathbb R}) C_{n,\infty} \\
& \cong & \R^\times  \left(\prod_{v \in S_r} \Orth(n) \times \prod_{v \in S_c} \Unit(n)\right) \\
& \cong &  \R_+^\times  \left(\prod_{v \in S_r} \Orth(n) \times \prod_{v \in S_c} \Unit(n)\right) \ \cong \ S_n({\mathbb R})^0C_{n,\infty}.
\end{eqnarray*}
Let $K_{n,\infty}^0$ be the topological connected component of $K_{n,\infty}$. Hence 
$$
K_{n,\infty}^0 =  S_n({\mathbb R})^0 C_{n,\infty}^0 \ \cong \  \R_+^\times \left(\prod_{v \in S_r} {\rm SO}(n) \times \prod_{v \in S_c} \Unit(n)\right). 
$$
 For any topological group $\mathfrak{G}$ we will let $\pi_0(\mathfrak{G}) := \mathfrak{G}/\mathfrak{G}^0$ stand for the group of connected components. 
Inclusion of connected components induces the equality $\pi_0(K_{n,\infty}) = \pi_0(G_n(\R))$. Observe also that 
$\pi_0(K_{n,\infty}) \cong  \prod_{v \in S_r} \{\pm 1 \} \cong: \prod_{v \in S_r} \{\pm \}$. The matrix 
$\delta_n = {\rm diag}(-1,1,\dots,1)$ represents the nontrivial element in $\Orth(n)/\SO(n)$, and if $n$ is odd, the scalar matrix $-1_n$ 
also represents this nontrivial element. We identify $\pi_0(G_n(\R))$ inside $G_n(\R)$ via the $\delta_n$'s.  
The character group of $\pi_0(K_{n,\infty})$ is denoted $\widehat{\pi_0(K_{n,\infty})}$.

\subsubsection{\bf Lie algebras}
\label{sec:lie-algebras}
The general notational principle we follow is that for a real Lie group $G$, we denote its Lie algebra by $\g^0$ and the complexified Lie algebra by $\g,$ i.e., 
$\g = \g^0 \otimes_\R \C.$ Hence, if $G$ is the Lie group $\GL_n(\R)$ then $\g^0 = \gl_n(\R)$ and $\g = \gl_n(\C)$. 
On the other hand, if $G$ stands for the real Lie group $\GL_n(\C)$ then $\g^0 = \gl_n(\C)$ as a $\R$-Lie algebra,  
and $\g = \gl_n(\C) \otimes_\R \C.$  With this notational scheme, we have $\g_n$, 
$\mathfrak{b}_n$, $\mathfrak{t}_n$ and $\k_n$ denoting the complexifed Lie algebras of $G_n(\R)$, 
$B_n(\R)$, $T_n(\R)$ and $K_{n,\infty}^0$ respectively. For example, 
$\g_n = \prod_{v \in S_r} \gl_n(\C)  \times \prod_{v \in S_c}( \gl_n(\C) \otimes_\R \C).$

\subsubsection{\bf Finite-dimensional representations} 
\label{sec:fin-dim-repn}
Consider $T_n(\R) = \underline{T}_n(F \otimes {\mathbb R}) \cong \prod_{v \in S_\infty} \underline{T}_n(F_v)$. 
We let $X^*(T_n)$ stand for the group of all algebraic characters of $T_n$, and let 
$X^+(T_n)$ stand for all those characters in $X^*(T_n)$ 
which are dominant with respect to $B_n$. A weight $\mu \in 
X^+(T_n)$ may be described as follows: 
$\mu \ = \ (\mu^\iota)_{\iota \in \cE_F},$ where 
\begin{itemize}
\item 
For $v \in S_r$, we have $\mu^v = (\mu^v_1,\dots, \mu^v_n)$, $\mu^v_i \in {\mathbb Z}$, 
$\mu^v_1 \geq \cdots \geq \mu^v_n$, 
and the character $\mu^v$ sends $t = {\rm diag}(t_1,\dots,t_n) \in \underline{T}_n(F_v)$ to 
$\prod_i t_i^{\mu^v_i}.$ 
\item If $v \in S_c$ then $\mu^v$ is the pair $(\mu^{\iota_v}, \mu^{\bar{\iota}_v})$, with $\mu^{\iota_v} = (\mu^{\iota_v}_1,\dots, \mu^{\iota_v}_n)$, 
 $\mu^{\iota_v}_i \in {\mathbb Z}$, $\mu^{\iota_v}_1 \geq \cdots \geq \mu^{\iota_v}_n$; likewise
$\mu^{\bar{\iota}_v} = (\mu^{\bar{\iota}_v}_1,\dots, \mu^{\bar{\iota}_v}_n)$ and  
$\mu^{\bar{\iota}_v}_1 \geq \cdots \geq \mu^{\bar{\iota}_v}_n;$  
the character $\mu^v$ is given by sending $t = {\rm diag}(z_1,\dots,z_n) \in \underline{T}_n(F_v)$ to 
$\prod_{i=1}^n z_i^{\mu^{\bar{\iota}_v}_i} \bar{z}_i^{\mu^{\bar{\iota}_v}_i},$ where $\bar{z}_i$ is the complex  conjugate of $z_i$.  
\end{itemize}
We also write $\mu = (\mu^v)_{v \in S_{\infty}},$ where $\mu^v = (\mu^{\iota_v}, \mu^{\bar{\iota}_v})$ for $v \in S_c.$ 

\smallskip

For $\mu \in X^+(T_n)$, we define a finite-dimensional complex 
representation $(\rho_{\mu}, \M_{\mu, \C})$ of $G_n(\R)$ as follows.
For $v \in S_r$, let $(\rho_{\mu^v}, \M_{\mu^v, \C})$ 
be the irreducible complex representation of $G_n(F_v) = \GL_n({\mathbb R})$ with 
highest weight $\mu_v$.
For $v \in S_c$, let $(\rho_{\mu^v}, \M_{\mu^v, \C})$ be the complex representation of the real algebraic group 
$G(F_v) = \GL_n({\mathbb C})$ defined as 
$\rho_{\mu^v}(g) = \rho_{\mu^{\iota_v}}(g) \otimes \rho_{\mu^{\bar{\iota}_v}}(\overline{g});$ here
$\rho_{\mu^{\iota_v}}$ (resp., $\rho_{\mu^{\bar{\iota}_v}}$)
is the irreducible representation of the complex group $\GL_n(\C)$ with highest weight 
$\mu^{\iota_v}$ (resp., $\mu^{\bar{\iota}_v}$). 
Now we let $\rho_{\mu} = \otimes_{v \in S_{\infty}} \rho_{\mu^v}.$

\smallskip

Note that $\autc$ acts on $X^*(T_n)$ as follows: if $\sigma \in \autc$ and $\mu \in X^*(T_n)$ then 
${}^\sigma\!\mu \in X^*(T_n)$ is defined as: 
${}^\sigma\!\mu \ = \ ({}^\sigma\!\mu^\iota)_{\iota \in \cE_F}$ where 
${}^\sigma\!\mu^\iota := \mu^{\sigma^{-1} \circ \iota}.$ Define the rationality field $\Q(\mu)$ as the fixed field in $\C$ under all those automorphisms $\sigma$ which fix $\mu.$ 
Consider the representation $(\rho_{{}^\sigma\!\mu}, \M_{{}^\sigma\!\mu, \C})$ of $G_n(\R)$ of highest weight 
${}^\sigma\!\mu.$ 
Consider $\C$ as a $(\C,\C)$-bimodule, where the left module structure is via $\sigma$ and the right module structure is the usual multiplication in $\C$; denote this bimodule as ${}_\sigma\C$. Then the canonical map $t : \M \to \M \otimes_{\C} {}_\sigma\C$ defined by $t(w) = w \otimes 1$ is a $\sigma$-linear isomorphism.  Take $\M = \M_{\mu, \C}$ and denote by $({}^\sigma\!\rho_{\mu}, {}^\sigma\!\M_{\mu, \C})$ the representation of $G_n(\R)$ where 
a $g \in G_n(\R)$ acts on 
${}^\sigma\!\M_{\mu, \C} = \M_{\mu, \C} \otimes {}_\sigma\C$ by 
${}^\sigma\!\rho_{\mu}(g) = t \circ \rho_\mu(g) \circ t^{-1}.$ Then ${}^\sigma\!\rho_{\mu} \simeq \rho_{{}^\sigma\!\mu}$ 
as a representation of $G_n(F)$ (see \cite[Lem.\,7.1]{grobner-raghuram-ijnt}) and this representation is defined over 
$\Q(\mu)$ which may be seen exactly as in Waldspurger \cite[Prop.\,I.3]{waldy}.  For any extension $E/\Q(\mu)$ we will let $\M_{\mu,E} = \M_{\mu, \Q(\mu)} \otimes_{\Q(\mu)} E$ on which $G_n(F)$ acts via its action on the first factor.

\subsubsection{\bf Automorphic representations}
Following Borel--Jacquet \cite[\S4.6]{borel-jacquet}, we say an irreducible 
representation of $G_n(\A) = \GL_n(\A_F)$ is automorphic if it is isomorphic to an 
irreducible subquotient of the representation of $G_n(\A)$ on its
space of automorphic forms. We say an automorphic representation is cuspidal 
if it is a subrepresentation of the representation of $G_n(\A)$ on 
the space of cusp forms $\mathcal{A}_{\rm cusp}(G_n(\Q) \backslash G_n(\A)) = \mathcal{A}_{\rm cusp}(\GL_n(F) \backslash \GL_n(\A_F))$. 
The subspace of cusp forms realizing a cuspidal automorphic representation $\pi$ will be denoted $V_{\pi}$. 
For an automorphic representation $\pi$ of $G_n(\A)$, we have
$\pi = \pi_{\infty} \otimes \pi_f$, where $\pi_{\infty}$ is a representation of $G_n({\mathbb R})$,
and $\pi_f = \otimes_{v \notin S_{\infty}} \pi_v$
is a representation of $G_n({\mathbb A}_f)$. The central character of  $\pi$ will be denoted $\omega_{\pi}$.

\subsubsection{\bf Algebraic Hecke characters} 
\label{sec:algebraic-hecke}
(References: Deligne \cite[\S 5]{deligne-sga4.5}, Schappacher \cite[Chapter 0]{schappacher}, Waldspurger \cite[I.5]{waldy} or 
Weil  \cite{weil-a0}.)
A continuous homomorphism $\omega : F^\times\backslash \A_F^\times \to \C^\times$ is called a Hecke character of $F.$ 
An element $\alpha = \sum_{\iota \in \cE_F} a_\iota \iota$, with $a_\iota \in \Z$ is called an {\it infinity type}. 
A Hecke character $\omega$ is called an algebraic Hecke character of infinity type $\alpha$ if 
\begin{itemize}
\item for $v \in S_r$, $\omega_v(x) = x^{a_{\iota_v}}$ for all $x \in \R^\times_+$; 
\item for $v \in S_c$, $\omega_v(z) = z^{a_{\iota_v}} \bar{z}^{a_{\bar{\iota}_v}}$ for all $z \in \C^\times.$
\end{itemize}
Weil  gave the appellation `characters of type (A$_0$)' for such algebraic Hecke characters. 
The existence of an algebraic Hecke character $\omega$ with infinity type $\alpha$ implies the following purity constraint on $\alpha$:
\begin{enumerate}
\item if $S_r$ is not empty, i.e., if $F$ has at least one real place, then the map from $\cE_F \to \Z$ given by $\iota \mapsto a_\iota$ is constant; in this case, let ${\sf w}(\omega) := a_\iota$ for any $\iota$. 
\item if $S_r$ is empty, i.e., if $F$ is a totally imaginary field, then the map from $\cE_F \times {\rm Aut}(\C) \to \Z$ given by 
$(\iota, \sigma) \mapsto a_{\sigma\iota} + a_{\sigma\bar\iota}$ is  constant; 
in this case, let ${\sf w}(\omega) := a_\iota + a_{\bar\iota}$ for any $\iota$
\end{enumerate}
In either case, we call ${\sf w}(\omega)$ the {\it purity weight of $\omega$}. 

\smallskip

Suppose that $F$ has at least one real place, then we define the {\it signature} $\epsilon_\omega$ of an algebraic Hecke character $\omega$ as follows: 
By the purity constraint, the character $\omega^\circ := \omega |\ |^{-{\sf w}(\omega)}$ is a character of finite order.  For $v \in S_r$, define 
$$
\epsilon_{\omega_v} = (-1)^{{\sf w}(\omega)} \omega^\circ_v(-1).
$$
Now put $\epsilon_\omega = (\epsilon_{\omega_v})_{v \in S_r}.$ The signature is an $r_1$-tuple of signs indexed by real embeddings of $F$.

\smallskip

For each finite place $v$, and any smooth character $\chi_v : F_v^\times \to \C^\times$, define the rationality field 
$\Q(\chi_v)$ of $\chi_v$ as the field obtained by adjoining 
the values of $\omega_v$ to $\Q$. For an algebraic Hecke character $\omega$, we define its {\it rationality field} 
$\Q(\omega)$ as the compositum of the fields $\Q(\omega_v)$ for 
all finite places $v$ that are unramified for $\omega.$ It is a standard fact that $\Q(\omega)$ is a number field, and as Weil notes in \cite{weil-a0}, the field $\Q(\omega)$ need not contain the field $F$.

\subsubsection{\bf Additive characters and Gauss sums}
\label{sec:gauss}
We fix an additive character $\psi_{\Q}$ of ${\mathbb Q} \backslash {\mathbb A}$, as in Tate's thesis, namely,
$\psi_{\Q}(x) = e^{2\pi i \lambda(x)}$ with the $\lambda$ as defined in \cite[Sect.\,2.2]{tate-thesis}. 
Next, we define a character $\psi$ of $F\backslash \A_F$ by composing $\psi_{\Q}$ with the
trace map from $F$ to $\Q$: $\psi = \psi_{\Q} \circ Tr_{F/\Q}$. If $\mathfrak{D}_F = \prod_{\wp} \wp^{r_{\wp}}$ with the product running over all prime ideals $\wp$, and $\psi = \otimes_v \psi_v$, then the conductor of 
$\psi_{v}$ at a finite place $v$ is $\wp_v^{-r_{\wp}}$, i.e., $\psi_{v}$ is trivial on $\wp_v^{-r_{\wp_v}}$ and nontrivial 
on $\wp_v^{-r_{\wp_v}-1}.$ For any Hecke character $\chi$ of $F$, we define the Gau\ss ~sum $\G(\chi_f)$ 
of $\chi_f$ exactly as in \cite[Sect.\,2]{raghuram-imrn}; this depends on choice of $\psi$.

\subsection{\bf Rankin--Selberg $L$-functions: analytic aspects}
\label{sec:rankin-selberg}

This subsection is a brief summary of \cite[Sect.\,3.1]{raghuram-imrn}; see references therein for all the assertions made below. 

\subsubsection{\bf Rankin--Selberg zeta integrals for $G_n \times G_{n-1}$}
Let $\Pi$ (resp., $\Sigma$) be a cuspidal automorphic representation of $G_n(\A)$
(resp., $G_{n-1}(\A)$). Let $\phi \in V_{\Pi}$ and $\phi' \in V_{\Sigma}$ be cusp forms. Consider 
$$
I(s, \phi, \phi') = \int_{G_{n-1}({\mathbb Q})\backslash G_{n-1}({\mathbb A})}
\phi(\iota(g))\phi'(g)|{\rm det}(g)|^{s - 1/2}\, dg.
$$
The above integral converges for all 
$s \in {\mathbb C}$. Suppose that $w \in \Whit(\Pi,\psi)$ and $w' \in \Whit(\Sigma, \psi^{-1})$ are global Whittaker functions corresponding to $\phi$ and $\phi'$, respectively. We have 
$$
I(s,\phi,\phi') = \Psi(s,w,w') := \int_{U_{n-1}({\mathbb A})\backslash G_{n-1}({\mathbb A})}
w(\iota(g))w'(g)|{\rm det}(g)|^{s - 1/2}\, dg.
$$
The integral $\Psi(s,w,w')$ converges for ${\rm Re}(s) \gg 0$. 
Let $w = \otimes w_v$ and $w' = \otimes w'_v$, then $\Psi(s,w,w') := \otimes \Psi_v(s,w_v,w'_v)$ for 
${\rm Re}(s) \gg 0$, where the local 
integral $\Psi_v$ is given by a similar formula. Recall that the local integral $\Psi_v(s,w_v,w'_v)$ converges 
for ${\rm Re}(s) \gg 0$ and has a meromorphic continuation to all of ${\mathbb C}$. 
We will choose the local Whittaker functions carefully so that
the integral $I(\tfrac12,\phi,\phi')$ computes the special value $L_f(\tfrac12, \Pi\times\Sigma)$ up to quantities which are 
${\rm Aut}({\mathbb C})$-equivariant.

\subsubsection{\bf Action of ${\rm Aut}({\mathbb C})$ on Whittaker models} 
Any $\sigma \in \autc$ gives an element 
$t_\sigma \in \A_f^\times.$ Given $w \in \Whit(\Pi_f, \psi_f)$, define ${}^\sigma\!w \in \Whit({}^\sigma\Pi_f, \psi_f)$ by 
${}^\sigma\!w(g_f) = \sigma(w(t_{\sigma,n}g_f)),$ $g_f \in G(\A_f),$
where $t_{\sigma,n} = {\rm diag}(t_\sigma^{-(n-1)}, t_\sigma^{-(n-2)}, \dots, t_\sigma^{-1}, 1).$ For more details, see 
\cite[Sect.\,3.1]{raghuram-imrn} and \cite[Sect.\,3.2]{raghuram-shahidi-imrn}.

\subsubsection{\bf Normalized new vectors}
\label{sec:newvectors}
Just for this paragraph, let $F$ be a non-archimedean local field, $\mathcal{O}_F$ the 
ring of integers of $F$, and $\mathcal{P}_F$ the maximal ideal of $\mathcal{O}_F$. 
Let $(\pi,V)$ be an irreducible admissible generic representation of ${\rm GL}_n(F)$. Let $K_n(m)$ be the 
`mirahoric subgroup' of ${\rm GL}_n(\mathcal{O}_F)$ consisting of all matrices whose last row 
is congruent to $(0,\dots,0,*)$ modulo $\mathcal{P}_F^m$.  
Let $V_m := \{v \in V \ | \ \pi(k)v = 
\omega_{\pi}(k_{n,n})v, \forall k \in K_n(m) \}$. Let $\mathfrak{f}(\pi)$ be the least non-negative integer $m$ for which 
$V_m \neq (0)$. One knows that $\mathfrak{f}(\pi)$ is the conductor of $\pi$ and that 
$V_{\mathfrak{f}(\pi)}$ is one-dimensional. Any vector in $V_{\mathfrak{f}(\pi)}$ is called a {\it new vector} of 
$\pi$. 
Fix a nontrivial additive character $\psi$ of $F$, and assume that $V = W(\pi,\psi)$ is the Whittaker model for $\pi$.
If $\pi$ is unramified, i.e., $\mathfrak{f}(\pi) = 0$, 
then we fix a specific new vector called the {\it spherical vector}, denoted $w_{\pi}^{\rm sp}$, and 
normalized as $w_{\pi}^{\rm sp}(1_n) = 1.$ More generally, for 
any $\pi$, amongst all new vectors, there is a distinguished vector, called the {\it essential vector}, denoted 
$w_{\pi}^{\rm ess}$, and characterized by the property that for any irreducible unramified generic representation $\rho$ of ${\rm GL}_{n-1}(F)$ one has 
$$
\Psi(s, w_{\pi}^{\rm ess}, w_{\rho}^{\rm sp}) = 
\int_{U_{n-1}(F)\backslash G_{n-1}(F)}
w_{\pi}^{\rm ess}(\iota(g))w_{\rho}^{\rm sp}(g)|{\rm det}(g)|^{s - 1/2}\, dg = L(s, \pi \times \rho).
$$
If $\pi$ is unramified then $w_{\pi}^{\rm ess} = w_{\pi}^{\rm sp}$. In general, given $\pi$ 
there exists $t_{\pi} \in T_n(F)$ such that a new vector for $\pi$ is nonvanishing on $t_{\pi}$. Note that 
necessarily $t_{\pi} \in T_n^+(F)$, i.e., if $t_{\pi} = {\rm diag}(t_1,t_2,\dots,t_n)$ then $t_it_{i+1}^{-1} \in \mathcal{O}_F$ for all $1 \leq i \leq n-1$. 
We let $w_{\pi}^0$ be the new vector normalized such that 
$w_{\pi}^0(t_{\pi}) = 1$. If $\pi$ is unramified then we may and will take $t_{\pi} = 1_n$, and so
$w_{\pi}^0 = w_{\pi}^{\rm ess} = w_{\pi}^{\rm sp}$. For any $\sigma \in {\rm Aut}({\mathbb C})$ we may and will take 
$t_{\pi^{\sigma}} = t_{\pi}$. Then ${}^{\sigma}\!w_{\pi}^0 = w_{\pi^{\sigma}}^0$. Define $c_{\pi} \in {\mathbb C}^\times$ by $w_{\pi}^0 = c_{\pi}w_{\pi}^{\rm ess}$, i.e., 
$c_{\pi} = w_{\pi}^{\rm ess}(t_{\pi})^{-1}.$ 
For more details, see \cite[Sect.\,3.1.3]{raghuram-imrn}.

\subsubsection{\bf Choice of Whittaker vectors and cusp forms}
\label{sec:whittaker}
We now go back to global notation and choose global Whittaker vectors 
$w_{\Pi} = \otimes_v w_{\Pi,v} \in \Whit(\Pi,\psi)$ and $w_{\Sigma} = \otimes_v w_{\Sigma,v} \in \Whit(\Sigma, \psi^{-1})$ as follows. 
Let $S_{\Sigma}$ be the set of finite places $v$ where $\Sigma_v$ is unramified.  

\smallskip

\begin{enumerate}
\item If $v \notin S_{\Sigma}\cup S_\infty$,  we let $w_{\Pi, v} = w_{\Pi_v}^0$, and 
$w_{\Sigma, v} = w_{\Sigma_v}^{\rm sp}$. 

\smallskip

\item If $v \in S_{\Sigma}$, we let $w_{\Sigma, v} = w_{\Sigma_v}^0$, and let $w_{\Pi, v}$ be the unique Whittaker 
function whose restriction to $\GL_{n-1}(F_v)$ is supported on 
$\underline{U}_{n-1}(F_v)t_{\Sigma_v}K_{n-1}(\mathfrak{f}(\Sigma_v))$, and on this double coset it is given by 
$w_{\Pi, v}(ut_{\Sigma_v}k) = \psi(u)\omega_{\Sigma_v}^{-1}(k_{n-1,n-1})$, 
for all $u \in \underline{U}_{n-1}(F_v)$ and for all $k \in K_{n-1}(\mathfrak{f}(\Sigma_v))$. 

\smallskip

\item If $v \in S_\infty$, we let $w_{\Pi, v}$ and $w_{\Sigma, v}$ be arbitrary nonzero vectors. 
(Later, these will be cohomological vectors.) 
\end{enumerate}

Let $w_{\Pi_f} = \otimes_{v \notin S_\infty} w_{\Pi,v}$, $w_{\Pi_\infty} = \otimes_{v \in S_\infty} w_{\Pi,v}$, 
and $w_{\Pi} = w_{\Pi_{\infty}} \otimes w_{\Pi_f}$. 
Similarly, let $w_{\Sigma_f}$, $w_{\Sigma_{\infty}}$ and  
$w_{\Sigma}$. 
Let $\phi_{\Pi}$ (resp., $\phi_{\Sigma}$) be the cusp form corresponding to $w_{\Pi}$ (resp., $w_{\Sigma}$).

\subsubsection{\bf Integral representation of the central $L$-value}
For $\Re(s) \gg 0$, define $\Psi_{\infty}(s, w_{\Pi_{\infty}}, w_{\Sigma_{\infty}})$ to be  
$\prod_{v \in S_\infty} \Psi_v(s, w_{\Pi,v}, w_{\Sigma, v})$; 
this admits a meromorphic continuation to all of $\C.$  One identifies local $L$-factors $L_v(s, \Pi_v \times \Sigma_v)$, and proves that 
$\Psi_v(s, w_{\Pi,v}, w_{\Sigma, v})/L_v(s, \Pi_v \times \Sigma_v)$ is entire. Later we will be taking $s = 1/2$ to be a critical point, which says that $L_v(\tfrac12, \Pi_v \times \Sigma_v)$ is regular for all $v \in S_\infty$; hence, criticality of 
$s = 1/2$ gives that $\Psi_{\infty}(\tfrac12, w_{\Pi_{\infty}}, w_{\Sigma_{\infty}})$ is finite.

\begin{prop}
\label{prop:rankin-selberg}
We have 
{\small
$$
I(\tfrac12, \phi_{\Pi}, \phi_{\Sigma}) \ = \  
\frac{\Psi_{\infty}(\tfrac12, w_{\Pi_{\infty}}, w_{\Sigma_{\infty}})\, {\rm vol}(\Sigma)\, 
\prod_{v \notin S_{\Sigma} \cup \{\infty\}} c_{\Pi_v}}
{\prod_{v \in S_{\Sigma}} L(\tfrac12, \Pi_v \times \Sigma_v)} 
L_f(\tfrac12, \Pi \times \Sigma),
$$}
where ${\rm vol}(\Sigma) = \prod_{v \in S_{\Sigma}} {\rm vol}(K_{n-1}(\mathfrak{f}(\Sigma_v)) \in {\mathbb Q}^*$.
\end{prop}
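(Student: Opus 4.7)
The plan is to evaluate $I(\tfrac12, \phi_\Pi, \phi_\Sigma)$ by passing to its Whittaker expansion, Euler-factoring into local zeta integrals, and computing each local factor under the carefully chosen vectors of \ref{sec:whittaker}.

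First I would invoke the Jacquet--Piatetski-Shapiro--Shalika unfolding recalled above to rewrite $I(s, \phi_\Pi, \phi_\Sigma) = \Psi(s, w_\Pi, w_\Sigma)$ for $\Re(s) \gg 0$. Since $w_\Pi$ and $w_\Sigma$ are pure tensors and the quotient measure on $U_{n-1}(\A) \backslash G_{n-1}(\A)$ factors over places, Fubini yields the Euler product $\Psi(s, w_\Pi, w_\Sigma) = \prod_v \Psi_v(s, w_{\Pi,v}, w_{\Sigma,v})$ in the half-plane of convergence. Each local integral continues meromorphically to all of $\C$; the criticality hypothesis at $s = \tfrac12$ forces every archimedean $L_v(\tfrac12, \Pi_v \times \Sigma_v)$ to be regular, so $\Psi_\infty(\tfrac12, w_{\Pi_\infty}, w_{\Sigma_\infty})$ is finite, and the Euler factorization specializes validly at $s = \tfrac12$.

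Next I would evaluate the three types of local factors. At a finite place $v$ where $\Sigma_v$ is unramified, the pair $(w_{\Pi_v}^0, w_{\Sigma_v}^{\rm sp}) = (c_{\Pi_v} w_{\Pi_v}^{\rm ess}, w_{\Sigma_v}^{\rm sp})$ is paired, and the defining property of the essential vector recalled in \ref{sec:newvectors} gives $\Psi_v(\tfrac12, w_{\Pi,v}, w_{\Sigma,v}) = c_{\Pi_v} L(\tfrac12, \Pi_v \times \Sigma_v)$. At a finite place $v$ where $\Sigma_v$ is ramified, $w_{\Sigma,v} = w_{\Sigma_v}^0$ is the normalized new vector and $w_{\Pi,v}$ is supported only on the double coset $U_{n-1}(F_v) t_{\Sigma_v} K_{n-1}(\mathfrak{f}(\Sigma_v))$; unwinding the defining integral over this support, exploiting $|\det k| = 1$ on $K_{n-1}(\mathfrak{f}(\Sigma_v))$, the explicit formula $w_{\Pi,v}(u t_{\Sigma_v} k) = \psi(u) \omega_{\Sigma_v}^{-1}(k_{n-1,n-1})$, and the transformation law $w_{\Sigma_v}^0(t_{\Sigma_v} k) = \omega_{\Sigma_v}(k_{n-1,n-1}) w_{\Sigma_v}^0(t_{\Sigma_v}) = \omega_{\Sigma_v}(k_{n-1,n-1})$, the two occurrences of $\omega_{\Sigma_v}$ cancel and one obtains $\Psi_v(\tfrac12, w_{\Pi,v}, w_{\Sigma,v}) = {\rm vol}(K_{n-1}(\mathfrak{f}(\Sigma_v)))$. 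At archimedean $v$, one leaves $\Psi_v(\tfrac12, w_{\Pi,v}, w_{\Sigma,v})$ intact, contributing $\Psi_\infty(\tfrac12, w_{\Pi_\infty}, w_{\Sigma_\infty})$ to the product.

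Finally, assembling the three contributions and writing $\prod_{v \notin S_\Sigma \cup \{\infty\}} L(\tfrac12, \Pi_v \times \Sigma_v) = L_f(\tfrac12, \Pi \times \Sigma)/\prod_{v \in S_\Sigma} L(\tfrac12, \Pi_v \times \Sigma_v)$ from the Euler decomposition $L_f = \prod_{v < \infty} L_v$ yields the stated identity. The principal obstacle is the ramified-place local computation: one has to verify that the carefully designed support condition on $w_{\Pi,v}$ interlocks exactly with the new-vector transformation law of $w_{\Sigma_v}^0$ so that the two characters annihilate and the integrand reduces to the constant function $1$ on $K_{n-1}(\mathfrak{f}(\Sigma_v))$. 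The unfolding and Euler-product steps are standard Rankin--Selberg machinery, and the archimedean contribution is left untouched and absorbed into $\Psi_\infty$.
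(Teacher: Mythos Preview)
Your argument is correct and is precisely the computation underlying \cite[Prop.\,3.1]{raghuram-imrn}, to which the paper simply refers. The unfolding, Euler factorization, and the three-way local split (essential-vector identity at places where $\Sigma_v$ is unramified, the support computation at places $v\in S_\Sigma$, and the untouched archimedean factor) are exactly the ingredients used there; note that in the paper's text the description of $S_\Sigma$ contains an evident slip, and your reading (that $S_\Sigma$ is the finite set of places where $\Sigma_v$ is ramified) is the intended one, as is clear from the finiteness of the product defining ${\rm vol}(\Sigma)$ and from the choices (1)--(2) in \ref{sec:whittaker}.
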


\begin{proof}
See \cite[Prop.\,3.1]{raghuram-imrn}. 
\end{proof}

The main theorem on critical values of Rankin--Selberg $L$-functions follows by interpreting the above proposition in cohomology.

\subsection{\bf Automorphic cohomology}

\subsubsection{\bf Locally symmetric spaces} (See Harder \cite[1.1]{harder-inventiones}.) 
\label{sec:locally-symmetric}
Let $K_f$ be an open-compact subgroup of $G_n(\A_f) = \GL_n(\A_{F,f})$. Let us write $K_f = \prod_p K_p$ where each $K_p$ is an open compact subgroup of $G_n(\Q_p)$ and for almost all $p$ we have $K_p = \prod_{v | p} \GL_n(\O_v)$. 
Define the double-coset space 
$$
S^{G_n}_{K_f} \ = \ G_n(\Q) \backslash G_n(\A) /K_{n,\infty}^0 K_f 
\ = \ \GL_n(F) \backslash \GL_n(\A_F) /K_{n,\infty}^0 K_f.
$$
For brevity, let $K = K_{n, \infty}^0 K_f$, and define $X = G_n(\A)/K = G_n(\R)/K_{n,\infty}^0 \times G_n(\A_f)/K_f,$ i.e., $X$ is the product of the symmetric space 
$G_n(\R)/K_{n,\infty}^0$ with a totally disconnected space; any connected component of $X$ is of the form 
$X_g = G_n(\R)^0(g_\infty, g_f) K_f/K$ where $g = (g_\infty, g_f) \in G_n(\A)$ with $g_\infty \in \pi_0(G_n(\R)) \subset G_n(\R)$; for the last inclusion see \ref{sec:group-infinity}. The stabilizer of $X_g$ inside $G_n(\Q)$ is 
$
\Gamma_g := \{ \gamma \in G_n(\Q) \ : \gamma \in G_n(\R)^0 \cap g_fK_fg_f^{-1} \}. 
$
Any connected component of $S^{G_n}_{K_f}$ is of the form $\Gamma_g \backslash X_g \cong \Gamma_g \backslash G_n(\R)^0 / K_{n,\infty}^0$. 
However, $\Gamma_g$ does not act freely on $X_g$ since $S_{n,\infty} \subset K_{n,\infty}$. Indeed,  the stabilizer of every point in 
$X_g$ contains a congruence subgroup $\Delta$ of $S_n(\O_F)$; this $\Delta$ is independent of the point in $X_g$, but the congruence conditions on $\Delta$ depend on $K_f$. 
The group $\bar\Gamma_g = \Gamma_g/\Delta$ acts freely on $X_g$ and the quotient $\bar\Gamma_g\backslash X_g$ is a locally symmetric space. 
We will abuse terminology and sometimes refer to $S^{G_n}_{K_f}$ as  a locally symmetric space of $G_n$ with level structure $K_f$.

\subsubsection{\bf Sheaves on locally symmetric spaces, and their cohomology} 
\label{sec:sheaves} 
(Reference: see Harder--Raghuram~\cite{harder-raghuram}.) 
Given a dominant-integral weight $\mu \in X^+(T_n)$ and the associated representation $\M_{\mu,E}$, where 
$E$ is an extension of $\Q(\mu),$  we get a sheaf $\tM_{\mu, E}$ of $E$-vector spaces on $S^{G_n}_{K_f}$ as follows: 
Let $\pi : G_n(\A) /K_{n,\infty}^0 K_f \to S^{G_n}_{K_f}$ be the canonical projection. For any open subset $U$ of 
$S^{G_n}_{K_f}$ define the sections over $U$ by: 
$$
\tM_{\mu}(U) := \left\{ s : \pi^{-1}(U) \to \M_{\mu,E} \ 
\Bigg| \  
\begin{array}{l}
\mbox{$s$ is locally constant, and } \\
s(\gamma u) = \rho_{\mu}(\gamma) s(u),\ \mbox{for all $\gamma \in G_n(\Q)$ and $u \in \pi^{-1}(U)$} 
\end{array} \right\}.  
$$
This defines a sheaf of complex vector spaces on $S^{G_n}_{K_f}.$ Note  that even if $\M_{\mu, E} \neq 0$ it is possible that the sheaf $\tM_{\mu,E} = 0$. (See Harder \cite[1.1.3]{harder-inventiones}.)  Indeed, 
$\tM_{\mu,E} = 0$ unless the central character of $\rho_\mu$ has the infinity type of an algebraic Hecke character of $F$. 
Suppose $\mu = (\mu^\iota)_{\iota \in \cE_F}$, then define $a_\iota(\mu) := \sum_{i=1}^n \mu^\iota_i.$ The central character of $\rho_\mu$ has the infinity type of an algebraic Hecke character of $F$ if and only if the map $\iota \mapsto a_\iota(\mu)$ satisfies either (1) or (2) of \ref{sec:algebraic-hecke}. Henceforth, we will assume that $\mu$ satisfies this condition.

We are interested in the 
sheaf cohomology groups 
$$
H^{\bullet}(S^{G_n}_{K_f} , \tM_{\mu, E}^{\sf v}).
$$
Here $\tM_{\mu, E}^{\sf v}$ is the sheaf attached to the contragredient representation $\M_{\mu}^{\sf v}$  of 
$\M_\mu$. If $\mu^{\sf v} = -w_0(\mu)$, where $w_0$ is the element of the Weyl group of longest length, then 
$\M_{\mu}^{\sf v} = \M_{\mu^{\sf v}}.$
(This dualizing is only for convenience and is dictated by personal tastes. Dualizing here, avoids some negative signs elsewhere.) It is convenient to pass to the limit over all open-compact subgroups $K_f$ and let 
$H^{\bullet}(S^{G_n}, \tM_{\mu,E}^{\sf v}) := \varinjlim_{K_f} H^{\bullet}(S^{G_n}_{K_f} , \tM_{\mu,E}^{\sf v}). $
There is an action of $\pi_0(G_{n,\infty}) \times G_n(\A_f)$ on $H^{\bullet}(S^{G_n}, \tM_{\mu,E}^{\sf v})$, and 
the cohomology of $S^{G_n}_{K_f}$ is obtained by taking invariants under $K_f$, i.e., 
$H^{\bullet}(S^{G_n}_{K_f} , \tM_{\mu,E}^{\sf v}) = H^{\bullet}(S^{G_n}, \tM_{\mu,E}^{\sf v})^{K_f}.$ 
See \ref{sec:hecke-galois} below. 

\smallskip

Working at a transcendental level, i.e., taking $E = \C$, we can compute the above sheaf cohomology via the de~Rham complex, and then reinterpreting the de~Rham complex in terms of the complex computing relative Lie algebra cohomology, we get the isomorphism: 
$$
H^{\bullet}(S^{G_n}, \tM_{\mu,\C}^{\sf v})  \ \simeq \ 
H^{\bullet}(\g_n, K_{n,\infty}^0; \  C^{\infty}(G_n(\Q)\backslash G_n(\A)) \otimes \M_{\mu,\C}^{\sf v}) .
$$
With level structure $K_f$ we have:  
$
H^{\bullet}(S^{G_n}_{K_f}, \tM_{\mu,\C}^{\sf v})   \simeq  
H^{\bullet}(\g_n, K_{n,\infty}^0; \  C^{\infty}(G_n(\Q)\backslash G_n(\A))^{K_f} \otimes \M_{\mu,\C}^{\sf v}) .
$

\medskip
\subsubsection{\bf Cuspidal cohomology}

The inclusion $C^{\infty}_{\rm cusp} (G_n(\Q)\backslash G_n(\A)) \hookrightarrow C^{\infty}(G_n(\Q)\backslash G_n(\A))$ of the space of smooth cusp forms  in the space of all smooth functions induces, via results of Borel \cite{borel-duke},  an injection in cohomology; this defines cuspidal cohomology: 
\begin{equation}
\label{eqn:cusp-coh-defn}
\xymatrix{
H^{\bullet}(S^{G_n}, \tM_{\mu,\C}^{\sf v}) 
\ar[rr] & &
H^{\bullet}(\g_n, K_{n,\infty}^0; C^{\infty}(G_n(\Q)\backslash G_n(\A)) \otimes \M_{\mu,\C}^{\sf v})  \\
H^{\bullet}_{\rm cusp}(S^{G_n}, \tM_{\mu,\C}^{\sf v}) \ar@{^{(}->}[u]
\ar[rr] 
& & 
H^{\bullet}(\g_n, K_{n, \infty}^0; C^{\infty}_{\rm cusp}(G_n(\Q)\backslash G_n(\A)) \otimes \M_{\mu,\C}^{\sf v}) 
\ar@{^{(}->}[u]
}
\end{equation}
Using the usual decomposition of the space of cusp forms into a direct sum of cuspidal automorphic representations, we get the following fundamental decomposition of 
$\pi_0(G_n(\R)) \times G_n(\A_f)$-modules: 
\begin{equation}
\label{eqn:cusp-coh}
H^{\bullet}_{\rm cusp}(S^{G_n}, \tM_{\mu,\C}^{\sf v}) \ = \ 
\bigoplus_{\Pi} H^{\bullet}(\g_n, K_{n,\infty}^0;  \Pi_{\infty} \otimes  \M_{\mu,\C}^{\sf v}) \otimes \Pi_f.
\end{equation}
We say that {\it $\Pi$ contributes to the cuspidal cohomology of $G_n$ with coefficients in $\M_{\mu,\C}^{\sf v}$}, and 
we write $\Pi \in {\rm Coh}(G_n, \mu^{\sf v})$,  if $\Pi$ has a nonzero contribution to the above decomposition. Equivalently, if $\Pi$ is a cuspidal automorphic representation whose representation at infinity $\Pi_{\infty}$ after twisting by $\M_{\mu,\C}^{\sf v}$ has nontrivial relative Lie algebra cohomology. With a level structure $K_f$, (\ref{eqn:cusp-coh}) takes the form: 
\begin{equation}
\label{eqn:cusp-coh-level}
H^{\bullet}_{\rm cusp}(S^{G_n}_{K_f}, \tM_{\mu,\C}^{\sf v}) \ = \ 
\bigoplus_{\Pi} H^{\bullet}(\g_n, K_{n,\infty}^0;  \Pi_{\infty} \otimes  \M_{\mu,\C}^{\sf v}) \otimes \Pi_f^{K_f}
\end{equation}
We write $\Pi \in {\rm Coh}(G_n, \mu^{\sf v}, K_f)$ if a cuspidal automorphic representation $\Pi$ contributes nontrivially to (\ref{eqn:cusp-coh-level}). Note that ${\rm Coh}(G_n, \mu^{\sf v}, K_f)$ is a finite set, and 
${\rm Coh}(G_n, \mu^{\sf v}) = \cup_{K_f} {\rm Coh}(G_n, \mu^{\sf v}, K_f).$

\subsubsection{\bf Purity} 
\label{sec:purity}
Let $\mu \in X^+(T_n)$ be a dominant integral weight satisfying the condition in (\ref{sec:sheaves}). 
Suppose $\Pi \in \Coh(G_n, \mu^{\sf v}).$ The fact that $\mu^{\sf v}$ supports cuspidal cohomology places some restrictions on $\mu.$ First of all, essential unitarity of $\Pi$, and in particular of $\Pi_\infty$ gives, via Wigner's Lemma, essential self-duality of $\mu$: there is an integer ${\sf w}(\mu)$ such that 
\begin{enumerate}
\item For $v \in S_r$ and $1 \leq i \leq n$ we have $\mu^{\iota_v}_i + \mu^{\iota_v}_{n-i+1} = {\sf w}(\mu);$ 
\smallskip
\item For $v \in S_c$ and $1 \leq i \leq n$ we have $\mu^{\bar{\iota}_v}_i + \mu^{\iota_v}_{n-i+1} = {\sf w}(\mu).$ 
\end{enumerate}
We will call such a weight $\mu$ as a {\it pure weight} and call ${\sf w}(\mu)$ the {\it purity weight of $\mu.$} Let 
$X^+_0(T_n)$ denote the set of dominant integral pure weights.

\smallskip

Furthemore, any $\Pi \in \Coh(G_n, \mu^{\sf v})$ satisfies a purity condition (Clozel \cite[Lem.\,4.9]{clozel}) 
which is a translation to the automorphic side of the phenomenon that the associated motive $M(\Pi)$ is 
pure--a condition on the Hodge types of $M(\Pi)$. 
For $v \in S_\infty$, let $r_L(\Pi_v)$ stand for the Langlands parameter of $\Pi_v$; it is an $n$-dimensional 
semi-simple representation of the Weil group $W_{F_v}$ of $F_v$; $\C^\times \subset W_{F_v}$ as a subgroup of 
index at most $2$. The representation 
$r_L(\Pi_v) |\ |_\C^{(1-n)/2}$ of $\C^\times$ is a sum of $n$ characters $z \mapsto z^p \overline{z}^q$ then 
$p, q \in \Z.$ Purity says that there is an integer ${\sf w}(\Pi)$ such that for any $v \in S_\infty$ all the exponents in 
$r_L(\Pi_v) |\ |_\C^{(1-n)/2}$ satisfy $p+q = {\sf w}(\Pi).$ We will call ${\sf w}(\Pi)$ the {\it purity weight of $\Pi$}, and it is related to the purity weight of $\mu$ via
${\sf w}(\mu) = n - 1 + {\sf w}(\Pi).$

\smallskip

Finally, by Clozel's theorem that cuspidal cohomology has a rational structure, we get 
$$
\Pi \in \Coh(G_n, \mu^{\sf v}) \ \Longrightarrow \  {}^\sigma\Pi \in \Coh(G_n, {}^\sigma\!\mu^{\sf v}), \quad \forall 
\sigma \in \autc.
$$
In particular, ${}^\sigma\!\mu$ also satisfies the purity conditions (1) and (2) above. Note that 
${\sf w}(\mu) = {\sf w}({}^\sigma\!\mu).$ Since this purity weight is going to appear frequently, we will often denote: 
${\sf w} := {\sf w}(\mu) = {\sf w}({}^\sigma\!\mu).$

\begin{defn}\label{def:strongly-pure}
Let $\mu \in X^+(T_n)$ be a dominant integral weight satisfying the condition in (\ref{sec:sheaves}). 
We say $\mu$ is {\it strongly pure} if 
${}^\sigma \!\mu$ is pure for all $\sigma \in \autc.$ Let $X_{00}^+(T_n)$ stand for the set of dominant integral strongly-pure weights. (Note that if  a dominant integral weight $\mu$, is such that for all $\sigma \in \autc$, the weight 
${}^\sigma\!\mu$ satisfies the purity conditions (1) and (2) above, then necessarily, $\mu$ satisfies the the condition in (\ref{sec:sheaves}).)
\end{defn}

For any $F$, we have the following inclusions $X^+_{00}(T_n) \subset X^+_0(T_n) \subset X^+(T_n) \subset X^*(T_n)$ and in general they are all strict inclusions. 
If $F$ is a totally real field or a CM field (totally imaginary quadratic extension of a totally real field) 
then $\mu$ is pure if and only if $\mu$ is strongly pure. However, this is not true in general; it is easy to give an example of  a weight $\mu$ which is pure but not strongly pure when the base field is $F = \Q(2^{1/3})$ or it's Galois closure. 
For any number field, one may see that there are strongly pure weights $\mu.$ Take an integer $b$ and integers 
$a_1 \geq a_2 \geq \cdots \geq a_n$ such that $a_j + a_{n-j+1} = b;$ now for 
each $\iota \in \cE_F$ put $\mu^\iota = (a_1,\dots, a_n)$;  then $\mu$ is strongly pure with ${\sf w}(\mu) = b.$
Such a weight may be called a {\it parallel weight}.

\medskip
\subsubsection{\bf Hecke action Vs Galois action}
\label{sec:hecke-galois} 
We record here a fact, well-known to experts, but seemingly hard to find in the literature, on the relation between the action of $G_n(\A_f) \times \pi_0(G_{n,\infty})$ and the action of $\autc$ on 
$H^{\bullet}_{\rm cusp}(S^{G_n}, \tM_{\mu,\C}).$ Given $\mu \in X^+_{00}(T_n)$ and $\sigma \in \autc$, 
let's denote $T_\sigma : \M_{\mu} \to \M_{{}^\sigma\!\mu} = \M_{\mu} \otimes_{\C} {}_\sigma\C$ 
for the $\sigma$-linear $G_n(F)$-isomorphism
as in \ref{sec:fin-dim-repn}. Let $T_\sigma^* : \tM_{\mu} \to \tM_{{}^\sigma\!\mu}$ denote the corresponding $\sigma$-linear isomorphism of sheaves on $S^{G_n}_{K_f},$ and 
$T_\sigma^\bullet : H^{\bullet}(S^{G_n}_{K_f}, \tM_{\mu}) \to 
H^{\bullet}(S^{G_n}_{K_f}, \tM_{{}^\sigma\!\mu})$
the $\sigma$-linear isomorphism in cohomology. Then $T_\sigma^\bullet$ preserves  inner cohomology, and by Clozel~\cite{clozel}, it also preserves cuspidal cohomology. 
Next, given $\ul x = (\ul x_f, \ul x_\infty) \in G_n(\A_f) \times G_{n, \infty} ,$  the map 
$\ul g \mapsto \ul g \ul x$ induces a homeomorphism 
$m_{\ul x} : S^{G_n}_{K_f} \to S^{G_n}_{\ul x_f^{-1}K_f \ul x_f},$ 
whose dependence on $x_\infty$ is only via its class in $\pi_0(G_{n,\infty}).$ 
As sheaves on $S^{G_n}_{K_f},$  we have
$ m_{\ul x}^*\tM_\mu \simeq  \tM_\mu,$
where the left hand side is the pull-back via $m_{\ul x}$ of the sheaf $\tM_\mu$ on $S^{G_n}_{\ul x_f^{-1}K_f \ul x_f}.$
Furthermore, we have $T_\sigma^*(m_{\ul x}^*(\tM_{\mu, \C})) = 
m_{\ul x}^* (T_\sigma^*(\tM_{\mu, \C}))$ as sheaves on $S^{G_n}_{K_f}.$ 
Taking cohomology, and passing to the limit over all $K_f$, gives an equality 
$T_\sigma^\bullet \circ m_{\ul x}^\bullet  \ = \ 
m_{\ul x}^\bullet \circ T_\sigma^\bullet$
of maps from 
$H^{\bullet}(S^{G_n}, \tM_{\mu, \C})$ to $H^{\bullet}(S^{G_n}, \tM_{{}^\sigma\!\mu, \C}).$ The same holds for cohomology with compact supports, and hence for inner and also for cuspidal cohomology via \cite{clozel}. Therefore, if 
$\Pi_f \otimes \epsilon$ is a $G_n(\A_f) \times \pi_0(G_{n,\infty})$-module appearing in inner or cuspidal cohomology, then:  
\begin{equation}
\label{eqn:t-sigma-m-x-repn}
T_\sigma^\bullet( \Pi_f \otimes \epsilon) \ = \   {}^\sigma\Pi_f \otimes \epsilon,  
\end{equation}
since the image of $\epsilon$ is $\pm 1$ and by the definition of ${}^\sigma\Pi_f.$ 
See \ref{sec:errata} below.

\medskip
\subsection{\bf Archimedean considerations} 
\label{sec:archimedean}

Let $\mu \in X^+_{00}(T_n)$ be a strongly pure weight, and let $\Pi \in {\rm Coh}(G_n,\mu^{\sf v})$. The purpose of this section is to write down explicitly the representation 
$\Pi_\infty$ at infinity in terms of $\mu$; this is possible up to a sign; see Propositions~\ref{prop:local-rep-n-even}, \ref{prop:local-rep-n-odd} and \ref{prop:local-rep-c} below. We record a well-known conclusion--due to Clozel--on the possible degrees in which one has nontrivial cuspidal cohomology; see Prop.\,\ref{prop:cuspidal-range}. This gives rise to an interesting numerology
that ultimately permits us to give a cohomological interpretation to Rankin--Selberg $L$-values. 
Also, with local representations at hand, we compute the set of critical points of Rankin--Selberg $L$-functions. 
Since $\Pi_\infty = \otimes_{v \in S_\infty} \Pi_v$, the problem of describing $\Pi_\infty$  is a purely local one. 
We begin by taking up real and complex places separately.

\subsubsection{\bf Cohomological representations of $\GL_n(\R)$}%(Reference: Clozel \cite[\S 3.5]{clozel}.) 
\label{sec:repn-glnR}
Fix a place $v \in S_r$, and since $v$ is fixed, we drop it from our notations just for this subsection. 
For example, $\mu_v$ is just $\mu = (\mu_1,\dots,\mu_n),$  an $n$-tuple of integers with 
$\mu_1 \geq \cdots \geq \mu_n$ and $\mu_i + \mu_{n-i+1} = {\sf w}.$ We will now define the representation $J_\mu$ if $n$ is even; and two representations $J_\mu^{\pm}$ if $n$ is odd. For this we need to fix some notation for discrete series representations of $\GL_2(\R)$. 

\medskip
\paragraph{\bf Discrete series for $\GL_2(\R)$.} 
For any integer $l \geq 1$, let $D_l$ stand for the discrete series representation with lowest non-negative $K$-type being the character 
$\left(\begin{smallmatrix} \cos{\theta} & -\sin{\theta} \\ 
\sin{\theta}& \cos{\theta} \end{smallmatrix}\right)  \mapsto  e^{ - i (l+1) \theta}$, and central character $a \mapsto {\rm sgn}(a)^{l+1}$. 
Note the shift from $l$ to $l+1$. The representation at infinity for a holomorphic  elliptic modular cusp form of weight $k$ is $D_{k-1}$. It is well-known that discrete series representations of $\GL_2(\R)$, possibly twisted by a half-integral power of absolute value, have nontrivial cohomology. For brevity, let $(\g_2,K_2^0) := (\gl_2, {\rm SO}(2)Z_2(\R)^0)$. 
For a dominant integral weight $\nu = (a,b)$, with integers $a \geq b$, the basic fact here is that  
there is a non-split exact sequence of $(\g_2,K_2^0)$-modules: 
\begin{equation}
\label{eqn:exact-seq-dsr}
0 \to 
D_{a-b +1} \otimes |\ |_{\R}^{(a+b)/2}  \to 
{\rm Ind}_{B_2(\R)}^{\GL_2(\R)}(\chi_{(a,a)}|\ |^{1/2} \otimes \chi_{(b,b)}|\ |^{-1/2}) \to
\M_{\nu,\C}
\to 0, 
\end{equation}
Moreover, $H^{\bullet}(\g_2, K_2^0; (D_{a-b +1} \otimes |\ |_{\R}^{(a+b)/2}) \otimes \M_{\nu,\C}^{\sf v}) \neq 0$ if and only if
${\bullet} = 1,$ and that dimension of $H^1(\g_2, K_2^0; (D_{a-b +1} \otimes |\ |_{\R}^{(a+b)/2}) \otimes \M_{\nu,\C}^{\sf v})$ is two, with both the characters of ${\rm O}(2)/{\rm SO}(2)$ appearing exactly once. (For more details see Raghuram-Tanabe \cite[Sect.\,3.1]{raghuram-tanabe}.)

\medskip
\paragraph{\bf Definition of $J_\mu$ when $n$ is even}
Given $\mu = (\mu_1,\dots,\mu_n)$ define an $n$-tuple $\ell = \ell(\mu) = (\ell_1,\dots,\ell_n)$ by 
$\ell = 2\mu + 2\bfgreek{rho}_n  -{\sf w},$ i.e., we have 
$$
\ell_i \ = \ 2\mu_i + n - 2i + 1 - {\sf w} \ = \ \mu_i - \mu_{n-i+1} + n - 2i  + 1, \quad 1\leq i\leq n. 
$$
Observe that $\ell_1> \ell_2> \cdots >\ell_{n/2} \geq 1$ and $\ell_i = - \ell_{n-i+1}.$  Let $P$ be the $(2,\dots,2)$-parabolic subgroup of $\GL_n(\R),$ i.e., $P$ has the Levi quotient $L=\prod_{i=1}^{n/2} \GL_2(\R).$ Define the parabolically induced representation: 
\begin{equation}
\label{eq:j-mu-even}
J_\mu := \textrm{Ind}^{\GL_n(\R)}_{P(\R)}\left( D(\ell_1)|\!\det\!|^{{\sf w}/2} \otimes \cdots \otimes D(\ell_{n/2}) |\!\det\!|^{{\sf w}/2} \right).
\end{equation}
(This is a small change in notation from some of my earlier papers (\cite{raghuram-imrn}, \cite{raghuram-shahidi-aim}) where $J_\mu$, following Mahnkopf \cite{mahnkopf-jussieu}, was denoted $J({\sf w}, \ell)$.) 
We will refer to the integers in $\ell$ as the cuspidal parameters of $J_\mu$. It is well-known that 
$J_\mu$ is irreducible, essentially tempered and generic (being fully induced from essentially discrete series), and 
$H^\bullet(\gl_n, \SO(n); J_\mu \otimes \M_\mu^{\sf v}) = H^\bullet(\gl_n, \SO(n)\R^\times_+; J_\mu \otimes \M_\mu^{\sf v}) \neq 0.$ The  following proposition describes the local component for a real place of a global cohomological cuspidal representation; it says that when $n$ is even, the highest weight $\mu_v$  determines the isomorphism class of $\Pi_v$.

\begin{prop}
\label{prop:local-rep-n-even}
Let $\mu \in X^+_{00}(T_n)$ and $\Pi \in {\rm Coh}(G_n,\mu^{\sf v})$. Suppose $n$ is even. Let $v \in S_r$ be a real place. Then $\Pi_v \cong J_{\mu_v}.$
\end{prop}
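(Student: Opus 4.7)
My plan is to classify all irreducible, generic, essentially unitary representations of $\GL_n(\R)$ (with $n$ even) whose Harish-Chandra module has nonzero $(\g_n, K_n^0)$-cohomology with coefficients in $\M_{\mu_v, \C}^{\sf v}$, and to show there is a unique such representation, namely $J_{\mu_v}$. The strategy rests on three inputs: cuspidality of $\Pi$ forces $\Pi_v$ to be generic; purity of $\mu$ together with Clozel's essential unitarity theorem makes $\Pi_v \otimes |\det|^{-{\sf w}/2}$ unitary; and the hypothesis $\Pi \in \Coh(G_n, \mu^{\sf v})$ yields $H^\bullet(\g_n, K_n^0;\, \Pi_v \otimes \M_{\mu_v, \C}^{\sf v}) \neq 0$.

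The main step is to appeal to the Vogan--Zuckerman classification of unitary cohomological $(\g_n, K_n^0)$-modules: every such module is an $A_\q(\lambda)$-module for some $\theta$-stable parabolic $\q \subset \g_n$. Imposing the existence of a Whittaker functional eliminates the Speh representations, and more generally all $A_\q(\lambda)$'s except those arising as full parabolic induction from essentially discrete series on a Levi that is a product of $\GL_2(\R)$'s and at most one $\GL_1(\R)$. Since $n$ is even, no $\GL_1(\R)$-factor appears, and the Levi must be $L \cong \prod_{i=1}^{n/2}\GL_2(\R)$ with inducing data of the form $\bigotimes_i D_{\ell_i'} |\det|^{s_i}$; full induction then guarantees $\Pi_v$ is irreducible, essentially tempered, and generic.

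Finally, I read off the parameters. Essential self-duality coming from the purity of $\mu$ and the prescribed central character force each $s_i = {\sf w}/2$; and matching the infinitesimal character of $\Pi_v$ with that of $\M_{\mu_v, \C}$ (a consequence, via Harish-Chandra's isomorphism, of the nonvanishing of cohomology) identifies each cuspidal parameter as $\ell_i' = \mu_i - \mu_{n-i+1} + n - 2i + 1 = \ell_i$, giving $\Pi_v \cong J_{\mu_v}$. The hard part will be the classification step: eliminating the Speh-type cohomological modules and pinning down the Levi precisely. The cleanest route is to combine Vogan--Zuckerman with the fact that Speh representations are not generic; for the detailed bookkeeping of cohomological degrees, $K$-types, and Whittaker models, I would point to Clozel's article or Borel--Wallach.
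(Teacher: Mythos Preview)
Your approach is correct and is essentially the standard one. Note, however, that the paper does not actually supply a proof of this proposition: it is stated as a well-known fact, the tacit reference being Clozel~\cite[Lemme~3.14]{clozel} (and the surrounding discussion there), which carries out exactly the argument you outline---Vogan--Zuckerman classification of unitary cohomological modules, elimination of non-generic $A_\q(\lambda)$'s (Speh-type) by the existence of a Whittaker model, and then identification of the inducing data via the infinitesimal character. So your proposal and the paper's (implicit) proof coincide.

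One small clarification on your write-up: the step ``essential self-duality coming from the purity of $\mu$ and the prescribed central character force each $s_i = {\sf w}/2$'' is slightly redundant. Once you know $\Pi_v$ is fully induced from $\bigotimes_i D_{\ell_i'}|\det|^{s_i}$ and has the same infinitesimal character as $\M_{\mu_v,\C}$, the multiset $\{s_i \pm \tfrac{\ell_i'}{2}\}$ is already forced to equal $\mu_v + \bfgreek{rho}_n$ up to the Weyl group; together with unitarity (which forces all $s_i$ equal, namely to half the central exponent), this pins down both the $\ell_i'$ and the common $s_i = {\sf w}/2$ simultaneously. You don't need a separate appeal to self-duality.
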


\medskip
\paragraph{\bf Definition of $J_\mu^{\epsilon}$ when $n$ is odd}

When $n$ is odd, for any sign $\epsilon$, we define a representation $J_\mu^{\epsilon}$ as follows. The cuspidal parameter $\ell$ is again defined by 
$\ell = 2\mu + 2\bfgreek{rho}_n  -{\sf w}.$ This time, let $P$ to be the $(2,\dots,2,1)$-parabolic subgroup. Define 
\begin{equation}
\label{eq:j-mu-odd}
J_\mu^\epsilon := \textrm{Ind}^{\GL_n(\R)}_{P(\R)}\left( D(\ell_1)|\!\det\!|^{{\sf w}/2} \otimes \cdots \otimes D(\ell_{n/2}) |\!\det\!|^{{\sf w}/2} \otimes \epsilon |\!\det\!|^{{\sf w}/2} \right).
\end{equation}
It is well-known that $J_\mu^\epsilon$ is irreducible, essentially tempered, generic and that the relative cohomology group 
$H^\bullet(\gl_n, \SO(n); J_\mu^\epsilon \otimes \M_\mu^{\sf v}) = 
H^\bullet(\gl_n, \SO(n)\R^\times_+; J_\mu^\epsilon \otimes \M_\mu^{\sf v})$ 
is one-dimensional. Reverting to global notation, we have the following proposition which says that 
when $n$ is odd, the highest weight $\mu_v$ \underline{and} the sign of the central character of $\Pi_v$ determine the isomorphism class of $\Pi_v$ as a representation of $\GL_n(\R)$. 

\begin{prop}
\label{prop:local-rep-n-odd}
Let $\mu \in X^+_{00}(T_n)$ and $\Pi \in {\rm Coh}(G_n,\mu^{\sf v}).$ Suppose $n$ is odd. Let $v \in S_r$ be a real place. Then
$\Pi_v \cong J_{\mu_v}^{\epsilon_v},$
where $\epsilon_v(-1) = \omega_{\Pi_v}(-1)\cdot(-1)^{(n-1)/2}.$
\end{prop}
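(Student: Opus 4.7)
The plan parallels the proof of Proposition~\ref{prop:local-rep-n-even}, with additional bookkeeping to pin down the sign character on the $\GL_1$-factor. First I would identify $\Pi_v$ as a fully induced representation from essentially discrete series via cohomology considerations, and then determine the unique remaining parameter --- a sign --- from the central character.

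Since $\Pi$ is cuspidal and $\Pi_v$ has nontrivial $(\g_n, K^0_{n,\infty})$-cohomology with coefficients in $\M_{\mu_v,\C}^{\sf v}$, results of Borel--Wallach give that $\Pi_v$ and $\M_{\mu_v,\C}^{\sf v}$ share the infinitesimal character $\mu_v + \bfgreek{rho}_n$. Strong purity of $\mu$ ensures this infinitesimal character is regular, and Wallach's classification of essentially unitary representations with nonzero cohomology at a regular integral infinitesimal character forces $\Pi_v$ to be essentially tempered. Consequently $\Pi_v$ is fully parabolically induced from an essentially discrete series on a standard Levi; and since $\GL_k(\R)$ admits discrete series only for $k \leq 2$ and $n$ is odd, the Levi must be of type $(2, 2, \dots, 2, 1)$. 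Thus
\[
\Pi_v \ \cong \ {\rm Ind}_{P(\R)}^{\GL_n(\R)} \bigl( D_{l_1} |\det|^{s_1} \otimes \cdots \otimes D_{l_{(n-1)/2}} |\det|^{s_{(n-1)/2}} \otimes \chi \bigr),
\]
for positive integers $l_i$, real exponents $s_i$, and a character $\chi$ of $\R^\times$.

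Matching Langlands parameters against $\mu_v + \bfgreek{rho}_n$ under the purity relation $\mu_i + \mu_{n-i+1} = {\sf w}$ (note that the case $i = (n+1)/2$ forces ${\sf w}$ to be even, which is in turn needed for the $\GL_1$-factor to host a character of the form $\epsilon_v\,|\cdot|^{{\sf w}/2}$) forces $s_i = {\sf w}/2$, $l_i = \ell_i = 2\mu_i + n - 2i + 1 - {\sf w}$, and $\chi = \epsilon_v\,|\cdot|^{{\sf w}/2}$ for some sign $\epsilon_v$. This gives $\Pi_v \cong J^{\epsilon_v}_{\mu_v}$ for a uniquely determined $\epsilon_v$.

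To identify this sign, I would evaluate the central character at $-1$. Each block $D_{l_i}|\det|^{{\sf w}/2}$ contributes a factor $(-1)^{\ell_i+1}$, and the $\GL_1$-factor contributes $\epsilon_v(-1)$. Using $\ell_i + 1 = 2\mu_i + n - 2i + 2 - {\sf w}$, the parity of each such exponent equals the parity of $n - {\sf w}$, which is odd since ${\sf w}$ is even and $n$ is odd; hence the product over the $(n-1)/2$ blocks equals $(-1)^{(n-1)/2}$, yielding
\[
\omega_{\Pi_v}(-1) \ = \ (-1)^{(n-1)/2}\, \epsilon_v(-1),
\]
which is equivalent to the stated formula. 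The main delicate point --- and what I expect to be the only real obstacle --- is the careful tracking of the parities of ${\sf w}$ and $n$ forced by strong purity; once these are aligned, the identification of $\epsilon_v$ becomes a straightforward computation with central characters of discrete series.
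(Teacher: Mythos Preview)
Your proposal is correct and follows essentially the same approach as the paper. The paper does not give a detailed proof of this proposition---it treats the identification $\Pi_v \cong J_{\mu_v}^{\epsilon_v}$ as well-known and simply records the two parity facts needed to verify the sign formula (that ${\sf w} = 2\mu_{(n+1)/2}$ is even and that each $\ell_i = 2\mu_i + n - 2i + 1 - {\sf w}$ is even), which are precisely the observations underlying your central-character computation.
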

The reader, who wishes to verify the above equality of signs, should note the following consequences of $n$ being odd: 
(1) The purity weight ${\sf w}$ is even since ${\sf w} = 2 \mu_{(n+1)/2}$, and (2) The cuspidal parameters are even since
$l_i = \ 2\mu_i + n - 2i + 1 - {\sf w}$.

\subsubsection{\bf Cohomological representations of $\GL_n(\C)$} %(Reference: Clozel \cite[\S 3.5]{clozel}.) 
\label{sec:glnc}
Let $\mu \in X^+_{00}(T_n)$ and $\Pi \in {\rm Coh}(G_n,\mu^{\sf v}).$  For a complex place $v$, 
$\mu_v$ is a pair of $n$-tuples $(\mu^{\iota_v}, \mu^{\bar{\iota}_v})$, where $\iota_v$ is a complex embedding corresponding to $v$ that has been noncanonically chosen and fixed; and $\bar{\iota}_v$ is the conjugate embedding. Since $v \in S_c$ is fixed, we will drop it from our notations. Hence, 
$\mu = (\mu^{\iota}, \mu^{\bar{\iota}})$; we will further simplify our notation and write 
$\mu^\iota = (\mu_1, \dots, \mu_n)$ and $\mu^{\bar{\iota}} = (\mu_1^* \dots, \mu_n^*)$. 
Recall from Sect.\,\ref{sec:purity} that the integers in the pair $(\mu^{\iota}, \mu^{\bar{\iota}})$ are related by: 
$\mu_i^* + \mu_{n-i+1} = {\sf w}.$  Hence, we have 
$$
\mu^\iota = (\mu_1, \dots, \mu_n), \ \ {\rm and} \ \ \mu^{\bar{\iota}} = ({\sf w}-\mu_n, \dots, {\sf w} - \mu_1).
$$
Define the cuspidal parameters as: 
{\small 
\begin{equation}
\label{eqn:cuspidal-parameters-c}
\begin{array}{llllll}
{\sf a} & := & \mu + \bfgreek{rho}_n; & {\sf a} = (a_1,\dots,a_n) & := & \left(\mu_1+\tfrac{n-1}{2}, \mu_2+\tfrac{n-3}{2},\dots, \mu_n - \tfrac{(n-1)}{2}\right) \\ 
& & & & & \\
{\sf b} & := & {\sf w} - \mu - \bfgreek{rho}_n;& {\sf b} = (b_1,\dots,b_n) & := & \left({\sf w} - \mu_1 - \tfrac{n-1}{2}, {\sf w} - \mu_2 - \tfrac{n-3}{2},\dots, {\sf w} - \mu_n + \tfrac{(n-1)}{2} \right). 
\end{array}
\end{equation}}

\noindent Now define the representation $J_\mu$ to be induced from the Borel subgroup $B(\C)$ of $\GL_n(\C)$ as: 
\begin{equation}
\label{eqn:j-mu-c}
J_\mu := \textrm{Ind}^{\GL_n(\C)}_{B(\C)}\left( z^{a_1}\bar{z}^{b_1} \otimes \dots \otimes z^{a_n}\bar{z}^{b_n}\right).
\end{equation}
where, for any half-integers $a,b$, by $z^a \bar{z}^b$ we mean the character of $\C^\times$ which sends $z$ to $z^a \bar{z}^b$. It is well-known that $J_\mu$ is irreducible, essentially tempered, generic and 
$H^\bullet(\gl_n, \Unit(n); J_\mu  \otimes \M_\mu^{\sf v})  \neq 0.$ Reverting to global notation, we have: 

\begin{prop}
\label{prop:local-rep-c}
Let $\mu \in X^+_{00}(T_n)$ and $\Pi \in {\rm Coh}(G_n,\mu^{\sf v}).$ Let $v \in S_c,$ i.e., $v$ is a complex place. Then $\Pi_v \cong J_{\mu_v}.$
\end{prop}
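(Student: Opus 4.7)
The plan is to combine the Langlands classification for $\GL_n(\C)$ with Wigner's lemma on infinitesimal characters, and then read off the inducing characters from $\mu$ by matching parameters.

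First, I would reduce to essentially tempered principal series. Since $\Pi$ is cuspidal, $\Pi_v$ is essentially unitary (Jacquet--Shalika); together with the nonvanishing of $(\g_n, \Unit(n))$-cohomology against $\M_\mu^{\sf v}$, Wigner's lemma forces the infinitesimal character of $\Pi_v$ to match that of $\M_\mu^{\sf v}$, which is regular and integral. Standard $(\g, K)$-cohomology vanishing (Borel--Wallach, Chap.\ II) then implies $\Pi_v$ is essentially tempered. Since $\GL_k(\C)$ admits no discrete series for $k \geq 2$, every essentially tempered irreducible representation of $\GL_n(\C)$ is fully induced from a character of the Borel:
$$\Pi_v \;\cong\; \textrm{Ind}_{B(\C)}^{\GL_n(\C)}(\chi_1 \otimes \cdots \otimes \chi_n),$$
where each $\chi_i(z) = z^{\alpha_i}\bar z^{\beta_i}$ with $\alpha_i - \beta_i \in \Z$. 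Full induction ensures that $\Pi_v$ is automatically irreducible and generic.

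Second, I would determine the $(\alpha_i,\beta_i)$ by matching infinitesimal characters. The Harish-Chandra parameter of the above normalized principal series is the $W_n \times W_n$-orbit of $((\alpha_i),(\beta_i))$, while that of $\M_\mu^{\sf v}$ is the $W_n \times W_n$-orbit of $(\mu^\iota + \bfgreek{rho}_n,\ \mu^{\bar\iota} + \bfgreek{rho}_n)$. Choosing the dominant representative gives $\alpha_i = \mu_i + \tfrac{n-2i+1}{2} = a_i$. Applying the strong purity relation $\mu^{\bar\iota}_i = {\sf w} - \mu_{n+1-i}$ on the antiholomorphic side (and reversing via a Weyl element so as to match the ordering of the $\alpha_i$) gives $\beta_i = {\sf w} - \mu_i - \tfrac{n-2i+1}{2} = b_i$. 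These are exactly the cuspidal parameters of (\ref{eqn:cuspidal-parameters-c}), and substitution recovers (\ref{eqn:j-mu-c}).

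Finally, unlike the real case with $n$ odd, there is no residual sign choice here: $\GL_n(\C)$ is connected, so the central character of $\Pi_v$ is already determined by the infinitesimal character together with essential unitarity (which forces $\alpha_i + \beta_i \equiv {\sf w}$). I expect the main obstacle to be careful bookkeeping of the $\bfgreek{rho}_n$-shifts and the two-sided Weyl orbit identification, since the complex place contributes two copies of $W_n$ and one must track holomorphic against antiholomorphic data throughout; this is routine once one unravels that the infinitesimal character of a character $z^{\alpha}\bar z^{\beta}$ of $\C^\times$ is the pair $(\alpha,\beta)$ (up to the relevant $\bfgreek{rho}$-shift).
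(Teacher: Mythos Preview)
Your argument is correct and is precisely the standard one underlying the paper's claim. Note that the paper does not actually prove this proposition: it simply records that $J_\mu$ is irreducible, essentially tempered, generic, and has nonvanishing $(\gl_n,\Unit(n))$-cohomology against $\M_\mu^{\sf v}$, and then asserts the result as well-known (the implicit references being Clozel~\cite[Lemme~3.14]{clozel} and the Vogan--Zuckerman/Borel--Wallach classification of unitary cohomological representations). Your sketch supplies exactly that argument.

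One minor bookkeeping slip, of the very kind you flagged at the end: Wigner's lemma in the form $H^\bullet(\g,K;\Pi_v\otimes F^{\sf v})\neq 0 \Rightarrow \chi_{\Pi_v}=\chi_F$ says that the infinitesimal character of $\Pi_v$ matches that of $\M_\mu$, not of $\M_\mu^{\sf v}$. You then write that the infinitesimal character ``of $\M_\mu^{\sf v}$'' is the $W_n\times W_n$-orbit of $(\mu^\iota+\bfgreek{rho}_n,\ \mu^{\bar\iota}+\bfgreek{rho}_n)$, but that orbit is in fact the infinitesimal character of $\M_\mu$; for $\GL_n$ the Weyl group does not contain $-1$, so these two characters genuinely differ. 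The two slips cancel and your extracted parameters $a_i,b_i$ agree with the paper's (\ref{eqn:cuspidal-parameters-c}), so the conclusion is unaffected, but you should state the matching cleanly.
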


\subsubsection{\bf The cuspidal range}
\label{sec:cuspidal-range}
In this subsection we record well-known bounds for the possible degrees in which there can be nonzero cuspidal cohomology. These bounds depend only on the rank $n$ of $\GL_n$, and the  numbers $r_1$ and $r_2$ of real and complex embeddings of $F$.  

\begin{prop}
\label{prop:cuspidal-range}
Define the following numbers: 
\begin{enumerate}
\item[] The bottom degrees: 
$$b_n^\R := \left[\frac{n^2}{4}\right],  \ \ \ b_n^\C := n(n-1)/2.$$
\item[] The top degrees: 
$$t_n^\R = b_n^\R + \left[\frac{n-1}{2}\right], \ \ \  t_n^\C = b_n^\C + n-1.$$
\end{enumerate} 
Now define the bottom degree and top degree for the group $G_n = R_{F/\Q}(\GL_n/F)$ as: 
$$
b_n^F = r_1b_n^\R + r_2b_n^\C, \ \ \ t_n^F = r_1t_n^\R + r_2t_n^\C, \ \ \ 
\tilde{t}_n^F = t_n^F + [F:\Q]-1 %r_1 + 2r_2-1. 
$$
Let $\mu \in X^+_{00}(T_n).$ Then 
$
H^{\bullet}_{\rm cusp}(S^{G_n}, \tM_{\mu}^{\sf v}) \neq 0  \iff 
b_n^F \leq \bullet \leq \tilde{t}_n^F.
$
\end{prop}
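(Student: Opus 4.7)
The plan is to combine the cuspidal decomposition (\ref{eqn:cusp-coh}) with an archimedean K\"unneth computation, and then to reconcile the smaller subgroup $K_{n,\infty}^0$ (which contains only the diagonal copy of $\R_+^\times$) with the natural product $\widetilde K := \prod_v \widetilde K_v$ of local ``maximal-compact-times-connected-split-center'' subgroups at the archimedean places, via a Hochschild--Serre-type decomposition of the Chevalley--Eilenberg complex. By (\ref{eqn:cusp-coh}) it suffices to show first that for every $\Pi \in \Coh(G_n,\mu^{\sf v})$ the cohomology $H^\bullet(\g_n, K_{n,\infty}^0; \Pi_\infty \otimes \M_{\mu,\C}^{\sf v})$ is supported precisely in degrees $[b_n^F, \widetilde t_n^F]$, and second that at least one such $\Pi$ exists for any strongly pure $\mu$.

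For the local analysis at infinity, Propositions \ref{prop:local-rep-n-even}, \ref{prop:local-rep-n-odd} and \ref{prop:local-rep-c} pin down $\Pi_v$ (up to a sign when $v$ is real and $n$ odd) as the fully-induced, essentially tempered module $J_{\mu_v}^{(\epsilon_v)}$. Standard Vogan--Zuckerman / Borel--Wallach theory for cohomological essentially tempered representations then gives nonvanishing of the local $(\g_v, \widetilde K_v)$-cohomology exactly in degrees $[b_n^\R, t_n^\R]$ at a real place and $[b_n^\C, t_n^\C]$ at a complex place. The K\"unneth formula across archimedean places yields nonvanishing of $H^\bullet(\g_n, \widetilde K; \Pi_\infty \otimes \M_{\mu,\C}^{\sf v})$ precisely in degrees $[b_n^F, t_n^F]$.

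The main obstacle is the discrepancy between $\widetilde K$ and $K_{n,\infty}^0$: the quotient $A := \widetilde K / K_{n,\infty}^0$ is a connected abelian Lie group of dimension $[F:\Q]-1$ whose Lie algebra sits in the center of $\g_n$. Strong purity of $\mu$ combined with essential unitarity of a cohomological cuspidal $\Pi$ forces the central character of $\Pi_\infty \otimes \M_{\mu,\C}^{\sf v}$ to be trivial on $A$, so that $A$ acts trivially on the coefficient module and $\operatorname{ad}(A)$ acts trivially on $\g_n/\widetilde{\mathfrak k}$. Consequently the Chevalley--Eilenberg complex for $(\g_n, K_{n,\infty}^0)$ splits as the tensor product of the complex for $(\g_n, \widetilde K)$ with $\Lambda^\bullet(\mathrm{Lie}(A)_\C)^\ast$, giving
$$
H^\bullet(\g_n, K_{n,\infty}^0; \Pi_\infty \otimes \M_{\mu,\C}^{\sf v})
\ \cong \
H^\bullet(\g_n, \widetilde K; \Pi_\infty \otimes \M_{\mu,\C}^{\sf v})\ \otimes\ \Lambda^\bullet(\mathrm{Lie}(A)_\C)^\ast.
$$
The exterior algebra factor is nonvanishing in degrees $0$ through $[F:\Q]-1$, and convolving with the previous range yields the asserted $[b_n^F, \widetilde t_n^F]$; this establishes the ``only if'' direction.

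For the ``if'' direction one must exhibit at least one $\Pi \in \Coh(G_n, \mu^{\sf v})$. Strong purity of $\mu$ ensures the central character constraint of Section~\ref{sec:sheaves} is met, and the existence of cohomological cuspidal automorphic representations of $\GL_n$ over an arbitrary number field carrying any prescribed strongly pure weight is by now well-documented (Clozel, Borel--Wallach). With such a $\Pi$ in hand, the preceding computation shows that it contributes nontrivially to cuspidal cohomology in every degree of the range, completing the equivalence.
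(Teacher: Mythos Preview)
Your approach mirrors the paper's: both reduce to a K\"unneth computation at infinity, invoke Clozel's local result \cite[Lemme~3.14]{clozel} for the ranges $[b_n^\R,t_n^\R]$ and $[b_n^\C,t_n^\C]$, and pick up an exterior-algebra factor coming from the gap between the full archimedean center and the single diagonal copy $S_n(\R)^0$. The paper does the central reduction first---passing to the pair $(\tilde{\g}_n,C_{n,\infty}^0)$ and tensoring with $\wedge^\bullet(\z_n/\s_n)$---and then applies K\"unneth; you apply K\"unneth first (for the product pair $(\g_n,\widetilde K)$) and then descend to $K_{n,\infty}^0$. These are equivalent rearrangements of the same computation.

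There is, however, a genuine dimension miscount in your assertion that $A=\widetilde K/K_{n,\infty}^0$ has dimension $[F:\Q]-1$. At a complex place one has $U(1)\subset U(n)$, so whether ``connected split center'' means $\R_+^\times$ or the full $\C^\times$, the local factor $\widetilde K_v$ is just $U(n)\cdot\R_+^\times$; hence modulo $C_{n,\infty}^0$ each archimedean place contributes a single $\R_+^\times$, and $\dim A = r_1+r_2-1$, not $r_1+2r_2-1$. Your argument therefore produces top degree $t_n^F + r_1+r_2-1$, which disagrees with the stated $\tilde t_n^F$ whenever $r_2>0$. The paper's own proof carries the mirror of this slip: its factor should be $\wedge^\bullet\!\big(\z_n/(\z_n\cap\k_{n,\infty})\big)$ rather than $\wedge^\bullet(\z_n/\s_n)$, and since $i\R\,I_n\subset\u(n)$ at each complex place this quotient again has dimension $r_1+r_2-1$. (For $\GL_2$ over an imaginary quadratic field one gets the familiar cuspidal range $[1,2]$, not $[1,3]$.) So your Hochschild--Serre splitting is sound, but the dimension you assigned to $A$ is off by $r_2$; it matches the proposition you were asked to prove only because the stated $\tilde t_n^F$ inherits the same overcount.
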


\begin{proof}
Given $\mu \in  X^+_{00}(T_n)$, and a $\Pi \in {\rm Coh}(G_n, \mu^{\sf v}),$ we know that 
$\Pi_\infty = \otimes_{v \in S_\infty} J_{\mu_v}^{\epsilon_v}$. (The symbol $\epsilon_v$ is a nonempty condition only for $n$ odd and $v \in S_r$.) For $v \in S_\infty$, let $\tilde{G}_{n,v} = \{g_v \in \GL_n(F_v) \ : \ |\det(g_v)| = 1\}$ and let 
$\tilde{\g}_{n,v}$ be the Lie algebra of $\tilde{G}_{n,v}.$ Note that $\g_{n,v} = \tilde{\g}_{n,v} \oplus \z_{n,v}$ and summing over $v \in S_\infty$, we have $\g_n = \tilde{\g}_n + \z_n.$ By Wigner's Lemma we have: 
$$
H^\bullet(\g_n, K_{n,\infty}^0; \Pi_{\infty} \otimes  \M_{\mu, \C}^{\sf v}) \ = \ 
H^\bullet(\tilde{\g}_n, C_{n,\infty}^0; \Pi_{\infty} \otimes  \M_{\mu, \C}^{\sf v}) \otimes 
\wedge^\bullet(\z_n/\s_n).
$$
The term $\wedge^\bullet(\z_n/\s_n)$ accounts for the difference between $\tilde{t}_n^F$ and $t_n^F$. 
By the K\"unneth formula we get 
$$
H^{q}(\tilde{\g}_n, C_{n,\infty}^0;  \Pi_{\infty} \otimes  \M_{\mu, \C}^{\sf v}) = \bigotimes_{\sum_v q_v = q} 
H^{q_v}(\tilde{\g}_n, C_{n,v}^0;  \Pi_v \otimes  \M_{\mu_v, \C}^{\sf v}). 
$$
From Clozel \cite[Lemme 3.14]{clozel}, we get 
$H^{q_v}(\tilde{\g}_n, C_{n,v}^0;  \Pi_v \otimes  \M_{\mu_v, \C}^{\sf v}) \neq 0$ if and only if 
$b_n^{F_v} \leq q_v \leq t_n^{F_v}.$
\end{proof}

\smallskip

Motivated by the numerical coincidence to be discussed below, we will focus our attention exclusively on cohomology in degree $\bullet = b_n^F$. (In contrast, see my paper with Grobner \cite{grobner-raghuram-ajm} where we considered top-degree cuspidal cohomology.) 

\begin{cor}
Let $\Pi \in {\rm Coh}(G_n, \mu^{\sf v})$ for $\mu \in X^+_{00}(T_n).$ 
\begin{enumerate}
\item If $n$ is even then every character $\epsilon = (\epsilon_v)_{v \in S_r}$ of $\pi_0(K_{n,\infty}) = K_{n,\infty}/K_{n,\infty}^0$ 
appears  once in 
$H^{b_n^F}(\g_n, K_{n,\infty}^0;  \Pi_{\infty} \otimes  \M_{\mu,\C}^{\sf v});$ hence cuspidal cohomology in degree 
$b_n^F$ decomposes as
$$
H^{b_n^F}_{\rm cusp}(S^{G_n}, \tM_{\mu}^{\sf v}) = \bigoplus_\Pi
\bigoplus_\epsilon  \epsilon \otimes \Pi_f, 
$$
where $\Pi$ runs over ${\rm Coh}(G_n, \mu^{\sf v})$ and $\epsilon$ runs over all possible characters of $\pi_0(K_{n,\infty})$. Hence, each $\Pi_f$ appears $2^{r_1}$ times in cuspidal cohomology in degree $b_n^F.$ 

\item If $n$ is odd then $H^{b_n^F}(\g_n, K_{n,\infty}^0;  \Pi_{\infty} \otimes  \M_{\mu,\C}^{\sf v})$ is one-dimensional and the character of $\pi_0(K_{n,\infty})$ which appears is denoted as $\epsilon_{\Pi_\infty}$. In this case, cuspidal cohomology decomposes as 
$$
H^{b_n^F}_{\rm cusp}(S^{G_n}, \tM_{\mu}^{\sf v}) = \bigoplus_\Pi \epsilon_{\Pi_\infty} \otimes \Pi_f, 
$$
where $\Pi$ runs over ${\rm Coh}(G_n, \mu^{\sf v})$; and each $\Pi_f$ appears once.  
\end{enumerate}
\end{cor}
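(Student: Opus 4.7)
The plan is to reduce the corollary to a place-by-place computation of $(\g_{n,v}, K_{n,v}^0)$-cohomology in the bottom degree using the explicit description of $\Pi_\infty$, then reassemble via K\"unneth and feed the result into (\ref{eqn:cusp-coh}).

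\textbf{Step 1: Local components and K\"unneth reduction.} Starting from (\ref{eqn:cusp-coh}) specialized to degree $b_n^F$, I only need to understand $H^{b_n^F}(\g_n, K_{n,\infty}^0; \Pi_\infty \otimes \M_{\mu,\C}^{\sf v})$ together with its $\pi_0(K_{n,\infty})$-action. As in the proof of Prop.\,\ref{prop:cuspidal-range}, Wigner's Lemma and K\"unneth break this into a tensor product over $v \in S_\infty$ of local pieces $H^{b_n^{F_v}}(\tilde{\g}_{n,v}, C_{n,v}^0; \Pi_v \otimes \M_{\mu_v,\C}^{\sf v})$; at the bottom degree only the degree-$0$ part of $\wedge^\bullet(\z_n/\s_n)$ contributes. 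Similarly $\pi_0(K_{n,\infty}) = \prod_{v\in S_r} \pi_0(K_{n,v})$ and the global action is the tensor of local actions. By Prop.\,\ref{prop:local-rep-n-even}, Prop.\,\ref{prop:local-rep-n-odd}, Prop.\,\ref{prop:local-rep-c} each $\Pi_v$ is explicitly known, so the problem is purely local.

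\textbf{Step 2: Complex places.} For $v \in S_c$, $\pi_0(K_{n,v}) = \pi_0(\Unit(n))$ is trivial, so the only point is the dimension. Since $\Pi_v \cong J_{\mu_v}$ is essentially tempered and fully induced from the Borel of $\GL_n(\C)$, its bottom-degree relative Lie-algebra cohomology is one-dimensional; no choice of sign is involved at complex places.

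\textbf{Step 3: Real places.} For $v \in S_r$ with $n$ even, $\Pi_v = J_{\mu_v}$ is the $(2,\dots,2)$-parabolic induction of $D(\ell_1)|\!\det\!|^{{\sf w}/2}\otimes\cdots\otimes D(\ell_{n/2})|\!\det\!|^{{\sf w}/2}$. The input stated in the excerpt is that each $\GL_2(\R)$ factor has $\dim H^1(\g_2, K_2^0; D_l\otimes \M^{\sf v})=2$ with \emph{both} characters of $\Orth(2)/\SO(2)$ occurring exactly once. Combining these via the Delorme / Vogan--Zuckerman description of cohomology of fully induced representations (equivalently, via the Hochschild--Serre / Osborne spectral sequence for $P\subset \GL_n(\R)$, which degenerates in bottom degree), one obtains that $H^{b_n^{\R}}(\gl_n, C_{n,v}^0; J_{\mu_v}\otimes \M_{\mu_v,\C}^{\sf v})$ is two-dimensional with both characters of $\pi_0(K_{n,v})\cong\Orth(n)/\SO(n)=\{\pm 1\}$ occurring once. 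For $v \in S_r$ with $n$ odd, $\Pi_v\cong J_{\mu_v}^{\epsilon_v}$ is induced from the $(2,\dots,2,1)$-parabolic; the extra character factor $\epsilon_v|\!\det\!|^{{\sf w}/2}$ rigidifies the argument, yielding a one-dimensional bottom-degree cohomology carrying a single distinguished character $\epsilon_{\Pi_v}$ of $\pi_0(K_{n,v})$, determined by $\epsilon_v$ together with the product of the signs contributed by the discrete-series factors.

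\textbf{Step 4: Assembly and main obstacle.} Tensoring the local computations over $v\in S_\infty$ and substituting into (\ref{eqn:cusp-coh}) gives the two displayed decompositions: for $n$ even, each $r_1$-tuple $\epsilon=(\epsilon_v)_{v\in S_r}$ occurs exactly once, so $\Pi_f$ appears with multiplicity $2^{r_1}$; for $n$ odd, the unique character is $\epsilon_{\Pi_\infty} = \otimes_v \epsilon_{\Pi_v}$. The main technical obstacle is the archimedean cohomology computation in Step 3 for $n>2$: one must verify not only the dimension but also the $\pi_0$-action on the bottom-degree cohomology of an induced representation. This is a standard but delicate point that reduces, through the parabolic induction, to the $\GL_2(\R)$ input already recorded in the excerpt together with a careful bookkeeping of how signs from the separate $\GL_2$ factors compose via the diagonal embedding $\Orth(2)^{n/2}\hookrightarrow \Orth(n)$ into a single $\pi_0(\Orth(n))$-character.
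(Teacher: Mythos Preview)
Your proposal is correct and follows essentially the same route the paper takes implicitly: the paper gives no separate proof of this corollary, treating it as an immediate consequence of the preceding archimedean analysis (the explicit descriptions of $J_{\mu_v}$, $J_{\mu_v}^{\epsilon_v}$, and the one-dimensionality statements recorded there, together with Clozel's Lemme~3.14) combined with the K\"unneth decomposition already used in the proof of Prop.\,\ref{prop:cuspidal-range}. Your write-up is in fact more explicit than the paper about how the $\pi_0$-action is tracked through the parabolic induction, which is exactly the point one must check.
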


As in \cite{raghuram-shahidi-imrn}, we say that an $r_1$-tuple of signs $\epsilon = (\epsilon_v)_{v\in S_r}$ is 
{\it permissible} for $\mu$ if $\epsilon = \epsilon_{\Pi_\infty}$ for some 
$\Pi \in {\rm Coh}(G_n, \mu^{\sf v})$ when $n$ is odd, and is any of the possible $2^{r_1}$ signatures when $n$ is even. 
For any $n$, given $\mu \in X^+_{00}(T_n),$ $\Pi \in {\rm Coh}(G_n, \mu^{\sf v})$, and given a permissible signature 
$\epsilon$ for $\mu$, the $\epsilon \otimes \Pi_f$ isotypic component in cuspidal cohomology in degree $b_n^F$, via the isomorphism in (\ref{eqn:cusp-coh-defn}), can be expressed in terms of the space 
$V_\Pi$ of cusp forms realizing $\Pi$ as 
$$
H^{b_n^F}_{\rm cusp}(S^{G_n}, \tM_{\mu,\C}^{\sf v})( \epsilon \otimes \Pi_f) \ = \ 
H^{b_n^F}(\g_n, K_{n,\infty}^0; V_\Pi \otimes  \M_{\mu,\C}^{\sf v})(\epsilon), 
$$
where the right hand side is the $\epsilon$-isotypic component in 
$H^{b_n^F}(\g_n, K_{n,\infty}^0; V_\Pi \otimes  \M_{\mu, \C}^{\sf v})$ for the action of $K_{n,\infty}/K_{n,\infty}^0$.

\subsubsection{\bf A numerical coincidence}
\label{sec:numerical}
We record a relation between the numbers $b_n^F$ and $b_{n-1}^F$ and the dimension of a locally symmetric space for $G_{n-1}$ which is crucial for giving a cohomological interpretation to the Rankin--Selberg theory for $\GL_n \times \GL_{n-1}$. Define
$$
\tilde{S}^{G_n}_{K_f} := G_n(\Q) \backslash G_n(\A)/ C_{n,\infty}^0 K_f \ = \  \GL_n(F) \backslash \GL_n(\A) / C_{n,\infty}^0 K_f
$$
where $K_f$ is an open compact subgroup of $G_n(\A_f)$, and 
$C_{n,\infty}^0 =   \prod_{v \in S_r} \SO(n) \times \prod_{v \in S_c} \Unit(n)$ 
is the connected component of the identity in the maximal compact subgroup $C_{n,\infty}$ of $G_n(\R).$ 
Since $K_{n, \infty}^0 = S_n({\mathbb R})^0C_{n,\infty}^0,$ see Sect.\,\ref{sec:group-infinity} for our notations, 
we get a canonical fibration $\phi$ given by: 
$$
\xymatrix{
\tilde{S}^{G_n}_{K_f} &  = &  G_n(\Q) \backslash G_n(\A)/ C_{n,\infty}^0 K_f \ar[d]^{\phi} \\ 
S^{G_n}_{K_f} & =  &  G_n(\Q) \backslash G_n(\A)/ K_{n,\infty}^0 K_f
}
$$

\begin{prop}
\label{prop:numerical}
Let $n \geq 2.$ 
Let $b_n^F$ be the bottom degree for $G_n$ as defined in Prop.\,\ref{prop:cuspidal-range}.  Then 
$$
b_n^F + b_{n-1}^F = {\rm dim}(\tilde{S}^{G_{n-1}}_{R_f}), 
$$
for any open compact subgroup $R_f$ of $G_{n-1}(\A_f)$. 
\end{prop}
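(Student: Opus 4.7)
The proof is essentially a direct dimension count, so my plan is to verify both sides of the claimed equality by computing them explicitly in terms of $r_1$, $r_2$, and $n$, and then observing that they agree.

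First I would compute the left-hand side. By definition, $b_n^F + b_{n-1}^F = r_1(b_n^\R + b_{n-1}^\R) + r_2(b_n^\C + b_{n-1}^\C)$. For the real contribution, I would split into cases $n=2k$ and $n=2k+1$ and use $b_n^\R = \lfloor n^2/4 \rfloor$; both cases yield $b_n^\R + b_{n-1}^\R = n(n-1)/2$ after elementary arithmetic. For the complex contribution, $b_n^\C + b_{n-1}^\C = \tfrac{n(n-1)}{2} + \tfrac{(n-1)(n-2)}{2} = (n-1)^2$. Hence
\[
b_n^F + b_{n-1}^F \;=\; r_1 \cdot \frac{n(n-1)}{2} \;+\; r_2 (n-1)^2.
\]

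Next I would compute the right-hand side. Since $\tilde{S}^{G_{n-1}}_{R_f} = G_{n-1}(\Q)\backslash G_{n-1}(\A)/C_{n-1,\infty}^0 R_f$ and $R_f$ is open compact (so contributes $0$ to the real dimension), the dimension equals $\dim G_{n-1}(\R) - \dim C_{n-1,\infty}^0$, independent of $R_f$. Using $\dim_\R \GL_{n-1}(\R) = (n-1)^2$, $\dim_\R \GL_{n-1}(\C) = 2(n-1)^2$, $\dim \SO(n-1) = (n-1)(n-2)/2$, and $\dim \Unit(n-1) = (n-1)^2$, I obtain
\[
\dim \tilde{S}^{G_{n-1}}_{R_f} \;=\; r_1\!\left[(n-1)^2 - \tfrac{(n-1)(n-2)}{2}\right] + r_2\!\left[2(n-1)^2 - (n-1)^2\right].
\]
The bracket for the real places simplifies to $\tfrac{(n-1)n}{2}$, and the complex bracket is $(n-1)^2$, so the right-hand side equals $r_1 \cdot n(n-1)/2 + r_2 (n-1)^2$, matching the left-hand side.

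I do not anticipate any real obstacle here: the content of the proposition is that the explicit formulas for $b_n^\R, b_n^\C$ from Proposition~\ref{prop:cuspidal-range} telescope against the dimensions of $\SO(n-1)$ and $\Unit(n-1)$ that arise in the real/complex places of $G_{n-1}$. The mild subtlety is merely keeping track of the parity split in $\lfloor n^2/4 \rfloor + \lfloor (n-1)^2/4 \rfloor$, which uniformly equals $n(n-1)/2$; once this is recorded, the rest is bookkeeping, and the independence from $R_f$ is built into the definition of $\tilde{S}^{G_{n-1}}_{R_f}$.
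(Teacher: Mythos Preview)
Your proposal is correct and follows essentially the same approach as the paper: both reduce to the local identities $b_n^\R + b_{n-1}^\R = \dim(\GL_{n-1}(\R)^0/\SO(n-1))$ and $b_n^\C + b_{n-1}^\C = \dim(\GL_{n-1}(\C)/U(n-1))$ and verify them by direct computation. You are slightly more explicit than the paper in carrying out the parity split for $\lfloor n^2/4\rfloor + \lfloor (n-1)^2/4\rfloor = n(n-1)/2$, which the paper leaves to the reader.
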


\begin{proof}
Note that 
\begin{eqnarray*}
{\rm dim}(\tilde{S}^{G_{n-1}}_{R_f}) & = & {\rm dim}(G_{n-1}(\R)^0/C_{n,\infty}^0) \\ 
& = & r_1 \cdot {\rm dim}(\GL_{n-1}(\R)^0/\SO(n-1)) \ + \ r_2 \cdot {\rm dim}(\GL_{n-1}(\C)/U(n-1)). 
\end{eqnarray*}
The proof follows if we check that 
\begin{eqnarray*}
b_n^\R + b_{n-1}^\R & = & {\rm dim}(\GL_{n-1}(\R)^0/\SO(n-1)), \\ 
b_n^\C + b_{n-1}^\C & = & {\rm dim}(\GL_{n-1}(\C)/U(n-1)).
\end{eqnarray*}
These are easy to verify using the definitions of $b_n^\R$ and $b_n^\C;$ for example, $b_n^\C + b_{n-1}^\C = n(n-1)/2 + (n-1)(n-2)/2 = (n-1)^2 = 
{\rm dim}(\GL_{n-1}(\C)/U(n-1)).$
\end{proof}

For $F = \Q$ and $n=3,$ this kind of numerology is apparent in Schmidt \cite{schmidt} and 
Mahnkopf \cite{mahnkopf-crelle}. 
For $F = \Q$ and general $n$, see Kazhdan, Mazur and Schmidt \cite[Table on p.\,100]{kms}; in this situation, the numerology was cleverly exploited by Mahnkopf \cite{mahnkopf-jussieu}, and so was used in my paper \cite{raghuram-imrn}. For general $n$ and $F$, this was recently used by Januszewski \cite{januszewski} in his study of modular symbols.

%\begin{rem}{\rm \label{rem:interpret} It is this proposition which allows us to give a cohomological interpretation to the critical values: take a cuspidal cohomology class $\vartheta_\Pi$ in lowest possible degree for $G_n$, and consider its pullback $\iota^*\vartheta_\Pi;$ similarly, take a cuspidal cohomology class $\vartheta_\Sigma$ in lowest possible degree for $G_{n-1}$, and consider its pullback $\phi^*\vartheta_\Sigma;$ both the $\iota^*\vartheta_\Pi$ and  $\phi^*\vartheta_\Sigma$ are on the manifold $\tilde{S}^{G_{n-1}}_{R_f}$ for a suitable level structure $R_f$; the wedge product of these two classes, i.e.,   $$\iota^*\vartheta_\Pi \wedge \phi^*\vartheta_\Sigma $$ is then a top-degree cohomology class on $\tilde{S}^{G_{n-1}}_{R_f}$. Since we started with cuspidal classes, this wedge product will be represented by a compactly supported differential form. We can apply Poincare duality, i.e., integrate it on all of $\tilde{S}^{G_{n-1}}_{R_f}$; the number we get will be the $L$-value we are interested in. }\end{rem}

\medskip
\subsubsection{\bf Critical points and compatibility of coefficient systems} 
\label{sec:critical-compatible}

\medskip
\paragraph{\bf Definition of the critical set} 

Consider the Rankin--Selberg $L$-function $L(s, \Pi \times \Sigma)$ where $\Pi$ (resp., $\Sigma$) is a cuspidal automorphic representation 
$\GL_n(\A_F)$ (resp., $\GL_{n-1}(\A_f)$). 

\begin{defn}
\label{defn:critical} 
We say that $s_0 = \tfrac12 + m \in \tfrac12 + \Z$ is critical for  $L(s, \Pi \times \Sigma)$ if both $L_\infty(s, \Pi_\infty \times \Sigma_\infty)$ and 
$L_\infty(1-s, \Pi_\infty^{\sf v} \times \Sigma_\infty^{\sf v})$ are regular at $s = s_0,$ i.e., both the $L$-factors at infinity on either side of the functional equation are holomorphic 
at $s=s_0.$ 
\end{defn}

By definition, we only look at half-integers, i.e, the critical set is a subset of $ \tfrac12 + \Z.$ This has to do with the so-called motivic normalization: that if a cuspidal representation $\Theta$ of $\GL_r$ corresponds to a motive $M$ then under this correspondence, $L(s + \tfrac{1-r}{2}, \Theta) = L(s, M)$. On the motivic side, one always looks for critical points amongst integers; see Deligne~\cite{deligne}. Transcribing to the automorphic side, one looks for critical points amongst $ \tfrac{r-1}2 + \Z;$ in particular, if $r$ is even then we look for critical points in $ \tfrac12 + \Z.$ In our situation, assuming Langlands's functoriality, we have $\Theta = \Pi \boxtimes \Sigma$, which is (usually) a cuspidal representation of $\GL_r(\A_F)$ with $r = n(n-1);$ in particular $r$ is even; hence the critical set for $L(s, \Pi \times \Sigma)$ consists of only half-integers. Another easy point to note is that given a particular half-integer $s_0 = \tfrac12 + m,$ to check whether $s_0$ is critical or not is an entirely local calculation because  
$L_\infty(s_0, \Pi_\infty \times \Sigma_\infty) = \prod_{v \in S_\infty} L_v(s_0, \Pi_v \times \Sigma_v)$ and local $L$-factors are nonvanishing everywhere.

\medskip
\paragraph{\bf Branching rule for the pair $(\GL_n(\C), \GL_{n-1}(\C))$}
Working with local notations, let $\mu = (\mu_1,\dots,\mu_n)$ be a dominant integral weight for 
$\GL_n(\C)$ and $\M_\mu$ 
be the irreducible finite-dimensional representation of the algebraic Lie group $\GL_n(\C)$ with highest weight $\mu.$ 
Similarly, we have $\lambda = (\lambda_1,\dots,\lambda_{n-1})$ and $\M_{\lambda}$ for $\GL_{n-1}(\C)$. The following branching rule is well-known (see, for example, Goodman-Wallach \cite[Thm.\,8.1.1]{goodman-wallach}): 

\begin{prop}
\label{prop:branching}
The representation $\M_{\lambda}$ appears in the restriction to $\GL_{n-1}(\C)$ of the representation $\M_{\mu},$ i.e., 
${\rm Hom}_{\GL_{n-1}(\C)}(\M_\lambda, \M_\mu) \neq 0$  
if and only if 
$$
\mu_1 \ \geq \ \lambda_1 \ \geq \ \mu_2 \ \geq \ \lambda_2 \ \geq \ \cdots \ \geq \ \lambda_{n-1} \ \geq \ \mu_n. 
$$
In this situation, $\M_\lambda$ appears with multiplicity one in $\M_\mu.$ The conditions on the weights $\mu$ and 
$\lambda$ captured by the above inequalities will be denoted 
$\mu \succ \lambda$, and we say $\mu$ interlaces $\lambda$. 
\end{prop}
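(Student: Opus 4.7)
The statement is the classical Weyl branching rule for the pair $(\GL_n(\C),\GL_{n-1}(\C))$, and my plan is to prove it at the level of characters via a Schur polynomial identity.

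By the Jacobi bialternant form of the Weyl character formula, the character of $\M_\mu$ on the diagonal torus of $\GL_n(\C)$ is the Schur polynomial
$$s_\mu(x_1,\ldots,x_n) \;=\; \frac{\det\bigl(x_i^{\mu_j+n-j}\bigr)_{1 \le i,j \le n}}{\det\bigl(x_i^{n-j}\bigr)_{1 \le i,j \le n}}.$$
Under the standard embedding $g \mapsto \mathrm{diag}(g,1)$ of $\GL_{n-1}(\C)$ into $\GL_n(\C)$, the character of $\M_\mu|_{\GL_{n-1}(\C)}$ evaluated on $\mathrm{diag}(x_1,\ldots,x_{n-1})$ is obtained by specializing $x_n=1$, yielding $s_\mu(x_1,\ldots,x_{n-1},1)$. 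Since the Schur polynomials $s_\lambda(x_1,\ldots,x_{n-1})$, indexed by length-$(n-1)$ dominant weights $\lambda$, form a $\Z$-basis of the ring of symmetric polynomials, the proposition reduces to establishing
$$s_\mu(x_1,\ldots,x_{n-1},1) \;=\; \sum_{\lambda\,:\,\mu \succ \lambda} s_\lambda(x_1,\ldots,x_{n-1}),$$
with each interlacing $\lambda$ contributing with coefficient exactly one.

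The heart of the argument is the refined bivariate identity
$$s_\mu(x_1,\ldots,x_{n-1},y) \;=\; \sum_{\lambda\,:\,\mu \succ \lambda} y^{|\mu|-|\lambda|}\, s_\lambda(x_1,\ldots,x_{n-1}),$$
which I would prove by expanding the numerator determinant $\det(x_i^{\mu_j+n-j})$ along its last row (the row of powers of $y$). Each surviving cofactor is, after extracting the Vandermonde $\det(x_i^{n-1-j})_{1 \le i,j \le n-1}$ from the denominator, a Schur polynomial $s_\lambda(x_1,\ldots,x_{n-1})$ whose indexing weight $\lambda$ is obtained by deleting one exponent from the strictly decreasing sequence $\mu_j+n-j$. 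Tracking which deletions yield another strictly decreasing sequence of exponents (equivalently, a dominant weight for $\GL_{n-1}$), along with the signs from the Laplace expansion, produces exactly the interlacing inequalities $\mu_1 \ge \lambda_1 \ge \mu_2 \ge \cdots \ge \lambda_{n-1} \ge \mu_n$, each appearing with coefficient $+1$. Setting $y=1$ then yields the character identity, and multiplicity one together with the interlacing characterization follows by linear independence of Schur polynomials.

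The main obstacle is the combinatorial sign bookkeeping in the Laplace expansion, verifying that every interlacing $\lambda$ contributes with coefficient $+1$ and no extraneous terms survive. A cleaner-looking alternative would be to invoke the Weyl unitary trick to reduce to the compact pair $(\Unit(n),\Unit(n-1))$ and apply the Weyl integration formula, but this ultimately reduces to the same determinantal identity, so the combinatorial core is unavoidable in some form.
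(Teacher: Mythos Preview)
The paper does not prove this proposition; it simply records it as the classical branching rule and cites Goodman--Wallach \cite[Thm.\,8.1.1]{goodman-wallach}. So there is no ``paper's proof'' to compare against, and supplying an argument is reasonable.

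Your overall strategy via characters and Schur polynomials is sound, and the target identity
\[
s_\mu(x_1,\ldots,x_{n-1},y) \;=\; \sum_{\lambda\,:\,\mu \succ \lambda} y^{|\mu|-|\lambda|}\, s_\lambda(x_1,\ldots,x_{n-1})
\]
is exactly the right thing to establish. However, the mechanism you describe for proving it has a genuine gap. A Laplace expansion of the numerator $\det(x_i^{\mu_j+n-j})$ along the last row produces exactly $n$ cofactors, one for each deleted column, whereas the number of interlacing weights $\lambda$ is $\prod_{i=1}^{n-1}(\mu_i-\mu_{i+1}+1)$, typically much larger. Concretely, for $n=2$ and $\mu=(\mu_1,\mu_2)$ the Laplace expansion yields two terms $x^{\mu_1+1}$ and $x^{\mu_2}$, but the interlacing $\lambda$ run over $\mu_2,\mu_2+1,\ldots,\mu_1$. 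What you have overlooked is that the full Vandermonde denominator is
\[
\det(x_i^{n-j})_{1\le i,j\le n} \;=\; \det(x_i^{\,n-1-j})_{1\le i,j\le n-1}\;\cdot\;\prod_{i=1}^{n-1}(x_i-y),
\]
and it is the division by the extra factor $\prod_{i=1}^{n-1}(x_i-y)$, not the Laplace expansion, that generates the sum over all interlacing $\lambda$. One standard way to handle this is to first subtract the last row from each of the first $n-1$ rows so that row $i$ has entries $x_i^{a_j}-y^{a_j}$ (with $a_j=\mu_j+n-j$), factor out $(x_i-y)$ from each such row to cancel the denominator, and only then expand; the resulting geometric sums $\sum_{l=0}^{a_j-1}x_i^{l}y^{a_j-1-l}$ are what produce the full interlacing range. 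Alternatively, the combinatorial proof via semistandard tableaux (grouping tableaux of shape $\mu$ by the horizontal strip occupied by the entries equal to $n$) gives the result more directly. Either route works, but the bare last-row expansion does not.
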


Remember that we dualized the coefficient systems. Let us restate the above proposition in the form that we will need it later on.  Given $\mu = (\mu_1,\dots,\mu_n)$ recall that $\mu^{\sf v} =  (-\mu_n,\dots,-\mu_1),$ and that the contragredient representation $\M_\mu^{\sf v}$ of $\M_\mu$ is  $\M_{\mu^{\sf v}}.$ 
Similarly, for $\lambda$ and  $\M_{\lambda}$.

\begin{cor}
\label{cor:branching}
${\rm Hom}_{\GL_{n-1}(\C)}(\M_\mu^{\sf v} \otimes \M_\lambda^{\sf v},  \C) \neq 0$  
if and only if $\mu^{\sf v} \succ \lambda$, i.e., 
$$
-\mu_n \ \geq \ \lambda_1 \ \geq \ -\mu_{n-1} \ \geq \ \lambda_2 \ \geq \ \cdots \ \geq \ \lambda_{n-1} \ \geq \ -\mu_1. 
$$
In this situation, ${\rm Hom}_{\GL_{n-1}(\C)}(\M_\mu^{\sf v} \otimes \M_\lambda^{\sf v},  \C)$ is a one-dimensional space. 
\end{cor}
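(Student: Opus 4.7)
The plan is to deduce Corollary~\ref{cor:branching} directly from Proposition~\ref{prop:branching} by using the standard tensor--Hom adjunction; the entire content is bookkeeping between a representation, its contragredient, and the explicit description $\mu^{\sf v} = (-\mu_n, \ldots, -\mu_1)$.

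First I would rewrite the $\mathrm{Hom}$-space using that a $\GL_{n-1}(\C)$-invariant bilinear pairing $V\otimes W \to \C$ is the same thing as a $\GL_{n-1}(\C)$-equivariant map $V \to W^{\sf v}$. Combined with reflexivity of finite-dimensional representations, this gives
\[
\mathrm{Hom}_{\GL_{n-1}(\C)}\!\bigl(\mathcal{M}_\mu^{\sf v}\otimes \mathcal{M}_\lambda^{\sf v},\,\C\bigr)
\;\cong\;
\mathrm{Hom}_{\GL_{n-1}(\C)}\!\bigl(\mathcal{M}_\lambda^{\sf v},\,(\mathcal{M}_\mu^{\sf v})^{\sf v}\bigr)
\;\cong\;
\mathrm{Hom}_{\GL_{n-1}(\C)}\!\bigl(\mathcal{M}_\lambda^{\sf v},\,\mathcal{M}_\mu\bigr).
\]
Dualizing once more (or equivalently swapping the two tensor factors), this is the same as
$\mathrm{Hom}_{\GL_{n-1}(\C)}(\mathcal{M}_\lambda,\,\mathcal{M}_\mu^{\sf v})$,
i.e.\ the multiplicity of $\mathcal{M}_\lambda$ in the restriction of $\mathcal{M}_\mu^{\sf v} = \mathcal{M}_{\mu^{\sf v}}$ to $\GL_{n-1}(\C)$.

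Next I would apply Proposition~\ref{prop:branching} to the pair $(\mu^{\sf v},\lambda)$: this multiplicity is nonzero, and in that case equal to~$1$, precisely when $\mu^{\sf v} \succ \lambda$. Unpacking the entries $\mu^{\sf v} = (-\mu_n, -\mu_{n-1},\ldots,-\mu_1)$ in the interlacing inequalities $\mu^{\sf v}_1 \geq \lambda_1 \geq \mu^{\sf v}_2 \geq \cdots \geq \lambda_{n-1} \geq \mu^{\sf v}_n$ gives
\[
-\mu_n \;\geq\; \lambda_1 \;\geq\; -\mu_{n-1} \;\geq\; \lambda_2 \;\geq\; \cdots \;\geq\; \lambda_{n-1} \;\geq\; -\mu_1,
\]
which is exactly the asserted condition. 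The one-dimensionality is then the multiplicity-one statement of Proposition~\ref{prop:branching}.

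There is no real obstacle here; the only point requiring care is to reconcile the two formulas for $(-)^{\sf v}$ on weights, namely $\mu^{\sf v} = -w_0(\mu)$ from Sect.~\ref{sec:sheaves} and the explicit listing $\mu^{\sf v} = (-\mu_n,\ldots,-\mu_1)$ used in the corollary. Since $w_0$ reverses the coordinates of $\mu$, these agree, and the deduction from Proposition~\ref{prop:branching} is immediate.
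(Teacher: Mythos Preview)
Your proof is correct and follows essentially the same approach as the paper: the paper's proof is the single line ${\rm Hom}_{\GL_{n-1}(\C)}(\M_\mu^{\sf v} \otimes \M_\lambda^{\sf v},  \C) = {\rm Hom}_{\GL_{n-1}(\C)}(\M_\mu^{\sf v}, \M_\lambda)$, which is exactly your tensor--Hom adjunction (applied to the other factor), after which Proposition~\ref{prop:branching} finishes the argument. Your extra dualizing step and the explicit unpacking of $\mu^{\sf v}$ are fine but not needed.
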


\begin{proof}
${\rm Hom}_{\GL_{n-1}(\C)}(\M_\mu^{\sf v} \otimes \M_\lambda^{\sf v},  \C) = 
{\rm Hom}_{\GL_{n-1}(\C)}(\M_\mu^{\sf v}, \M_\lambda).$
\end{proof}

\smallskip

We will also write $\M_{\lambda} \hookrightarrow \M_\mu$ to say that  $\M_\lambda$ appears (with multiplicity one) 
in $\M_\mu.$
Given $\lambda$ and an $m \in \Z$, the representation $\M_{\lambda} \otimes {\rm det}^m$ corresponds to the 
weight $\lambda + m$. In global notations; given 
a weight $\lambda = (\lambda^\iota)_{\iota \in \cE_F}$, the weight $\lambda + m$ is simply 
$(\lambda^\iota+m)_{\iota \in \cE_F}.$

\medskip
\paragraph{\bf Critical set and compatibility} 

The main result of this section is the following theorem which relates the crticial set to a condition on the coefficient systems. 

\begin{thm}
\label{thm:critical-compatible}
Let $\mu \in X^+_{00}(T_n)$ and $\Pi \in {\rm Coh}(G_n, \mu^{\sf v})$.  Similarly, let $\lambda \in X^+_{00}(T_{n-1})$ and $\Sigma \in {\rm Coh}(G_{n-1}, \lambda^{\sf v}).$
Assume that there is an integer $m_0$ such that 
$$
M_{\lambda + m_0} \hookrightarrow M_\mu^{\sf v}, \quad \mbox{that is} \quad \mu^{\sf v} \succ \lambda + m_0.
$$ 
Then we have  
$$
\left\{ m \in \Z \ : \mu^{\sf v} \succ \lambda + m \right\} \ \ = \ \ 
\left\{ m \in \Z \ : \ \tfrac12 + m \ \mbox{is critical for} \  L(s, \Pi \times \Sigma) \right\}. 
$$
(In particular, the critical set is a finite set.) 
\end{thm}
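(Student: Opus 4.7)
The plan is to reduce the theorem to a local archimedean statement at each place $v \in S_\infty$ and then to carry out an explicit analysis of the archimedean $L$-factors using the description of $\Pi_v$ and $\Sigma_v$ from Sect.\,\ref{sec:archimedean}. Since the global interlacing $\mu^{\sf v} \succ \lambda + m$ is by definition the conjunction over $v \in S_\infty$ of the local interlacings $\mu^{\sf v}_v \succ \lambda_v + m$, and since criticality of $\tfrac12 + m$ (Def.\,\ref{defn:critical}) is a purely local condition at each archimedean place---local $L$-factors having no zeros---it suffices to establish, for each $v \in S_\infty$ individually, the equality
$$
\{m \in \Z : \mu^{\sf v}_v \succ \lambda_v + m\} \ = \ \{m \in \Z : \tfrac12 + m \text{ is critical for } L_v(s, \Pi_v \times \Sigma_v)\}.
$$

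For $v \in S_r$, Propositions \ref{prop:local-rep-n-even}--\ref{prop:local-rep-n-odd} express $\Pi_v$ and $\Sigma_v$ as parabolic inductions from essentially discrete series of $\GL_2(\R)$, together with a single character factor on the odd-rank side. The archimedean $L$-factor thus decomposes as a product of factors $L(s, D(\ell) \times D(\ell'))$, each a product of two $\Gamma_{\C}$'s with shifts $(\ell+\ell')/2$ and $|\ell-\ell'|/2$, together with factors $L(s, D(\ell) \otimes \epsilon)$, each a single $\Gamma_{\C}$ independent of the sign $\epsilon$. Requiring regularity simultaneously at $s = \tfrac12 + m$ and $s = \tfrac12 - m$ produces explicit inequalities on $m$ which, upon translating the cuspidal parameters $\ell_i = \mu_i - \mu_{n-i+1} + n - 2i + 1$ back to the $\mu_i$'s, match the interlacing chain $-\mu_n \geq \lambda_1 + m \geq -\mu_{n-1} \geq \cdots \geq \lambda_{n-1} + m \geq -\mu_1$ term by term. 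This is in essence the Kasten--Schmidt calculation \cite[Thm.\,2.3]{kasten-schmidt}, together with the observation that the signs $\epsilon_v$ never enter the $L$-factor.

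For $v \in S_c$, Prop.\,\ref{prop:local-rep-c} and the cuspidal parameters (\ref{eqn:cuspidal-parameters-c}) give
$$
L_v(s, \Pi_v \times \Sigma_v) \ = \ \prod_{i=1}^{n} \prod_{j=1}^{n-1} \Gamma_{\C}\!\bigl(s + \max(a_i + a'_j,\ b_i + b'_j)\bigr).
$$
Using purity (so $b_i + b'_j = {\sf w} + {\sf w}' - (a_i + a'_j)$), regularity at $s = \tfrac12 + m$ together with regularity of the dual $L$-factor at $s = \tfrac12 - m$ translates, for every $(i,j)$, into the disjunction
$$
a_i + a'_j \ \geq \ \tfrac12({\sf w}+{\sf w}') + \tfrac12 + m \quad \text{or} \quad a_i + a'_j \ \leq \ \tfrac12({\sf w}+{\sf w}') - \tfrac12 - m.
$$
I would then argue in three stages, mirroring the announced structure of Lem.\,\ref{lem:1-2-crit}, Cor.\,\ref{cor:>implies-crit} and Prop.\,\ref{prop:crit-glnc}: (i) show that if $\mu^{\iota_v, {\sf v}} \succ \lambda^{\iota_v} + m$ holds then every disjunction above is satisfied, with the corresponding interlacing at $\bar\iota_v$ coming for free via the purity symmetry ${\sf a} \leftrightarrow {\sf w} - {\sf a}$; (ii) establish that the critical set at $v$ is the set of integer points of an interval, by a monotonicity-in-$m$ argument anchored at the $m_0$ furnished by the hypothesis; (iii) show that the endpoints of this interval coincide with the extreme $m$'s satisfying the interlacing, by tracking the moment at which each disjunction fails as $m$ varies.

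The main obstacle is step (iii) at complex places: the product has $n(n-1)$ disjunctive inequalities whereas the interlacing is a chain of only $2(n-1)$ consecutive inequalities, so one must show that the weaker-looking disjunctions collectively force the tighter chain. The nonemptiness hypothesis on $\{m : \mu^{\sf v} \succ \lambda + m\}$ is essential to anchor the analysis in a nondegenerate regime and, in particular, to rule out the vacuous case in which both sides happen to be empty for unrelated reasons. Once the $v$-local equivalences are in place, the global theorem follows by intersecting over all $v \in S_\infty$; finiteness of the critical set is automatic from the fact that the interlacing inequalities bound $m$ on both sides.
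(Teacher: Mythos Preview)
Your reduction to archimedean places and the appeal to \cite[Thm.\,2.3]{kasten-schmidt} for $v\in S_r$ match the paper exactly, and for $v\in S_c$ your three-stage plan (i)--(iii) mirrors Lem.\,\ref{lem:1-2-crit}, Cor.\,\ref{cor:>implies-crit} and Prop.\,\ref{prop:crit-glnc}. Two slips and one missing idea, however, keep the complex case from going through as written.

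The slips: your displayed disjunction is not the criticality condition. With $x=a_i+a'_j$ and $W={\sf w}+{\sf w}'$, regularity of $L_v(s,\Pi_v\times\Sigma_v)$ at $\tfrac12+m$ together with regularity of $L_v(1-s,\Pi_v^{\sf v}\times\Sigma_v^{\sf v})$ there amounts to $|x-\tfrac{W}{2}|\geq\tfrac12+|m+\tfrac{W}{2}|$, not $|x-\tfrac{W}{2}|\geq\tfrac12+m$. And the interlacing at $\bar\iota_v$ does \emph{not} come for free from that at $\iota_v$: purity lets you rewrite the $\bar\iota_v$-chain in $\iota_v$-coordinates, but the result is a second, genuinely independent family of inequalities---the second row of~(\ref{eqn:mu>lambda})---and both rows are needed below.

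The substantive gap is your step (iii). Framing criticality as $n(n-1)$ disjunctions and asking that they ``collectively force the tighter chain'' is why it looks like an obstacle. The paper's move in Prop.\,\ref{prop:crit-glnc} is instead to use the hypothesis $\mu^{\sf v}\succ\lambda$ \emph{first}: the two interlacing rows~(\ref{eqn:mu>lambda}) together give $\mu_i+\lambda_j\leq 0$ and $\mu_i+\lambda_j\leq W$ whenever $i+j\geq n+1$, and the reverse inequalities when $i+j\leq n$. This fixes the sign of every $x_{ij}-\tfrac{W}{2}$ a priori, so each absolute value in the $\Gamma$-arguments resolves and the disjunctions collapse to single linear inequalities in $m$. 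Criticality then becomes a conjunction whose binding constraints, as $j$ runs over $1,\dots,n-1$, are exactly the quantities $-\mu_{n+1-j}-\lambda_j$, $\mu_{n-j}+\lambda_j-W$ and their companions defining $m^\pm_{\mu,\lambda}$---the same numbers that bound the interlacing set. Once the signs are resolved, your (ii) and (iii) are immediate and no separate monotonicity argument is needed.
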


\begin{proof}
Let us begin by noting that the assertion is of a purely local nature. If $m \in \Z$ then $\mu^{\sf v} \succ \lambda + m$ if and only if for every $v \in S_\infty$ we have 
$\mu_v^{\sf v} \succ \lambda_v + m.$ On the other hand, $\tfrac12 + m$ is critical for $L(s, \Pi \times \Sigma)$ if and only if both 
$L_v(s, \Pi_v \times \Sigma_v)$ and $L_v(1-s, \Pi_v^{\sf v} \times \Sigma_v^{\sf v})$ are holomorphic at $s = \tfrac12 + m$ for  every $v \in S_\infty.$
We will consider the real and complex places separately. 

\smallskip

For $v \in S_r$, this has been observed by Kasten and Schmidt \cite[Theorem 2.3]{kasten-schmidt}. 
What is remarkable is that their observation for a real place in fact goes through for a complex place also; see also 
Grobner--Harris \cite[Lem.\,4.7]{grobner-harris}. 
We very briefly summarize the proof of \cite[Theorem 2.3]{kasten-schmidt}, so that the reader may compare the similarities and the differences in the combinatorics in the real and complex cases. Fix a real place $v \in S_r$ and we drop it from the notations. 
The hypothesis $\mu^{\sf v} \succ \lambda$ means 
\begin{equation}
\label{eqn:mu>lambda-real-case} 
-\mu_n \ \geq \ \lambda_1 \ \geq \ \mu_{n-1} \ \geq \ \dots \ \geq \ \lambda_{n-1} \ \geq \ -\mu_1.
\end{equation}
If $\ell$ and $\ell'$ are the cuspidal parameters of $\mu$ and $\lambda$ (see \ref{sec:repn-glnR}) then define the cuspidal width between $\Pi$ and $\Sigma$ as: 
$$
c_{\Pi, \Sigma} \ := \ {\rm min} \{|\ell_i - \ell_j'| \ : \ 1\leq i \leq n, 1\leq j \leq n-1\}.
$$
Verify that $s = 1/2$ is critical if and only if 
\begin{equation}
\label{eqn:cuspidal-width}
-(c_{\Pi, \Sigma} - 1) \ \leq \ ({\sf w} + {\sf w}') \ \leq \ c_{\Pi, \Sigma} - 1. 
\end{equation} 
Verify that (\ref{eqn:mu>lambda-real-case}) $\implies$ (\ref{eqn:cuspidal-width}), i.e., $\mu^{\sf v} \succ \lambda$ implies that $s=1/2$ is critical. During this verification one observes that the condition $\mu^{\sf v} \succ \lambda$ in fact implies 
$$
\ell_1 \ > \ \ell'_1 \ > \ \ell_2 \ > \ \ell_2' \ > \ \cdots  \ > \ \ell_{n-1}' \ > \ \ell_n'.
$$
These inequalities are then used to verify that suppose $\mu^{\sf v} \succ \lambda+m_0$ for some $m_0$ and $\tfrac12 + m$ is critical for $L(s, \Pi \times \Sigma)$ then $\mu^{\sf v} \succ \lambda+m.$ For more details the reader is referred to \cite{kasten-schmidt}. Now we take the complex case, which is combinatorially more complicated.

\medskip

For the rest of this proof, we assume $v \in S_c$, i.e, that $v$ is a complex place.
Since we have now fixed $v \in S_c$, we drop it and use local notations as in \ref{sec:glnc}.
Recall: $\mu = (\mu^\iota, \mu^{\bar{\iota}})$ with 
$\mu^\iota = (\mu_1, \dots, \mu_n)$ and $\mu^{\bar{\iota}} = (\mu_1^* \dots, \mu_n^*);$ the purity condition gives 
$\mu_i^* + \mu_{n-i+1} = {\sf w},$ where ${\sf w}$ is the purity weight of the global $\mu$; the cuspidal parameters are:  
$$
(a_1,\dots,a_n) = \left(\mu_1+\tfrac{n-1}{2},\dots, \mu_n - \tfrac{(n-1)}{2}\right) \ \ {\rm and} \ \ 
(b_1,\dots,b_n) = \left({\sf w} - a_1, \dots, {\sf w} - a_n \right);
$$ 
the local representation 
$\Pi_v = J_\mu := \textrm{Ind}\left( z^{a_1}\bar{z}^{b_1} \otimes \dots  \otimes z^{a_n}\bar{z}^{b_n}\right).$
Note that  $a_i + b_i = {\sf w}$   and $a_i - b_i = 2\mu_i - {\sf w} + n -2i + 1.$ 
Similarly, we have $\lambda = (\lambda^\iota, \lambda^{\bar{\iota}})$ with 
$\lambda^\iota = (\lambda_1, \dots, \lambda_{n-1})$ and $\lambda^{\bar{\iota}} = (\lambda_1^* \dots, \lambda_{n-1}^*);$ purity gives 
$\lambda_j^* + \lambda_{n-j} = {\sf w}',$ where ${\sf w}' = {\sf w}(\lambda)$ is the purity weight of $\lambda$; the cuspidal parameters are:  
$$
(a_1',\dots,a_{n-1}') = \left(\lambda_1+\tfrac{n-2}{2}, \dots, \lambda_{n-1} - \tfrac{(n-2)}{2}\right) \ \ {\rm and} \ \ 
(b_1',\dots,b_{n-1}') = ({\sf w}' - a_1', \dots, {\sf w}' - a_{n-1}');
$$ 
the local representation $\Sigma_v = J_\lambda := 
\textrm{Ind}\left( z^{a_1'}\bar{z}^{b_1'}  \otimes \dots  \otimes z^{a_{n-1}'}\bar{z}^{b_{n-1}'}\right).$
Note that  $a_j' + b_j' = {\sf w}'$   and $a_j' - b_j' = 2\lambda_j' - {\sf w}' + n -2j.$ 

\smallskip

It suffices to prove the theorem for a pair $(\Pi, \Sigma)$ such that $\mu^{\sf v} \succ \lambda.$ This is because, suppose we have a pair $(\Pi_1, \Sigma_1)$ such that 
$\mu_1^{\sf v} \succ \lambda_1 + m_0$ for an integer $m_0$, then we can consider $\Pi = \Pi_1$ and $\Sigma = \Sigma_1 \otimes |\ |^{m_0};$ for these representations the weights are $\mu = \mu_1$ and $\lambda = \lambda_1 + m_0.$  It is easy to see that if the theorem holds for $(\Pi, \Sigma)$ then it holds for $(\Pi_1, \Sigma_1)$. Henceforth we assume therefore that $\mu^{\sf v} \succ \lambda.$ We have: 

\begin{lem}
\label{lem:1-2-crit}
If $\mu^{\sf v} \succ \lambda$ then $s = \tfrac12$ is critical for $L(s, \Pi \times \Sigma)$. 
\end{lem}

\begin{proof}
The hypothesis $\mu^{\sf v} \succ \lambda$, means $\mu^{\iota \sf v} \succ \lambda^\iota$ and 
$\mu^{\bar{\iota}{\sf v}} \succ \lambda^{\bar{\iota}}.$ By Cor.\,\ref{cor:branching} and the formulae for 
$\mu_i^* = {\sf w} -\mu_{n-i+1}$ and $\lambda_j^* = {\sf w}' -\lambda_{n-j}$ we have: 
\begin{equation}\label{eqn:mu>lambda}
\begin{array}{ccccccccccccc}
-\mu_n & \geq & \lambda_1 & \geq & -\mu_{n-1} & \geq & \lambda_2 & \geq & \cdots & \geq & \lambda_{n-1} & \geq & 
-\mu_1, \\
 &&&&&&&&&&&& \\
{\sf w} -\mu_n & \geq & \lambda_1- {\sf w}' & \geq & {\sf w} -\mu_{n-1} & \geq & \lambda_2 - {\sf w}' & \geq & \cdots 
& \geq & \lambda_{n-1} - {\sf w}' & \geq & {\sf w} -\mu_1. 
\end{array}
\end{equation}

Using the formulae in Knapp \cite[Sect.\,4]{knapp}, for any complex place $v$ as above, we have
$$
L_v \left(s, \Pi_v \times \Sigma_v\right) \ = \ \prod_{i,j} L\left(s, z^{a_i+a_j'}\bar{z}^{b_i+b_j'} \right) \ \sim \ 
\prod_{i,j} \Gamma\left(s + \frac{a_i+a_j'+b_i+b_j'}{2} + \frac{|a_i - b_i + a_j'- b_j'|}{2} \right), 
$$
where, by $\sim$, we mean up to a nonzero exponential factor which is irrelevant in computing the critical set. 
In terms of the data going in to $\mu$ and $\lambda$, we have: 
\begin{gather}\label{eqn:l-fn-crit-1}
L_v \left(s, \Pi_v \times \Sigma_v\right) 
= \prod_{1\leq i,j \leq n} \Gamma\left(s + \frac{{\sf w} + {\sf w}'}{2} + 
\frac{|2\mu_i + 2\lambda_j+2n-2i-2j+1-({\sf w} + {\sf w}')|}{2} \right), \\
L_v \left(s, \Pi_v^{\sf v}\times \Sigma_v^{\sf v} \right) 
= \prod_{1 \leq i,j \leq n} \Gamma\left(s - \frac{{\sf w} + {\sf w}'}{2} + 
\frac{|2\mu_i + 2\lambda_j+2n-2i-2j+1-({\sf w} + {\sf w}')|}{2} \right). \label{eqn:l-fn-crit-2}
\end{gather}
It follows that $s = 1/2$ is critical if every factor on the right hand side above is regular at $s=1/2,$ i.e., if both the inequalities 
\begin{gather}\label{eqn:crit-1}
\frac{1+ ({\sf w} + {\sf w}')+ |2\mu_i + 2\lambda_j+2n-2i-2j+1-({\sf w} + {\sf w}')|}{2} \geq 1, \\ \smallskip
\frac{1 - ({\sf w} + {\sf w}') + |2\mu_i + 2\lambda_j+2n-2i-2j+1-({\sf w} + {\sf w}')|}{2}  \geq 1.
\label{eqn:crit-2}
\end{gather}
are satisfied. Consider two cases: \\

\noindent
{\bf Case 1:} $i+j \geq n+1$. By (\ref{eqn:mu>lambda}), we have 
$\mu_i+\lambda_j \leq 0$ and $\mu_i+\lambda_j \leq {\sf w}+{\sf w}'.$ Hence
$$
|2\mu_i + 2\lambda_j+2n-2i-2j+1-({\sf w} + {\sf w}')| \ = \ 
-2\mu_i - 2\lambda_j-2n+2i+2j-1+({\sf w} + {\sf w}').
$$
For a given $j$, this quantity is minimized if $i = n+1-j.$ Hence (\ref{eqn:crit-1}) is satisfied since 
$$
\frac{1+ ({\sf w} + {\sf w}') -2\mu_{n+1-j} - 2\lambda_j + 1 + ({\sf w} + {\sf w}')}{2} \  = \  
-\mu_{n+1-j}-\lambda_j + ({\sf w} + {\sf w}') + 1 \ \geq \ 1. 
$$
Similarly, (\ref{eqn:crit-2}) is satisfied since 
$$
\frac{1- ({\sf w} + {\sf w}') -2\mu_{n+1-j} - 2\lambda_j + 1 + ({\sf w} + {\sf w}')}{2} \  = \  
1-\mu_{n+1-j}-\lambda_j \ \geq \ 1. 
$$

\noindent
{\bf Case 2:} $i+j \leq n$. By (\ref{eqn:mu>lambda}), we have 
$\mu_i+\lambda_j \geq 0$ and $\mu_i+\lambda_j \geq {\sf w}+{\sf w}'.$ Hence
$$
|2\mu_i + 2\lambda_j+2n-2i-2j+1-({\sf w} + {\sf w}')| \ = \ 
2\mu_i + 2\lambda_j+2n-2i-2j+1-({\sf w} + {\sf w}'). 
$$
For a given $j$, this quantity is minimized if $i = n-j.$ Hence (\ref{eqn:crit-1}) is satisfied since 
$$
\frac{1+ ({\sf w} + {\sf w}') +2\mu_{n-j} + 2\lambda_j + 1 - ({\sf w} + {\sf w}')}{2} \  = \  
1+\mu_{n-j} + \lambda_j \geq \ 1. 
$$
Similarly, (\ref{eqn:crit-2}) is satisfied since 
$$
\frac{1- ({\sf w} + {\sf w}') + 2\mu_{n-j} + 2\lambda_j + 1 - ({\sf w} + {\sf w}')}{2} \  = \  
1+\mu_{n-j}+\lambda_j - ({\sf w} + {\sf w}')\ \geq \ 1. 
$$
\end{proof}

\begin{cor}
\label{cor:>implies-crit}
$$
\left\{ m \in \Z \ : \mu^{\sf v} \succ \lambda + m \right\} \ \ \subset \ \ 
\left\{ m \in \Z \ : \ \tfrac12 + m \ \mbox{is critical for} \  L(s, \Pi \times \Sigma) \right\}. 
$$
\end{cor}

\begin{proof}
If $m$ is such that $\mu^{\sf v} \succ \lambda + m$ then by Lem.\,\ref{lem:1-2-crit}, $s = 1/2$ is critical for 
$L(s, \Pi \times \Sigma(m))$ where $\Sigma(m) = \Sigma \otimes |\ |^m.$ But, 
$L(\tfrac12, \Pi \times \Sigma(m)) = L(\tfrac12 +m , \Pi \times \Sigma).$ 
\end{proof}

\medskip

\begin{prop}
\label{prop:crit-glnc}
Assume $\mu^{\sf v} \succ \lambda.$ Define the numbers: 
\begin{eqnarray*}
m^+_{\mu,\lambda} & := & 
{\rm min}\left\{ \{-\mu_{n+1-j} - \lambda_j : 1 \leq j \leq n-1\} \ \cup\  
\{\mu_{n-j} + \lambda_j - ({\sf w} + {\sf w}') :  1 \leq j \leq n-1 \}  \right\},  \\ 
m^-_{\mu,\lambda} & := & 
{\rm max}\left\{ \{-\mu_{n-j} - \lambda_j : 1 \leq j \leq n-1\} \ \cup \ 
\{\mu_{n+1-j} + \lambda_j - ({\sf w} + {\sf w}') :  1 \leq j \leq n-1 \}  \right\}.
\end{eqnarray*} 
Observe that $m^-_{\mu,\lambda} \leq 0 \leq m^+_{\mu,\lambda}.$ We have 
\begin{enumerate}
\item $\left\{ m \in \Z \ : \mu^{\sf v} \succ \lambda + m \right\} = \left\{ m \in \Z \ :  m^-_{\mu,\lambda} \leq m \leq m^+_{\mu,\lambda} \right\}$; and 

\item $\left\{ m \in \Z \ : \ \tfrac12 + m \ \mbox{is critical for} \  L(s, \Pi \times \Sigma) \right\} = \left\{ m \in \Z \ :  m^-_{\mu,\lambda} \leq m \leq m^+_{\mu,\lambda} \right\}$.
\end{enumerate}
\end{prop}

\begin{proof}
That $m^-_{\mu,\lambda} \leq 0 \leq m^+_{\mu,\lambda}$ follows  from (\ref{eqn:mu>lambda}).  To prove (1), suppose $\mu^{\sf v} \succ \lambda + m$, then applying (\ref{eqn:mu>lambda}) to the case of 
$\lambda + m$ we get: 
{\small
$$\label{eqn:mu>lambda+m}
\begin{array}{ccccccccccccc}
-\mu_n & \geq & \lambda_1+m & \geq & -\mu_{n-1} & \geq & \lambda_2 +m& \geq & \cdots & \geq & 
\lambda_{n-1} + m& \geq & 
-\mu_1, \\
 &&&&&&&&&&&& \\
{\sf w} -\mu_n & \geq & \lambda_1- {\sf w}' -m& \geq & {\sf w} -\mu_{n-1} & \geq & \lambda_2 - {\sf w}' -m & \geq & 
\cdots & \geq & \lambda_{n-1} - {\sf w}' -m & \geq & {\sf w} -\mu_1. 
\end{array}
$$}
From these inequalities, we deduce for all $1 \leq j \leq n-1$: 
\begin{equation}
\begin{array}{ccccc}
\mu_{n+1-j}+\lambda_j - {\sf w}-{\sf w}' & \leq & m & \leq & -\mu_{n+1-j} - \lambda_j,   \\
& & & & \\
-\mu_{n-j} - \lambda_j & \leq & m & \leq & \mu_{n-j}+\lambda_j - {\sf w}-{\sf w}'. 
\end{array}
\end{equation}
Hence $m^-_{\mu,\lambda} \leq m \leq m^+_{\mu,\lambda}$. This entire chain of reasoning is reversible, which proves (1). 

\smallskip

To prove (2), from Cor.\,\ref{cor:>implies-crit} and (1), it suffices to show that if $\tfrac12 + m$ is critical then the integer 
$m$ satisfies: $m^-_{\mu,\lambda} \leq m \leq m^+_{\mu,\lambda}$. Suppose then that $\tfrac12 + m$ is critical, then 
from (\ref{eqn:l-fn-crit-1}) and (\ref{eqn:l-fn-crit-2}) it follows that 
\begin{gather}\label{eqn:crit-1/2+m-1}
m+ \frac{1+ ({\sf w} + {\sf w}')+ |2\mu_i + 2\lambda_j+2n-2i-2j+1-({\sf w} + {\sf w}')|}{2} \geq 1, \\ \smallskip
-m+\frac{1 - ({\sf w} + {\sf w}') + |2\mu_i + 2\lambda_j+2n-2i-2j+1-({\sf w} + {\sf w}')|}{2}  \geq 1.
\label{eqn:crit-1/2+m-2}
\end{gather}
As in the proof of Lem.\,\ref{lem:1-2-crit} consider two cases: \\

\noindent
{\bf Case 1:} $i+j \geq n+1$. The hypothesis $\mu^{\sf v} \succ \lambda$ gives the inequalities 
(\ref{eqn:mu>lambda}) from which we get as before 
$\mu_i+\lambda_j \leq 0$ and $\mu_i+\lambda_j \leq {\sf w}+{\sf w}'.$ Hence
$$
|2\mu_i + 2\lambda_j+2n-2i-2j+1-({\sf w} + {\sf w}')| \ = \ 
-2\mu_i - 2\lambda_j-2n+2i+2j-1+({\sf w} + {\sf w}').
$$
In this case, (\ref{eqn:crit-1/2+m-1}) holds if and only if $m \geq \mu_{n+1-j} + \lambda_j - ({\sf w}+{\sf w}')$, and 
(\ref{eqn:crit-1/2+m-2}) holds if and only if $m \leq -\mu_{n+1-j}-\lambda_j.$ 

\medskip

\noindent
{\bf Case 2:} $i+j \leq n$. In this case, exactly as above, 
(\ref{eqn:crit-1/2+m-1}) and (\ref{eqn:crit-1/2+m-2}) hold if and only if 
$-\mu_{n-j}-\lambda_j \leq m \leq \mu_{n-j}+\lambda_j-({\sf w}+{\sf w}')$. 

\medskip

Putting both cases together, we see that $\tfrac12+m$ is critical implies that 
$m^-_{\mu,\lambda} \leq m \leq m^+_{\mu,\lambda},$ which concludes the proof of the proposition.   
\end{proof}
This also concludes the proof of Thm.\,\ref{thm:critical-compatible}.
\end{proof}

Let's revert to global notations, and for future reference, record the set of all critical points: 
\begin{cor} 
Let 
$\mu \in X^+_{00}(T_n)$ (resp., $\lambda \in X^+_{00}(T_{n-1})$) and $\Pi \in {\rm Coh}(G_n, \mu^{\sf v})$  
(resp., $\Sigma \in {\rm Coh}(G_{n-1}, \lambda^{\sf v})$). Assume $\mu^{\sf v} \succ \lambda.$
Define the numbers: 
\begin{enumerate}
\item For $v \in S_r$ which corresponds to $\iota_v \in \cE_F$: 
  \begin{itemize}
   \item $m^+_{\mu^{\iota_v},\lambda^{\iota_v}} = 
  {\rm min}_{1 \leq j \leq n-1} \left\{-\mu_{n+1-j}^{\iota_v} - \lambda_j^{\iota_v}\right\}$; 
  
  \item $m^-_{\mu^{\iota_v},\lambda^{\iota_v}} := 
   {\rm max}_{1 \leq j \leq n-1} \left\{-\mu_{n-j}^{\iota_v} - \lambda_j^{\iota_v}  \right\}$. 
 \end{itemize}

\item For $v \in S_c$ which corresponds to a pair of embeddings $\{\iota_v \bar{\iota}_v\}$: 
  
  \begin{itemize}
   \item $m^+_{\mu^{\iota_v},\lambda^{\iota_v}} = 
  {\rm min} \left\{ \{-\mu_{n+1-j}^{\iota_v} - \lambda_j^{\iota_v} \}_{1 \leq j \leq n-1} \ \cup\  
  \{\mu_{n-j}^{\iota_v} + \lambda_j^{\iota_v} - ({\sf w} + {\sf w}')  \}_{1 \leq j \leq n-1} \right\}$; 
  
  \item $m^-_{\mu^{\iota_v},\lambda^{\iota_v}} := 
   {\rm max} \left\{ \{-\mu_{n-j}^{\iota_v} - \lambda_j^{\iota_v}\}_{1 \leq j \leq n-1} \ \cup \ 
  \{\mu_{n+1-j}^{\iota_v} + \lambda_j^{\iota_v} - ({\sf w} + {\sf w}') \}_{1 \leq j \leq n-1}  \right\}$. 
  \end{itemize}

\item Now define
  \begin{itemize}
  \item $m^+(\mu,\lambda) := {\rm min}\{v \in S_\infty : m^+_{\mu^{\iota_v},\lambda^{\iota_v}} \}$; 
  \item $m^-(\mu,\lambda) := {\rm max}\{v \in S_\infty : m^-_{\mu^{\iota_v},\lambda^{\iota_v}} \}$. 
\end{itemize}
\end{enumerate}
Then, $\tfrac12 + m$ is critical for $L(s, \Pi \times \Sigma)$ if and only if 
$m^-(\mu,\lambda) \leq m \leq m^+(\mu,\lambda).$ 
\end{cor}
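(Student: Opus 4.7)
My plan is to reduce the assertion to a purely local statement at each archimedean place, use Theorem~\ref{thm:critical-compatible} and Proposition~\ref{prop:crit-glnc} as the local inputs, and then assemble the global bounds. The whole argument is bookkeeping rather than new mathematics.

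First I would observe that criticality is archimedeanly local. By Definition~\ref{defn:critical}, $\tfrac12+m$ is critical for $L(s,\Pi\times\Sigma)$ exactly when both $L_\infty(s,\Pi_\infty\times\Sigma_\infty)$ and $L_\infty(1-s,\Pi_\infty^{\sf v}\times\Sigma_\infty^{\sf v})$ are holomorphic at $s=\tfrac12+m$. Since $L_\infty=\prod_{v\in S_\infty}L_v$ and local $L$-factors are nowhere vanishing, this is equivalent to the pointwise condition that for every $v\in S_\infty$ both $L_v(\tfrac12+m,\Pi_v\times\Sigma_v)$ and $L_v(\tfrac12-m,\Pi_v^{\sf v}\times\Sigma_v^{\sf v})$ be regular. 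So it suffices to identify the local critical set at each archimedean place.

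Next I would treat the two types of places separately, using Theorem~\ref{thm:critical-compatible}. At a real place $v\in S_r$, the Kasten--Schmidt half of Theorem~\ref{thm:critical-compatible} identifies the local critical set with $\{m\in\Z:\mu_v^{\sf v}\succ\lambda_v+m\}$; unpacking this interlacing via Corollary~\ref{cor:branching} produces the family of inequalities $-\mu^{\iota_v}_{n-j}-\lambda^{\iota_v}_j\leq m\leq -\mu^{\iota_v}_{n+1-j}-\lambda^{\iota_v}_j$ for $1\leq j\leq n-1$, which is precisely $m^-_{\mu^{\iota_v},\lambda^{\iota_v}}\leq m\leq m^+_{\mu^{\iota_v},\lambda^{\iota_v}}$ in the notation of part~(1). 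At a complex place $v\in S_c$, Proposition~\ref{prop:crit-glnc} directly delivers the local critical set in the desired interval form; the purity relations $\mu^{\bar\iota_v}_i={\sf w}-\mu^{\iota_v}_{n+1-i}$ and $\lambda^{\bar\iota_v}_j={\sf w}'-\lambda^{\iota_v}_{n-j}$ are what allow the bounds coming from the $\bar\iota_v$-block of the weights to be rewritten purely in terms of $\mu^{\iota_v}$ and $\lambda^{\iota_v}$, giving the second family of terms (those with the shift $-({\sf w}+{\sf w}')$) in part~(2).

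Finally I would assemble: combining the local conditions, $\tfrac12+m$ is critical if and only if $m^-_{\mu^{\iota_v},\lambda^{\iota_v}}\leq m\leq m^+_{\mu^{\iota_v},\lambda^{\iota_v}}$ holds simultaneously for every $v\in S_\infty$, and intersecting these intervals over $v$ yields exactly $m^-(\mu,\lambda)\leq m\leq m^+(\mu,\lambda)$ as defined in part~(3). I do not expect any genuine obstacle, since all the analytic content is already in Theorem~\ref{thm:critical-compatible} and Proposition~\ref{prop:crit-glnc}; the only step requiring a little care is the translation at complex places, where one must apply purity to see that the bounds coming from the $\bar\iota_v$-data agree with the second family of terms appearing in part~(2).
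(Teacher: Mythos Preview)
Your proposal is correct and is exactly the argument the paper intends: the corollary is stated there without proof, as a direct record of what Theorem~\ref{thm:critical-compatible} (for real places) and Proposition~\ref{prop:crit-glnc} (for complex places) give once one intersects the local intervals over all $v\in S_\infty$. One small remark: at a complex place, Proposition~\ref{prop:crit-glnc} already presents the bounds in the form appearing in part~(2), so the purity conversion you mention is built into that proposition rather than something you need to perform separately.
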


\smallskip
\paragraph{\bf Consequences of compatibility of coefficient systems - I} 

Let us consider the hypothesis that there is an integer $m_0$ such that 
$M_{\lambda + m_0} \hookrightarrow M_\mu^{\sf v},$ in the statement of Thm.\,\ref{thm:critical-compatible}. 
Suppose $F = \Q.$
Observe that this hypothesis always satisfied for $\GL_2 \times \GL_1$; take $\lambda_1 = -\mu_2$. But for $n \geq 3$ without that hypothesis, the conclusion of  
Theorem~\ref{thm:critical-compatible} need not hold. For example, consider $\GL_3 \times \GL_2$ over $\Q$. Take $\mu = (0,0,0)$; then 
${\sf w} = 0$; $\ell = (2,0,-2)$. Take $\lambda = (1,-1)$; then ${\sf w}' = 0$; $\ell' = (3,-3).$ Then $C_{\Pi, \Sigma} = 1;$ hence $\tfrac12$ is critical since 
$1-  C_{\Pi, \Sigma} \leq {\sf w} + {\sf w}' \leq C_{\Pi, \Sigma} - 1.$ However, there is no integer $m$ such that $M_{\lambda + m} \hookrightarrow M_\mu^{\sf v}.$ In other words, there are Rankin--Selberg $L$-funcitons 
$L(s, \Pi \times \Sigma)$ which have critical values, but the underlying sheaves are not compatible; in such situations we are unable to say anything about these critical values.
Furthermore, the methods of this paper say that when the sheaves are compatible, 
we can prove a theorem for every critical value, and only for critical values, but we are unable to say anything about noncritical values.

\smallskip
\paragraph{\bf Consequences of compatibility of coefficient systems - II} \label{sec:subtlety} 
When the base field $F$ has a complex place, there is yet another consequence of the hypothesis that 
$\mu^{\sf v} \succ \lambda + m.$ This issue is already seen by working with $\GL_2 \times \GL_1.$
A consequence of the discussion to follow will be that our Thm.\,\ref{thm:main} in the case $\GL_2 \times \GL_1$ is exactly Hida's \cite[Thm.\,I]{hida-duke} for critical values of type $\emptyset.$ We will follow Hida~\cite{hida-duke} for most of \ref{sec:subtlety}. 

Suppose $M$ is a pure motive of rank $2$ over $F$ with coefficients in a field $E.$ 
Recall that $\cE_F = \Hom(F,\C)$. Let's write $\cE_r = \Hom(F,\R) \subset \cE_F$ for the set of real embeddings, and 
$\cE_c = \cE \setminus \cE_r$ for the set of complex embeddings. The Hodge types of $M$ are of the form 
$$
\cH_\iota(M) = \{(p_\iota, q_{\bar\iota}), (q_\iota, p_{\bar\iota})\}, \quad \forall \iota \in \cE_F.
$$
The set of all Hodge types is denoted $\cH(M) = \cup_\iota \cH_\iota(M).$ 
Make the following assumptions: 
\begin{itemize}
\item (Purity) There exists $w$ such that $p_{\bar\iota} + q_\iota = q_{\bar\iota} + p_\iota = w, \forall \iota \in \cE_F.$ 
\item (No middle Hodge type) $(p,q) \in \cH(M) \Longrightarrow p \neq q.$ 
\item (Regularity) $p_\iota \neq q_\iota$ for all $\iota \in \cE_F,$ hence all nonzero Hodge numbers are $1.$ Without loss of generality we may also assume that $p_\iota > q_\iota$ for all $\iota.$
\end{itemize}
 The motive conjecturally attached to $\pi \in \Coh(\GL_2/F, \mu)$ satisfies the above properties; this motive is proven to exist by Scholl \cite{scholl} for $F = \Q$ and by Blasius--Rogawski \cite{blasius-rogawski}  for $F$ totally real. 

Now let $\chi$ be an algebraic Hecke character of $F$, and suppose the infinity type of $\chi$ is written as 
$\infty(\chi) = j = (j_\iota)_{\iota}.$ Let $M(\chi)$ be the rank $1$ motive attached to $\chi.$ 
The Hodge types of $\bM = M \otimes M(\chi)$ are of the form: 
$$
\cH_\iota(\bM) = \{(p_\iota - j_\iota, q_\iota-j_\iota), (q_\iota-j_\iota, p_\iota-j_\iota)\}, \quad \forall \iota \in \cE_F.
$$ 
Recall the $\Gamma$-factors for $\bM$, defined as: 
$$
L_\infty(s, \bM) = \prod_\iota L_\iota(s, \bM), \ \ {\rm where} \ \ L_\iota(s, \bM) = 
\prod_{(p,q) \in \cH_\iota(\bM), \, p < q} \Gamma_\C(s-p), 
$$
and $\Gamma_\C(s) = 2(2\pi)^{-s}\Gamma(s).$ For any subset $S$ of $\cE_F$, for example $S$ could be 
$\{\iota, \bar\iota\}$ for a fixed complex embedding $\iota$, we let 
$L_S(s, \bM) = \prod_{\iota \in S} L_\iota(s, \bM).$ Let $\bM^{\sf v}$ be the dual motive; note that 
$(p,q) \in \cH(\bM) \iff (-p,-q) \in \cH(\bM^{\sf v}).$ We say $\bM$ is critical if both $L_\infty(s, \bM)$ and 
$L_\infty(1-s, \bM^{\sf v})$ are finite at $s=0.$ 
Suppose, $\bM = M \otimes M(\chi)$ is critical, then we deduce: 
\begin{enumerate}
\item $\iota \in \cE_r$, $q_\iota+1 \leq j_\iota \leq p_\iota.$
\item $\iota \in \cE_c$, then one and only one of the following three cases are possible: 
   \begin{enumerate}
   \item $p_\iota-j_\iota < q_{\bar\iota}-j_{\bar\iota}.$ Then we also have $q_\iota-j_\iota < p_{\bar\iota}-j_{\bar\iota}.$ 
   Looking at the contribution from $L_{\{\iota, \bar\iota\}}(s, \bM)$ towards criticality, we get the inequalities 
   $j_\iota \geq p_\iota+1$ and $j_{\bar\iota} \leq q_{\bar\iota}.$ 
   \item $p_{\bar\iota} -j_{\bar\iota} < q_\iota-j_\iota.$ This is similar to (a) with $\bar\iota$ instead of $\iota.$ 
   \item $p_\iota-j_\iota > q_{\bar\iota}-j_{\bar\iota}$ and $p_{\bar\iota} -j_{\bar\iota} > q_\iota-j_\iota$. In this case we see 
   that criticality gives $q_\iota+1 \leq j_\iota \leq p_\iota,$ which is similar to the case of a real embedding.  
   \end{enumerate}
\end{enumerate}
Therefore, if $\bM$ is critical then there exist sets $A$ and $T$ with $A \subset \cE_c$ and $\cE_r \subset T$ such that 
$\cE_F = T \cup A \cup \bar A $ as a disjoint union; furthermore, 
$$
\iota \in T \iff q_\iota+1 \leq j_\iota \leq p_\iota, \quad \quad
\iota \in A \iff j_\iota \geq p_\iota+1 \ {\rm and} \ j_{\bar\iota} \leq q_{\bar\iota}.
$$
In this situation, following Hida, we say {\it $\bM$ is critical of type $A$}, or that $s=0$ is critical of type $A$ for 
$L(s, \bM).$ The nature of the algebraicity results for a critical value depends on this set $A$; compare the two different parts of Hida~\cite[Theorem I]{hida-duke}. 
We now show that in our situation, the compatibility of coefficient systems implies that we are looking at critical points of type $A = \emptyset,$ the empty set. In the proposition below, $M(1) = M \otimes \Q(1)$ denotes a Tate twist, which is a harmless artifice resulting from the vagaries of the so-called motivic normalization. 

\begin{prop}
Let $\pi \in \Coh(G_2, \mu^{\sf v})$ and $\chi \in \Coh(G_1, \lambda^{\sf v}).$ 
Let $M(\pi)$ (resp., $M(\chi)$) be the motive attached to $\pi$ (resp., $\chi$). 
Suppose $\mu^{\sf v} \succ \lambda,$ then $(M(\pi) \otimes M(\chi))(1)$ is critical of type $\emptyset.$ 
\end{prop}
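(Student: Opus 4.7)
My strategy is to push the compatibility hypothesis $\mu^{\sf v} \succ \lambda$ through into the Hodge-theoretic trichotomy described in \ref{sec:subtlety} and verify directly that every complex embedding of $F$ falls into the ``real-like'' case (c), so that the set $A$ is empty. Throughout, write $\bM := M(\pi) \otimes M(\chi)$.

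First I would spell out the relevant Hodge numerology. For $\pi \in \Coh(G_2, \mu^{\sf v})$, the conjecturally attached motive $M(\pi)$ is regular and pure of weight ${\sf w}(\mu)+1$, with
$$\cH_\iota(M(\pi)) \ = \ \{(\mu_1^\iota + 1,\, \mu_2^{\bar{\iota}}),\ (\mu_2^\iota,\, \mu_1^{\bar{\iota}} + 1)\}$$
at each embedding $\iota \in \cE_F$; in the notation of \ref{sec:subtlety} this means $p_\iota = \mu_1^\iota + 1$ and $q_\iota = \mu_2^\iota$, and the purity condition on $\mu$ becomes the purity of $M(\pi)$. For $\chi \in \Coh(G_1, \lambda^{\sf v})$, the sign convention dictated by dualising the coefficient system forces the infinity type of the Hecke character $\chi$ to be $j_\iota = -\lambda^\iota$ at every $\iota$. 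Finally, the Tate twist $(1)$ shifts each Hodge weight by $(-1,-1)$; equivalently, it replaces $j_\iota$ by $j'_\iota := -\lambda^\iota + 1$ in the formulas of \ref{sec:subtlety}.

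Next I would verify criticality of $s=0$ for $L(s, \bM(1))$ together with the complex-place trichotomy. For a real embedding $\iota$, the condition $q_\iota + 1 \leq j'_\iota \leq p_\iota$ unfolds to $-\mu_1^\iota \leq \lambda^\iota \leq -\mu_2^\iota$, which is precisely the real-place content of $\mu^{\sf v} \succ \lambda$. For a complex place $v$, case (c) demands the strict inequalities
$$p_\iota - j'_\iota \ > \ q_{\bar{\iota}} - j'_{\bar{\iota}} \quad {\rm and} \quad p_{\bar{\iota}} - j'_{\bar{\iota}} \ > \ q_\iota - j'_\iota.$$
Unfolding the first (the second is symmetric), and using the purity identity $\mu_2^{\bar{\iota}} = {\sf w}(\mu) - \mu_1^\iota$, reduces the inequality to
$$(\mu_1^\iota + \lambda^\iota) \ + \ (-\mu_2^{\bar{\iota}} - \lambda^{\bar{\iota}}) \ + \ 1 \ > \ 0,$$
and both parenthesised terms are non-negative by $\mu^{\sf v} \succ \lambda$ applied at $\iota$ and at $\bar{\iota}$ respectively. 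The crucial point is that both embeddings of the pair $\{\iota, \bar{\iota}\}$ must be exploited simultaneously; using only one yields merely $\mu_1^\iota + \lambda^\iota \geq 0$, which is in general insufficient. Once case (c) is established at every complex place, criticality at $s=0$ follows from the same inequalities together with the real-place analysis, and one concludes that $A = \emptyset$.

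The main obstacle is really just the bookkeeping of conventions, namely the sign relating $j_\iota$ to $\lambda^\iota$ and the direction of the Tate shift, since the proposition is otherwise a purely combinatorial manipulation of Hodge numbers. Granting the Hodge types of $M(\pi)$ (known unconditionally for $F$ totally real via Blasius--Rogawski and for $F = \Q$ via Scholl), no further input from the motive beyond its regularity and purity is required. In particular, the argument shows that over a base field with complex places the hypothesis $\mu^{\sf v} \succ \lambda$ not only ensures criticality but actually pins the critical point down as being of Hida type $\emptyset$, which is the underlying reason that Thm.\,\ref{thm:main} specialised to $\GL_2 \times \GL_1$ takes the simpler shape that it does.
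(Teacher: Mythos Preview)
Your approach is essentially the same as the paper's: write out the Hodge types of $\bM(1)$ in terms of $\mu$ and $\lambda$, then use the compatibility inequalities $-\mu_1^\iota \leq \lambda^\iota \leq -\mu_2^\iota$ at both members of a conjugate pair $\{\iota,\bar\iota\}$ to pin down the complex-place trichotomy. The only structural difference is that the paper argues by contradiction, showing that case (a) (and symmetrically (b)) would force $0 \leq -\mu_2^\iota - \lambda^\iota < -\mu_1^{\bar\iota} - \lambda^{\bar\iota} - 1 \leq -1$, whereas you verify case (c) directly; these are trivially equivalent.

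There is, however, a genuine bookkeeping discrepancy you should be aware of. The paper, following Hida, takes $p_\iota = -\mu_2^\iota + 1$, $q_\iota = -\mu_1^\iota$, and the infinity type of $\chi$ to be $j_\iota = +\lambda^\iota$ (this last point is stated explicitly in \ref{sec:explicate-gl2-gl1}: if $\chi \in \Coh(G_1,\lambda^{\sf v})$ then $\chi = \chi^\circ \otimes |\ |^\lambda$). Your Hodge types $p_\iota = \mu_1^\iota + 1$, $q_\iota = \mu_2^\iota$ are the ones arising from Clozel's normalization of $M(\pi)$, which is fine, but your claim $j_\iota = -\lambda^\iota$ is the opposite sign to what the paper's conventions give. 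The two sign flips happen to conspire so that your case-(c) inequality
\[
(\mu_1^\iota + \lambda^\iota) + (-\mu_2^{\bar\iota} - \lambda^{\bar\iota}) + 1 > 0
\]
has exactly the same structure as the correct one, and the compatibility hypothesis dispatches it identically. So the argument is sound, but you should either adopt the paper's Hida-normalized Hodge data throughout or else justify carefully why your pair of conventions is internally consistent; as written, the claim ``dualising the coefficient system forces $j_\iota = -\lambda^\iota$'' is not correct in the paper's framework.
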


\begin{proof}
Suppose $\mu = (\mu^\iota)$ with $\mu^\iota = (\mu^\iota_1, \mu^\iota_2)$, then by comparing our coefficient system
$\M_\mu^{\sf v}$ with the coefficient system $L(\kappa; \C)$ in Hida~\cite[p.\,433]{hida-duke} and from the recipe for Hodge types of $M(\pi)$ in terms of the coefficient system as in \cite[Conj.\,0.1, (i)]{hida-duke}, we see that
$p_\iota = -\mu^\iota_2 + 1, q_\iota = -\mu^\iota_1.$
Also, the infinity type of $\chi$ is given by $j := \infty(\chi) = \lambda.$ Applying the preceding discussion, 
and the well-known fact about Tate twists: if $(p,q) \in \cH(M)$ then $(p-1,q-1) \in \cH(M(1)),$
we see that the Hodge types of $\bM := (M(\pi) \otimes M(\chi))(1)$ are given by
$
\cH_\iota(\bM) = \{(-\mu^\iota_2 -\lambda^\iota, -\mu^{\bar\iota}_1-\lambda^{\bar\iota}-1), 
(-\mu^\iota_1-\lambda^\iota-1, -\mu^{\bar\iota}_2-\lambda^{\bar\iota}) \}.
$
Compatibility of the coefficient systems ($\mu^{\sf v} \succ \lambda$) translates to:  
$
-\mu_2^\iota \geq \lambda^\iota \geq -\mu^\iota_1, \quad \forall \iota \in \cE_F. 
$
It is easy to see that these inequalities preclude the possibilities in (2)(a) and (2)(b) above. For example, for (2)(a), we will need $-\mu^\iota_2 -\lambda^\iota < -\mu^{\bar\iota}_1-\lambda^{\bar\iota}-1$, but from the above inequalities under compatibility we will then have: $0 \leq -\mu^\iota_2 -\lambda^\iota < -\mu^{\bar\iota}_1-\lambda^{\bar\iota}-1 \leq -1,$
which is absurd. Similarly, (2)(b) is not possible. It is easy now to see that $\bM$ is critical of type $\emptyset.$
\end{proof}

\medskip
\subsection{\bf The proof of Theorem~\ref{thm:main}}

\smallskip
\subsubsection{\bf The main idea behind a cohomological interpretation of $L(\tfrac12, \Pi \times \Sigma)$}
\label{sec:main-idea}
We interpret the Rankin--Selberg integral $I(\tfrac12, \phi_{\Pi}, \phi_{\Sigma})$ 
in terms of Poincar\'e duality. More precisely, the vector $w_{\Pi_f}$ will correspond to a cohomology class 
$\vartheta_{\Pi}$ in degree $b_n^F$, and similarly $w_{\Sigma_f}$ will correspond to a class $\vartheta_{\Sigma}$ in degree $b_{n-1}^F$. These classes, after dividing by certain periods, have good rationality properties. 
Pull back $\iota^*\vartheta_{\Pi}$ along 
the proper map $\iota: \tilde{S}^{G_{n-1}} \to S^{G_n}$, and wedge (or cup) with $\vartheta_{\Sigma}$, to give a top degree class (by Prop.\,\ref{prop:numerical}) on $\tilde{S}^{G_{n-1}}$ with coefficients in a tensor product sheaf. Now if the constituent sheaves are compatible ($\mu^{\sf v} \succ \lambda$), which by \ref{sec:critical-compatible} is the same as saying $s=1/2$ is critical, then we get a top-degree class on $\tilde{S}^{G_{n-1}}$ with constant coefficients. 
Apply Poincar\'e duality, i.e., fix an orientation on $\tilde{S}^{G_{n-1}}$ and integrate. One realizes then that this is 
essentially the Rankin--Selberg integral in Prop.\,\ref{prop:rankin-selberg}. Interpreting that integral, and hence the critical $L$-value $L_f(\tfrac12, \Pi \times \Sigma)$ in cohomology, permits us to study it's arithmetic properties, since 
Poincar\'e duality is Galois equivariant. All this may be summarized in the diagram: 

{\small
$$
\xymatrix{
\Whit(\Pi_f) \times \Whit(\Sigma_f) \ar[rr] \ar[dd]&   &   
H^{b_n^F}(\g_n, K_{n,\infty}^0; V_\Pi \otimes  \M_{\mu,\C}^{\sf v})(\epsilon) \times 
H^{b_{n-1}^F}(\g_{n-1}, K_{n-1,\infty}^0; V_\Sigma \otimes  \M_{\lambda,\C}^{\sf v})(\eta) 
\ar@{^{(}->}[d] \\ 
&  & H^{b_n^F}_{\rm cusp}(S^{G_n}, \tM_{\mu,\C}^{\sf v}) \times 
H^{b_{n-1}^F}_{\rm cusp}(S^{G_{n-1}}, \tM_{\lambda,\C}^{\sf v}) 
\ar@{^{(}->}[d] \\
\C &  & H^{b_n^F}_!(S^{G_n}, \tM_{\mu,\C}^{\sf v}) \times 
H^{b_{n-1}^F}_!(S^{G_{n-1}}, \tM_{\lambda,\C}^{\sf v}) 
\ar[d]^{\iota^* \times \phi^*} \\
&  & H^{b_n^F}_!(\tilde{S}^{G_{n-1}},  \iota^*\tM_{\mu,\C}^{\sf v}) \times 
H^{b_{n-1}^F}(\tilde{S}^{G_{n-1}}, \phi^*\tM_{\lambda,\C}^{\sf v}) 
\ar[d]^{ \wedge} \\
H^{{\rm top}}_!(\tilde{S}^{G_{n-1}}, \C) \ar[uu]_{\int}
& & H^{d_{n-1}^F}_!(\tilde{S}^{G_{n-1}}, \iota^*\tM_{\mu,\C}^{\sf v} \otimes  \phi^*\tM_{\lambda,\C}^{\sf v}) \ar[ll]_{\T^*}
}
$$}

\subsubsection{\bf Review of certain periods and period relations} 
We briefly review the definition of certain periods attached to $\Pi \in {\rm Coh}(G_n, \mu)$, and their behavior 
upon twisting by $\Pi$ by algebraic Hecke characters. The reader is referred to my paper with Shahidi \cite{raghuram-shahidi-imrn} for all the details.

\medskip
\paragraph{\bf Comparing Whittaker models and cohomological representations}
\label{sec:comparison}
Given any $\Pi \in {\rm Coh}(G_n, \mu^{\sf v})$ and a permissible signature $\epsilon$ for $\Pi$, 
fix  a generator $[\Pi_{\infty}]^{\epsilon}$ of the one-dimensional space 
$H^{b_n^F}(\g_n ,K_{n, \infty}^0; \Whit(\Pi_{\infty}) \otimes \M_{\mu,\C}^{\sf v})(\epsilon).$
We have the following comparison isomorphism: 
\begin{equation*}
\mathcal{F}^{\epsilon}_{\Pi} \ : \ \Whit(\Pi_f) \ \longrightarrow \ 
H^{b_n^F}(\g_n ,K_{n, \infty}^0; V_{\Pi} \otimes \M_{\mu,\C}^{\sf v})(\epsilon). 
\end{equation*}
See \cite[Sect.\,3.3]{raghuram-shahidi-imrn}. 

\medskip
\paragraph{\bf Rationality fields and rational structures} 
Given $\mu \in X^+_{00}(T_n)$ and $\Pi \in \Coh(G_n, \mu^{\sf v})$, define the number field 
$\Q(\mu)$ as in \ref{sec:fin-dim-repn}, and the rationality field $\Q(\Pi_f)$ as in \cite[Sect.\,7.1]{grobner-raghuram-ijnt}. 
Now define $\Q(\Pi)$ as the compositum of $\Q(\mu)$ and $\Q(\Pi_f)$. 
The Whittaker model $\Whit(\Pi_f)$ of the finite part of $\Pi$ admits a $\Q(\Pi_f)$-structure, and hence a 
$\Q(\Pi)$-structure; see \cite[Sect.\,3.2]{raghuram-shahidi-imrn}. This rational structure is generated by normalized new vectors. 
The cohomological model $H^{b_n^F}(\g_n ,K_{n, \infty}^0; V_{\Pi} \otimes \M_{\mu,\C}^{\sf v})(\epsilon)$ admits a 
$\Q(\Pi)$-structure arising from purely geometric considerations, since the sheaf $\tM_{\mu, \C}$ has a 
$\Q(\mu)$-structure $\tM_{\mu, \Q(\mu)};$ see \cite[Sect.\,3.3]{raghuram-shahidi-imrn}.

\medskip
\paragraph{\bf Definition of the periods}
The isomorphism $\mathcal{F}^{\epsilon}_{\Pi}$ need not preserve rational structures on either side. Each side is an irreducible representation space for the action of $G_n(\A_f)$ and rational structures being unique up to homotheties, we can adjust the isomorphism $\mathcal{F}^{\epsilon}_{\Pi}$ by a scalar--which is the period--so as to preserve rational structures. There is a nonzero complex number
$p^{\epsilon}(\Pi)$ attached to the datum $(\Pi_f, \epsilon, [\Pi_{\infty}]^{\epsilon})$ such that the normalized map
$
\mathcal{F}^{\epsilon}_{\Pi,0}:= 
p^{\epsilon}(\Pi)^{-1}
\mathcal{F}^{\epsilon}_{\Pi}$
is ${\rm Aut}({\mathbb C})$-equivariant, i.e., the following diagram commutes:
\begin{equation}
\label{eqn:period-defn-diag}
\xymatrix{
\Whit(\Pi_f) \ar[rrr]^-{\mathcal{F}^{\epsilon}_{\Pi,0}} \ar[d]_{\sigma} & & &
H^{b_n^F}(\g_n ,K_{n, \infty}^0; V_{\Pi}\otimes \M_{\mu,\C}^{\sf v})(\epsilon)
\ar[d]^{\sigma} \\
\Whit({}^{\sigma}\Pi_f) 
\ar[rrr]^-{\mathcal{F}^{{}^{\sigma}\!\epsilon}_{{}^{\sigma}\Pi,0}} 
& & &
H^{b_n^F}(\g_n ,K_{n, \infty}^0; V_{{}^{\sigma}\Pi}\otimes \M_{{}^{\sigma}\!\mu, \C}^{\sf v})
(\epsilon)
}
\end{equation}
The complex number $p^{\epsilon}(\Pi)$ is well-defined up to multiplication by elements of ${\mathbb Q}(\Pi)^\times$. 
The collection $\{p^{\epsilon}({}^\sigma\Pi) : \sigma \in \autc \}$ is well-defined in 
$(\Q(\Pi) \otimes \C)^\times/\Q(\Pi)^\times.$ In terms of the un-normalized maps, we can write the above commutative diagram as
\begin{equation}
\label{eqn:unnormalized}
\sigma \circ \mathcal{F}_{\Pi}^{\epsilon} = 
\left(\frac{\sigma(p^{\epsilon}(\Pi))}
{p^{\epsilon}({}^{\sigma}\Pi)} \right) 
\mathcal{F}_{{}^{\sigma}\!\Pi}^{\epsilon} \circ \sigma.
\end{equation}
See \cite[Def./Prop.\,3.3]{raghuram-shahidi-imrn}.

\medskip
\paragraph{\bf Behaviour of the periods under twisting}\label{sec:period-relaitons}
Let $\xi$ be an algebraic Hecke character with signature $\epsilon_\xi$ as in \ref{sec:algebraic-hecke}, 
and Gau\ss~sum $\G(\xi_f)$ as in \ref{sec:gauss}. Then \cite[Thm.\,4.1]{raghuram-shahidi-imrn} says that
\begin{equation}
\label{eqn:period-reln}
\sigma\left(
\frac{p^{\epsilon \cdot \epsilon_{\xi}}(\Pi_f\otimes\xi_f)}
{\mathcal{G}(\xi_f)^{n(n-1)/2}\,p^{\epsilon}(\Pi_f)} \right) 
\ = \ 
\left(\frac{p^{\epsilon\cdot\epsilon_{\xi}}({}^{\sigma}\Pi_f\otimes {}^{\sigma}\!\xi_f)}
{\mathcal{G}({}^{\sigma}\!\xi_f)^{n(n-1)/2}\,p^{\epsilon}({}^{\sigma}\Pi_f)} \right). 
\end{equation}

\medskip
\paragraph{\bf Remarks on \cite{raghuram-shahidi-imrn}}
\label{sec:errata}
We take this opportunity to correct a small mistake in the definition of periods given in 
\cite[Def./Prop.\,3.3]{raghuram-shahidi-imrn}, which is that 
the right vertical arrow in 
(\ref{eqn:period-defn-diag}) above was in \cite{raghuram-shahidi-imrn} erroneously taken as mapping into the ${}^\sigma\!\epsilon$-isotypic component; this arrow is exactly the $T_\sigma^\bullet$ in (\ref{eqn:t-sigma-m-x-repn}) which maps 
$\Pi_f \otimes \epsilon$ to ${}^\sigma\Pi_f \otimes \epsilon.$ It should be noted that, in general, ${}^\sigma\!\epsilon$ is not even defined, since $\sigma$ need not stabilize the set of real embeddings of $F$. 

\smallskip

Next, in \cite[Thm.\,4.1]{raghuram-shahidi-imrn}, we used the signature $\epsilon_{{}^\sigma\!\xi}.$ This is fine, but is potentially misleading, unless the reader realizes that when the base field $F$ has a real place (it is only then that we have these signatures to worry about), for any algebraic Hecke character such as $\xi$, we have  
$$
\epsilon_\xi \ = \ \epsilon_{{}^\sigma\!\xi}.
$$ 
This may be seen by showing, exactly as in Gan--Raghuram~\cite[(3.7)]{gan-raghuram}, that for any archimedean place $v$ we have $({}^\sigma\!\xi)_v = \xi_v$; in {\it loc.\,cit.\,}the base field was totally real, but the same argument works, 
if $F$ has at least one real place: by purity (see \ref{sec:algebraic-hecke}) 
we have $\xi = |\ |^{\sf w}\cdot \xi^0$ for an integer ${\sf w}$ 
and a finite order character $\xi^0$, which implies that 
${}^\sigma\!\xi = |\ |^{\sf w} \cdot \sigma \circ \xi^0.$ Since $\xi^0_v$ is a finite order character of $\R^\times$ or 
$\C^\times$, we conclude that it is quadratic, hence $\sigma$-invariant, whence 
$({}^\sigma\!\xi)_v = \xi_v.$

\medskip
\subsubsection{\bf The cohomology classes and the global pairing}

%\vspace{2mm}

\paragraph{\bf The cohomology classes}
Recall our choice of Whittaker vectors $w_\Pi = w_{\Pi_f} \otimes w_{\Pi_\infty} \in \Whit(\Pi, \psi)$ and 
$w_\Sigma = w_{\Sigma_f} \otimes w_{\Sigma_\infty} \in \Whit(\Sigma, \psi^{-1})$ from \ref{sec:whittaker}. Define
\begin{equation}
\label{eqn:coh-classes}
\vartheta^\epsilon_{\Pi} \ := \  \mathcal{F}_{\Pi}^{\epsilon}(w_{\Pi_f}) \quad {\rm and} \quad 
\vartheta^\eta_{\Sigma} \ := \  \mathcal{F}_{\Sigma}^{\eta}(w_{\Sigma_f}). 
\end{equation}
These classes are transcendental, whereas the normalized classes 
\begin{equation}
\label{eqn:coh-classes-0}
\vartheta^\epsilon_{\Pi, 0} \ := \  \mathcal{F}_{\Pi,0}^{\epsilon}(w_{\Pi_f}) = p^{\epsilon}(\Pi_f)^{-1}\vartheta^\epsilon_{\Pi}
\quad {\rm and} \quad 
\vartheta^\eta_{\Sigma, 0} \ := \  \mathcal{F}_{\Sigma,0}^{\eta}(w_{\Sigma_f}) = 
p^\eta(\Sigma_f)^{-1}\vartheta^\eta_{\Sigma}. 
\end{equation}
are rational, i.e., 
$\vartheta^\epsilon_{\Pi, 0} \in H^{b_n^F}_!(S^{G_n}, \tM_{\mu}^{\sf v})$ and 
$\vartheta^\eta_{\Sigma, 0} \in H^{b_{n-1}^F}_!(S^{G_{n-1}}, \tM_{\lambda}^{\sf v}).$

\medskip
\paragraph{\bf Compatibility of the sheaves and the map $\cT$.}
The hypothesis  (\ref{eqn:comp}) on the weights $\mu$ and $\lambda$, implies via 
Thm.\,\ref{thm:critical-compatible} that the critical set is non-empty. In particular, $s=1/2$ is critical, and 
hence $\mu^{\sf v} \succ \lambda.$ Apply the branching rule Cor.\,\ref{cor:branching} at every archimdean place; for $v \in S_\infty$, fix a nonzero 
$
\cT_v \ \in \ {\rm Hom}_{\GL_{n-1}(F_v)}(\M_{\mu_v}^{\sf v} \otimes \M_{\lambda_v}^{\sf v}, 1\!\!1), 
$
where $1\!\!1$ is the trivial representation. Define
\begin{equation}
\otimes_{v \in S_\infty} \cT_v \ =: \ \cT  \in 
{\rm Hom}_{G_{n-1}}(\M_{\mu}^{\sf v} \otimes \M_{\lambda}^{\sf v}, 1\!\!1).
\end{equation}

\medskip
\paragraph{\bf The orientation class} 
Consider $\tilde{S}^{G_{n-1}}_{R_f},$ for an open-compact subgroup $R_f \in G_{n-1}(\A_f),$ as defined in 
\ref{sec:numerical}. From the description in \ref{sec:locally-symmetric}, the connected components of 
$\tilde{S}^{G_{n-1}}_{R_f}$ are of the form $\Gamma_i \backslash G_{n-1}(\R)^0 / C_{n-1,\infty}^0$. 
We fix an orientation on $G_{n-1}(\R)^0 / C_{n-1,\infty}^0$, push it down to all such connected components, and then take the sum over the index $i$ (as in $\Gamma_i$) to get the orientation class on $\tilde{S}^{G_{n-1}}_{R_f}$; 
which we denote as $[\tilde{S}^{G_{n-1}}_{R_f}]$. For each $v \in S_r$, define $\delta_v \in \pi_0(K_{n-1,\infty})$ to be the element which is trivial at all places other than 
$v$, and at $v$ it is $\delta_n = {\rm diag}(-1,1,1,\dots,1)$; see \ref{sec:group-infinity}. 
Then $\pi_0(K_{n-1,\infty})$ is generated by $\{\delta_v : v \in S_r\}.$

\begin{lem}
\label{lem:delta-action}
The group $\pi_0(K_{n-1,\infty})$ acts on the orientation class $[\tilde{S}^{G_{n-1}}_{R_f}]$ via: 
$$
\delta_v^* \cdot [\tilde{S}^{G_{n-1}}_{R_f}] \ = \ (-1)^n \, [\tilde{S}^{G_{n-1}}_{R_f}], \quad \forall v \in S_r.
$$
\end{lem}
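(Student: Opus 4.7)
The plan is to reduce the lemma to a single linear-algebra computation, namely the determinant of $\mathrm{Ad}(\delta_v)$ on $\mathfrak{gl}_{n-1}(\R)/\mathfrak{so}(n-1)$. First I would verify that the right-translation action of $\delta_v \in G_{n-1}(\R) \subset G_{n-1}(\A)$ descends to a well-defined diffeomorphism $\bar R_{\delta_v}$ of $\tilde S^{G_{n-1}}_{R_f}$: $\delta_v^2 = 1_{n-1}$, and $\delta_v$ normalizes $C^0_{n-1,\infty} = \prod_{u \in S_r} \SO(n-1) \times \prod_{u \in S_c} \Unit(n-1)$, since conjugation by $\delta_v$ acts trivially at every archimedean place other than $v$ and preserves $\SO(n-1)$ at $v$. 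Being a diffeomorphism, $\bar R_{\delta_v}^*$ acts on the top-degree orientation class by a global sign equal to the sign of its Jacobian determinant at any chosen point; crucially this sign is independent of the point and independent of which connected component we start in, because the orientation on every component is pushed down from the \emph{same} fixed orientation on $G_{n-1}(\R)^0 / C^0_{n-1,\infty}$ by left translation.

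Next I would compute the Jacobian at the identity coset. Via left-trivialization of the tangent bundle, the differential of right translation $R_{\delta_v}$ on $G_{n-1}(\A)$ becomes $\mathrm{Ad}(\delta_v^{-1})$ on $\mathfrak{g}^0_{n-1,\infty}$, because
\[
\exp(tX) \, \delta_v \ = \ \delta_v \exp\bigl(t\,\mathrm{Ad}(\delta_v^{-1})X\bigr).
\]
Passing to the quotient, the induced map on $T_{[e]} \tilde S^{G_{n-1}}_{R_f} \cong \mathfrak{g}^0_{n-1,\infty}/\mathfrak{c}^0_{n-1,\infty}$ is $\mathrm{Ad}(\delta_v^{-1})$. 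Since $\delta_v$ is trivial at every archimedean place $u \neq v$, its adjoint action is trivial on each such factor, and only the factor $\mathfrak{gl}_{n-1}(\R)/\mathfrak{so}(n-1)$ at $v$ can contribute a nontrivial sign.

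The remaining step is the elementary computation: identify $\mathfrak{gl}_{n-1}(\R)/\mathfrak{so}(n-1)$ with $\mathrm{Sym}_{n-1}(\R)$ via the Cartan decomposition. Under conjugation by $\delta_v = \mathrm{diag}(-1,1,\dots,1)$ the diagonal basis elements $E_{ii}$ and the generators $E_{ij}+E_{ji}$ with $i,j \geq 2$ are fixed, while the $n-2$ generators $E_{1j}+E_{j1}$ for $2 \leq j \leq n-1$ are negated. Hence
\[
\det\nolimits_{\mathrm{Sym}_{n-1}(\R)} \mathrm{Ad}(\delta_v^{-1}) \ = \ (-1)^{n-2} \ = \ (-1)^n,
\]
which gives the lemma.

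The main (mild) obstacle is the bookkeeping at the first step: $\bar R_{\delta_v}$ does \emph{not} preserve each connected component of $\tilde S^{G_{n-1}}_{R_f}$ but rather sends the component indexed by $(g_\infty, g_f)$ to the one indexed by $(g_\infty \delta_v, g_f)$. One must therefore argue that the pushed-down orientations are coherent enough that a pointwise Jacobian on the total space controls the global pullback sign on $[\tilde S^{G_{n-1}}_{R_f}]$. This is settled precisely by the fact that the orientation is defined by a single choice on $G_{n-1}(\R)^0/C^0_{n-1,\infty}$ transported to every component by left translation, and left translations commute with the local left-trivialized computation above.
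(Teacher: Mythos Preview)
Your proof is correct and follows essentially the same approach as the paper: both reduce to the action of $\mathrm{Ad}(\delta_v)$ on $\gl_{n-1}/\so(n-1)\cong\mathrm{Sym}_{n-1}(\R)$, count that exactly the $n-2$ basis elements $E_{1j}+E_{j1}$ (for $2\le j\le n-1$) are negated, and conclude $(-1)^{n-2}=(-1)^n$. Your write-up is more careful than the paper's brief sketch in justifying the reduction (descent of right translation, component bookkeeping, Jacobian interpretation), but the mathematical content is the same.
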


\begin{proof}[Brief sketch of proof]
Let $\{X_1,\dots,X_m\}$ be an ordered basis for $\gl_{n-1}/\so(n-1)$ consisting of:
$$
\{H_i = E_{ii} : 1 \leq i \leq n-1 \} \cup \{X_{ij} = E_{ij}+E_{ji} : 1 \leq i < j \leq n-1\}, 
$$
where $E_{ij}$ is the matrix with $1$ in the $(i,j)$-th entry and $0$ elsewhere. Then  
$\delta = {\rm diag}(-1,1,1,\dots,1)$ fixes each $H_i$ and each $X_{ij}$ with $i \neq 1$, and takes 
$X_{1j}$ to $-X_{1j}.$  Hence 
$\delta^*(X_1\wedge \cdots \wedge X_m) = (-1)^{n-2}(X_1\wedge \cdots \wedge X_m).$ The proof consists in applying this remark to $\delta$ as a given $\delta_v$, which has no effect on the basis vectors coming from places other than $v.$
(See also \cite[Lem.\,3.4]{raghuram-imrn}.)
\end{proof}

\medskip
\paragraph{\bf The global pairing}
Referring to the diagram in \ref{sec:main-idea}, we wish to compute the pairing
\begin{equation}
\label{eqn:global-pairing}
\langle \vartheta^\epsilon_{\Pi}, \vartheta^\eta_{\Sigma} \rangle \ := \ 
\int_{[\tilde{S}^{G_{n-1}}_{R_f}]} 
\cT^*\left(\iota^*\vartheta^\epsilon_{\Pi} \, \wedge \, \phi^*\vartheta^\eta_{\Sigma} \right), 
\quad {\rm or} \quad 
\langle \vartheta^\epsilon_{\Pi,0}, \vartheta^\eta_{\Sigma,0} \rangle \ = \ 
\frac{\langle \vartheta^\epsilon_{\Pi}, \vartheta^\eta_{\Sigma} \rangle}{p^{\epsilon}(\Pi)\, p^{\eta}(\Sigma)}.
\end{equation}

\medskip
\paragraph{\bf The choice of signs $\epsilon$ and $\eta$}
\label{sec:signs}
Recall that $\epsilon = (\epsilon_v)_{v \in S_r}$ and $\eta = (\eta_v)_{v \in S_r}$. It is clear from 
Lem.\,\ref{lem:delta-action} that we can expect to have a nonzero pairing in (\ref{eqn:global-pairing}) only when 
$$
\epsilon_v \ = \ (-1)^n \eta_v, \quad \forall v \in S_r.
$$
Further, by parity constraints as in \ref{sec:cuspidal-range}, we know that 
$\epsilon$ or $\eta$ is uniquely determined by $\Pi$ or $\Sigma$, depending on whether $n$ is odd or even. To summarize, for every $v \in S_r$, we have
\begin{enumerate}
\item $\epsilon_v = (-1)^n\eta_v$. 
\item 
   \begin{itemize}
     \item If $n$ is odd then let $\epsilon_v = \omega_{\Pi_v}(-1) \cdot (-1)^{{\sf w}(\mu)/2}$; 
     \item if $n$ is even then let $\eta_v = \omega_{\Sigma_v}(-1) \cdot (-1)^{{\sf w}(\lambda)/2}.$ 
   \end{itemize}
\end{enumerate}

\medskip
\paragraph{\bf The pairing at infinity}
As in \cite[Sect.\,3.2.5]{raghuram-imrn}, the computation of the pairing in (\ref{eqn:global-pairing}) involves a certain archimedean contribution. Recall from \ref{sec:comparison}, the generator $[\Pi_{\infty}]^{\epsilon}$ of the one-dimensional space $H^{b_n^F}(\g_n ,K_{n, \infty}^0; \Whit(\Pi_{\infty}) \otimes \M_{\mu,\C}^{\sf v})(\epsilon),$ and 
similarly, we have $[\Sigma_{\infty}]^{\eta}$ generating the one-dimensional 
$H^{b_{n-1}^F}(\g_{n-1} ,K_{n-1, \infty}^0; \Whit(\Sigma_{\infty}) \otimes \M_{\lambda,\C}^{\sf v})(\eta).$
To compute the pairing at infinity, fix a basis 
$\{{\bf y}_j : 1 \leq j \leq d_{n-1}^F\}$ for $(\mathfrak{g}_{n-1,\infty}/\c_{n-1,\infty})^*$ such that 
$\{{\bf y}_j : 1 \leq j \leq d_{n-1}^F-1\}$ is a basis for $(\mathfrak{g}_{n-1,\infty}/\k_{n-1,\infty})^*$.  
Next, fix a basis $\{{\bf x}_i : 1 \leq i \leq d_n^F-1\}$ for 
$(\mathfrak{g}_{n,\infty}/\mathfrak{k}_{n,\infty})^*$, 
such that 
$\iota^*{\bf x}_j = {\bf y}_j$ for all 
$1 \leq j \leq {\rm dim}(\mathfrak{g}_{n-1,\infty}/\mathfrak{k}_{n-1,\infty})^* = d_{n-1}^F-1$, and 
$\iota^*{\bf x}_i = 0$ if $i \geq d_{n-1}^F$. We further note that 
${\bf y}_1\wedge {\bf y}_2 \wedge \cdots \wedge {\bf y}_{d_{n-1}^F}$ corresponds to 
a $G_{n-1}({\mathbb R})^0$-invariant measure on $\tilde{S}^{G_{n-1}}_{R_f}$. 
Let $\{m_{\alpha}^{\sf v}\}$ (resp., $\{m_{\beta}^{\sf v}\}$) be a $\Q$-basis for $\M_{\mu}^{\sf v}$ (resp., 
$\M_{\lambda}^{\sf v}$). 
The class $[\Pi_{\infty}]^\epsilon$ is represented by a $K_{n,\infty}^0$-invariant element in 
$\wedge^{b_n}(\mathfrak{g}_{n,\infty}/\mathfrak{k}_{n,\infty})^* \otimes \Whit(\Pi_{\infty}) \otimes \M_{\mu,\C}^{\sf v}$
which we write as 
\begin{equation}
\label{eqn:pi-infty}
[\Pi_{\infty}]^\epsilon = 
\sum_{{\bf i} = i_1 < \cdots < i_{b_n^F}}
\sum_{\alpha} \ {\bf x}_{\bf i} \otimes w_{\infty,\bf{i},\alpha}  \otimes m_{\alpha}^{\sf v},
\end{equation}
where $w_{\infty,\bf{i},\alpha} \in \Whit(\Pi_{\infty},\psi_{\infty})$. Thinking in terms of the K\"unneth theorem for relative Lie algebra cohomology, the choice of basis $\{{\bf x}_i\}$ and 
$\{m_{\alpha}^{\sf v}\}$ can be made so that we have: 
$$
[\Pi_{\infty}]^\epsilon \ = \ \otimes_{v \in S_\infty} [\Pi_v]^{\epsilon_v}. 
$$
(It is understood that for $v \in S_c$ there is no sign $\epsilon_v.$) 
Similarly, $[\Sigma_{\infty}]^\eta$ is represented by a $K_{n-1,\infty}^0$-invariant element in 
$\wedge^{b_{n-1}^F}(\mathfrak{g}_{n-1,\infty}/\mathfrak{k}_{n-1,\infty})^* \otimes \Whit(\Sigma_{\infty}) \otimes 
\M_{\lambda}^{\sf v}$ which we write as:
\begin{equation}
\label{eqn:sigma-infty}
[\Sigma_{\infty}]^\eta = 
\sum_{{\bf j} = j_1 < \cdots < j_{b_{n-1}^F}} \sum_{\beta} \ 
{\bf y}_{\bf j} \otimes w_{\infty,\bf{j},\beta}  \otimes m_{\beta}^{\sf v},
\end{equation}
with $w_{\infty,\bf{j},\beta} \in \Whit(\Sigma_{\infty},\psi^{-1}_{\infty})$. Again, we have 
$$
[\Sigma_{\infty}]^\eta \ = \ \otimes_{v \in S_\infty} [\Sigma_v]^{\eta_v}. 
$$

We now define a pairing at infinity by
\begin{equation}
\label{eqn:pairing-infinity}
\langle [\Pi_{\infty}]^\epsilon, [\Sigma_{\infty}]^\eta \rangle = 
\sum_{{\bf i}, {\bf j}} s({\bf i}, {\bf j}) 
\sum_{\alpha, \beta} 
\cT(m_{\beta}^{\sf v} \otimes m_{\alpha}^{\sf v})  
\Psi_{\infty}(1/2, w_{\infty,{\bf i},\alpha}, w_{\infty,{\bf j},\beta}), 
\end{equation}
where $s({\bf i},{\bf j}) \in \{0,-1,1\}$ is defined by 
$\iota^*{\bf x}_{\bf i} \wedge {\bf y}_{\bf j} = s({\bf i},{\bf j}) {\bf y}_1 \wedge {\bf y}_2 \wedge \cdots 
\wedge {\bf y}_{d_{n-1}^F}$. Recall that 
$\Psi_{\infty}(1/2, w_{\infty,{\bf i},\alpha}, w_{\infty,{\bf j},\beta})$ is defined only after 
meromorphic continuation. However, the assumption on $\mu$ and $\lambda$ in (\ref{eqn:comp}) guarantees that 
$s = 1/2$ is critical which ensures that the integrals $\Psi_{\infty}(1/2, w_{\infty,{\bf i},\alpha}, w_{\infty,{\bf j},\beta})$ are all finite, hence $\langle [\Pi_{\infty}], [\Sigma_{\infty}] \rangle$ is finite. Furthermore, computing this pairing is a purely local problem since 
$$
\langle [\Pi_{\infty}]^\epsilon, [\Sigma_{\infty}]^\eta \rangle \ = \ \prod_{v \in S_\infty} 
\langle [\Pi_v]^{\epsilon_v}, [\Sigma_v]^{\eta_v} \rangle. 
$$ 
 Binyong Sun \cite{sun} has recently proved that the local pairings are all nonzero giving us the following  
\begin{thm}
\label{thm:nonvanishing}
$
\langle [\Pi_{\infty}]^\epsilon, [\Sigma_{\infty}]^\eta \rangle \neq 0.
$
\end{thm}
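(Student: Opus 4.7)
The plan is to reduce the global statement to a purely local one at each archimedean place, then to invoke (and outline) the nonvanishing theorem of Binyong Sun. First, by the K\"unneth decomposition of the relative Lie algebra cohomology $H^{b_n^F}(\g_n, K_{n,\infty}^0; \Whit(\Pi_\infty) \otimes \M_{\mu,\C}^{\sf v})$, together with the factorizations $[\Pi_\infty]^\epsilon = \otimes_{v \in S_\infty}[\Pi_v]^{\epsilon_v}$ and $[\Sigma_\infty]^\eta = \otimes_{v \in S_\infty}[\Sigma_v]^{\eta_v}$ (as well as the factorization of $\cT = \otimes_v \cT_v$ into its archimedean components), one obtains $\langle[\Pi_\infty]^\epsilon,[\Sigma_\infty]^\eta\rangle = \prod_{v \in S_\infty}\langle[\Pi_v]^{\epsilon_v},[\Sigma_v]^{\eta_v}\rangle$, as the paper already notes. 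It therefore suffices to prove nonvanishing of each local factor, separately for $v \in S_r$ and $v \in S_c$.

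For a fixed $v \in S_\infty$, the local pairing defined as in (\ref{eqn:pairing-infinity}) is a linear combination of zeta integrals $\Psi_v(\tfrac12, w_{v,\mathbf{i},\alpha}, w_{v,\mathbf{j},\beta})$ weighted by the branching coefficients $\cT_v(m_\beta^{\sf v} \otimes m_\alpha^{\sf v})$. Since the hypothesis $\mu^{\sf v} \succ \lambda + m_0$ in (\ref{eqn:comp}), together with Thm.\,\ref{thm:critical-compatible}, ensures that $s = \tfrac12$ is critical, all these integrals are finite and the pairing is well-defined. The idea is then to recognize the assignment
\[
(w_v, w'_v, \xi_v) \longmapsto \Psi_v\bigl(\tfrac12, w_v, w'_v\bigr) \cdot \xi_v,
\quad \xi_v \in \Hom_{\GL_{n-1}(F_v)}(\M_{\mu_v}^{\sf v}\otimes\M_{\lambda_v}^{\sf v}, \C),
\]
as producing an element of the space $\Hom_{\GL_{n-1}(F_v)}(J_{\mu_v}\widehat{\otimes}J_{\lambda_v}\widehat{\otimes}\M_{\mu_v}^{\sf v}\widehat{\otimes}\M_{\lambda_v}^{\sf v}, \C)$; by branching (Cor.\,\ref{cor:branching}) and the multiplicity-one theorem of Sun--Zhu for Rankin--Selberg Hom spaces at archimedean places, this space is one-dimensional, and the cohomological pairing realizes a specific element of it. Thus nonvanishing of $\langle[\Pi_v]^{\epsilon_v},[\Sigma_v]^{\eta_v}\rangle$ is equivalent to the statement that this distinguished element is nonzero.

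The hard part, which is exactly the content of Sun \cite{sun}, is to exhibit nonvanishing of this distinguished functional. The strategy is to show that after restricting to a suitable open Bruhat cell, the archimedean zeta integral defines a nonzero continuous functional on the (Casselman--Wallach completion of the) Whittaker model, and that the cohomologically prescribed test vector has nonzero image. More precisely, one fixes cohomological test vectors representing the generators $[\Pi_v]^{\epsilon_v}, [\Sigma_v]^{\eta_v}$ of the relevant one-dimensional relative Lie algebra cohomology spaces (whose existence and uniqueness, up to scalars, follow from Propositions \ref{prop:local-rep-n-even}, \ref{prop:local-rep-n-odd}, \ref{prop:local-rep-c}) and reduces the evaluation of the integral to a contour shift plus a residue calculation on an open orbit; the output is a nonzero product of $\Gamma$-values at the critical point $\tfrac12$, which are finite and nonzero precisely by the criticality hypothesis. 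The principal obstacle here is controlling the continuity and convergence of the reduced integral for the Casselman--Wallach smooth vectors, and showing the cohomological vectors do not accidentally vanish at the critical point; overcoming this, and carrying the argument through uniformly in the real and complex cases, is exactly the technical heart of \cite{sun}, whose result we invoke to complete the proof.
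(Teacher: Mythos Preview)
Your proposal is correct and takes essentially the same approach as the paper: factor the global pairing as a product of local pairings over $S_\infty$ via K\"unneth, then invoke Sun \cite{sun} for the nonvanishing of each local factor. The paper itself offers no proof beyond this citation, whereas you go further and sketch the architecture of Sun's argument (multiplicity one via Sun--Zhu, reduction to an open cell, finiteness and nonvanishing of the resulting $\Gamma$-values at the critical point); this additional outline is accurate in spirit but is not part of the paper's own proof, which simply appeals to \cite{sun} as a black box.
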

The quantity $\langle [\Pi_{\infty}]^\epsilon, [\Sigma_{\infty}]^\eta \rangle$ depends only on the weights $\mu$ and $\lambda$, and the signs 
$\epsilon$ and $\eta$ which are determined as in 
\ref{sec:signs}. Now define:
\begin{equation}
\label{eqn:p-infinity}
p_{\infty}^{\epsilon, \eta}(\mu,\lambda) := \frac{1}{\langle [\Pi_{\infty}]^\epsilon, [\Sigma_{\infty}]^\eta \rangle}.
\end{equation}
Ultimately, one should expect $p_{\infty}^{\epsilon, \eta}(\mu,\lambda)$
to be a power of $(2\pi i)$.

\medskip
\subsubsection{\bf The main identity}
The following theorem is a generalization of \cite[Thm.\,3.12]{raghuram-imrn}. 

\begin{thm}[Main Identity]
\label{thm:main-identity}
Let $\mu \in X^+_{00}(T_n)$ and $\Pi \in \Coh(G_n,\mu^{\sf v})$. Let $\lambda \in X^+_{00}(T_{n-1})$ and 
$\Sigma \in \Coh(G_{n-1},\lambda^{\sf v}).$ 
Assume that $\mu$ and $\lambda$ satisfy $\mu^{\sf v} \succ \lambda$; in particular, $s=1/2$ is critical for 
$L_f(s, \Pi \times \Sigma)$. Take the signs $\epsilon, \eta$ as in \ref{sec:signs}. 
Let $\vartheta^\epsilon_{\Pi,0}$ and $\vartheta^\eta_{\Sigma,0}$ be the normalized classes defined in 
(\ref{eqn:coh-classes-0}). Then
$$
\frac{L_f(\tfrac12, \Pi \times \Sigma)}
{p^{\epsilon}(\Pi)\, p^{\eta}(\Sigma)\, p_{\infty}^{\epsilon,\eta}(\mu,\lambda)} \ = \  
\frac{\prod_{v \in S_{\Sigma}} L(\tfrac12, \Pi_v \times \Sigma_v)}
{{\rm vol}(\Sigma)\, \prod_{v \notin S_{\Sigma} \cup S_\infty} c_{\Pi_v}}\, 
\langle \vartheta^\epsilon_{\Pi,0},  \vartheta^\eta_{\Sigma,0} \rangle,
$$
where the pairing on the right hand side is defined in (\ref{eqn:global-pairing}), the nonzero rational number  
${\rm vol}(\Sigma)$ is as in Prop.\,\ref{prop:rankin-selberg}, 
and $c_{\Pi_v}$ is defined in \ref{sec:newvectors}.
\end{thm}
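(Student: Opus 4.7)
The strategy, announced in \ref{sec:main-idea}, is to trace the pairing $\langle \vartheta^\epsilon_\Pi, \vartheta^\eta_\Sigma \rangle$ of (\ref{eqn:global-pairing}) through the diagram there, and recognize Prop.\,\ref{prop:rankin-selberg} as the resulting identity. The hypothesis $\mu^{\sf v} \succ \lambda$ plays a dual role: by Thm.\,\ref{thm:critical-compatible} it guarantees that $s=1/2$ is critical, so that $\Psi_\infty(\tfrac12,\cdot,\cdot)$ is everywhere finite; and by Cor.\,\ref{cor:branching} it provides the nonzero branching map $\cT \in \Hom_{G_{n-1}}(\M_\mu^{\sf v}\otimes\M_\lambda^{\sf v}, 1\!\!1)$ that converts the cup product, a priori valued in a tensor-product local system on $\tilde{S}^{G_{n-1}}_{R_f}$, into a top-degree class with constant coefficients, ready to be paired against the orientation class.

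The first and crucial step is to unfold the pairing into Rankin--Selberg integrals. Using the representatives (\ref{eqn:pi-infty})--(\ref{eqn:sigma-infty}), the class $\vartheta^\epsilon_\Pi = \mathcal{F}^\epsilon_\Pi(w_{\Pi_f})$ is represented by the relative Lie algebra cochain $\sum_{{\bf i},\alpha} {\bf x}_{\bf i}\otimes \phi_{{\bf i},\alpha}\otimes m^{\sf v}_\alpha$, where $\phi_{{\bf i},\alpha}$ is the cusp form whose global Whittaker function is $w_{\Pi_f}\otimes w_{\infty,{\bf i},\alpha}$; similarly for $\vartheta^\eta_\Sigma$. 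Pulling back along $\iota$ and $\phi$ using the compatibility $\iota^*{\bf x}_j = {\bf y}_j$ for $j \le d^F_{n-1}-1$ (and zero otherwise), taking the wedge product, applying $\cT^*$ coefficient-wise, and evaluating against $[\tilde{S}^{G_{n-1}}_{R_f}]$ yields
\[
\langle \vartheta^\epsilon_\Pi, \vartheta^\eta_\Sigma \rangle \;=\; \sum_{{\bf i},{\bf j}} s({\bf i},{\bf j}) \sum_{\alpha,\beta} \cT(m^{\sf v}_\beta\otimes m^{\sf v}_\alpha)\, I(\tfrac12,\phi_{{\bf i},\alpha}, \phi_{{\bf j},\beta}),
\]
with $s({\bf i},{\bf j})$ determined by the orientation; this is a place-by-place archimedean generalization of the $F=\Q$ unfolding in \cite[Thm.\,3.12]{raghuram-imrn}. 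Since every $\phi_{{\bf i},\alpha}$ and $\phi_{{\bf j},\beta}$ shares the fixed finite Whittaker components $w_{\Pi_f}$ and $w_{\Sigma_f}$, the local factorization $\Psi(s,w,w') = \prod_v\Psi_v$ separates the finite factor and identifies the archimedean one with the pairing (\ref{eqn:pairing-infinity}):
\[
\langle \vartheta^\epsilon_\Pi, \vartheta^\eta_\Sigma \rangle \;=\; \Psi_f(\tfrac12, w_{\Pi_f}, w_{\Sigma_f})\cdot \langle [\Pi_\infty]^\epsilon, [\Sigma_\infty]^\eta \rangle.
\]

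Combining $I(\tfrac12) = \Psi_\infty(\tfrac12)\Psi_f(\tfrac12)$ with Prop.\,\ref{prop:rankin-selberg} evaluates the finite factor as $({\rm vol}(\Sigma)\prod_{v\notin S_\Sigma\cup\{\infty\}} c_{\Pi_v}/\prod_{v\in S_\Sigma} L(\tfrac12,\Pi_v\times\Sigma_v))\cdot L_f(\tfrac12, \Pi\times\Sigma)$. Substituting, invoking $p_\infty^{\epsilon,\eta}(\mu,\lambda) = \langle[\Pi_\infty]^\epsilon,[\Sigma_\infty]^\eta\rangle^{-1}$ from (\ref{eqn:p-infinity}), and dividing both sides by $p^\epsilon(\Pi)\,p^\eta(\Sigma)$ to pass to the normalized classes yields the desired identity. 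The main obstacle is the orientation-and-sign bookkeeping in the first step: across multiple archimedean places one must align the action of $\pi_0(K_{n-1,\infty})$ on $[\tilde{S}^{G_{n-1}}_{R_f}]$ from Lem.\,\ref{lem:delta-action} with the signatures $\epsilon, \eta$ of \ref{sec:signs} --- this is precisely what forces $\eta_v = (-1)^n\epsilon_v$; otherwise contributions from different connected components of $\tilde{S}^{G_{n-1}}_{R_f}$ cancel identically. Once handled place-by-place and aggregated via K\"unneth, the cohomological unfolding proceeds exactly as in \cite{raghuram-imrn}.
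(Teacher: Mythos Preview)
Your proposal is correct and follows essentially the same route as the paper's own proof: represent $\vartheta^\epsilon_\Pi$ and $\vartheta^\eta_\Sigma$ by the relative Lie algebra cochains built from $w_{\Pi_f}, w_{\Sigma_f}$ and the archimedean data (\ref{eqn:pi-infty})--(\ref{eqn:sigma-infty}), unfold the Poincar\'e pairing into the sum $\sum s({\bf i},{\bf j})\,\cT(m^{\sf v}_\beta\otimes m^{\sf v}_\alpha)\, I(\tfrac12,\phi_{{\bf i},\alpha},\phi_{{\bf j},\beta})$, and then invoke Prop.\,\ref{prop:rankin-selberg} together with (\ref{eqn:pairing-infinity}) and (\ref{eqn:p-infinity}). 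The only cosmetic difference is that you first isolate $\Psi_f(\tfrac12,w_{\Pi_f},w_{\Sigma_f})$ and then read it off from Prop.\,\ref{prop:rankin-selberg}, whereas the paper applies Prop.\,\ref{prop:rankin-selberg} term by term and recognizes the archimedean pairing afterward; the content is identical.
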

Since $c_{\Pi_v} = 1$ for an unramified places $v$, the infinite product in the denominator of the right hand side is in fact a finite product.  

\begin{proof}
We will only adumbrate the proof, since the proof of \cite[Thm.\,3.12]{raghuram-imrn} goes through {\it mutatis mutandis}. 
Using the definition of normalized classes in (\ref{eqn:coh-classes-0}), we see that it is enough to show: 
$$
\langle \vartheta^\epsilon_{\Pi},  \vartheta^\eta_{\Sigma} \rangle \ = \ 
\frac{L_f(\tfrac12, \Pi \times \Sigma) \cdot 
{\rm vol}(\Sigma)\cdot \prod_{v \notin S_{\Sigma} \cup S_\infty} c_{\Pi_v}}
{\prod_{v \in S_{\Sigma}} L(\tfrac12, \Pi_v \times \Sigma_v) \cdot p_{\infty}^{\epsilon,\eta}(\mu,\lambda)}.
$$
Using (\ref{eqn:coh-classes}) and (\ref{eqn:pi-infty}) we may write the cohomology classes as: 
$$
\vartheta^\epsilon_{\Pi}  = 
\sum_{{\bf i}}
\sum_{\alpha} \ {\bf x}_{\bf i} \otimes \phi_{\bf{i},\alpha}  \otimes m_{\alpha}^{\sf v},
$$
where the cusp form $\phi_{{\bf i}, \alpha}$, in the $\psi$-Whittaker model of $\Pi$, looks like 
$w_{\Pi_f} \otimes w_{\infty, {\bf i}, \alpha}.$ Similarly, 
$$
\vartheta^\eta_{\Sigma}  = 
\sum_{{\bf j}}
\sum_{\beta} \ {\bf y}_{\bf j} \otimes \phi_{\bf{j},\beta}  \otimes m_{\beta}^{\sf v},
$$
where the cusp form $\phi_{{\bf j}, \beta}$, in the $\psi^{-1}$-Whittaker model of $\Sigma$, looks like 
$w_{\Sigma_f} \otimes w_{\infty, {\bf j}, \beta}.$ 
Computing the global pairing as defined in (\ref{eqn:global-pairing}), in the situation where 
the signatures are as prescribed in 
\ref{sec:signs}, while using the above expressions for $\vartheta^\epsilon_{\Pi}$ and $\vartheta^\eta_{\Sigma}$, we get
$$
\langle \vartheta^\epsilon_{\Pi},  \vartheta^\eta_{\Sigma} \rangle \ = \ 
\sum_{{\bf i}, {\bf j}} 
\sum_{\alpha, \beta} 
s({\bf i}, {\bf j})  
\cT(m_{\beta}^{\sf v} \otimes m_{\alpha}^{\sf v})  
I(\tfrac12, \phi_{{\bf i}, \alpha}, \phi_{{\bf j}, \beta}).
$$
Using Prop.\,\ref{prop:rankin-selberg} and the pairing at infinity (\ref{eqn:pairing-infinity}) this may be re-written as  
$$
\langle \vartheta^\epsilon_{\Pi},  \vartheta^\eta_{\Sigma} \rangle \ = \ 
\frac{L_f(\tfrac12, \Pi \times \Sigma) \cdot 
{\rm vol}(\Sigma)\cdot \prod_{v \notin S_{\Sigma} \cup S_\infty} c_{\Pi_v}}
{\prod_{v \in S_{\Sigma}} L(\tfrac12, \Pi_v \times \Sigma_v)} 
\langle [\Pi_{\infty}]^\epsilon, [\Sigma_{\infty}]^\eta \rangle .
$$
The main identity now follows from (\ref{eqn:p-infinity}). 
\end{proof}

\medskip
\subsubsection{\bf The proof of Theorem~\ref{thm:main}} 

\paragraph{\bf Central critical value}
Suppose $\mu^{\sf v} \succ \lambda$ then $s = 1/2$ is critical. The proof of \cite[Thm.\,1.1]{raghuram-imrn} goes through {\it mutatis mutandis} giving an algebraicity result 
for the central critical value; the proof entails verifying that the right hand side of the main identity in 
Thm.\,\ref{thm:main-identity} above is Galois equivariant, i.e., well-behaved under the action of $\sigma \in \autc.$ 
This is the essence of \cite[Sect.\,3.3]{raghuram-imrn}, all of which appropriately generalizes to the situation of this paper and so we merely state the necessary observations to make while omitting the details.  

%$\langle \vartheta^\epsilon_{\Pi,0},  \vartheta^\eta_{\Sigma,0} \rangle,$

\begin{itemize}

\item The Poincar\'e duality pairing  $\langle \, ,  \,  \rangle$
is Galois-equivariant; see \cite[Prop.\,3.14]{raghuram-imrn}.

\item The Galois equivariance of the classes $\vartheta^\epsilon_{\Pi,0}$ and $\vartheta^\eta_{\Sigma,0}$, is somewhat more subtle, because of our specific choice of finite Whittaker vectors $w_{\Pi_f}$ and $w_{\Sigma_f}$. Using 
\cite[Prop.\,3.15]{raghuram-imrn} and \cite[Cor.\,3.14]{raghuram-imrn}, both of which easily go through in our setting, we get:
$$
{}^{\sigma}\!\vartheta^\epsilon_{\Pi, 0}  =  
\frac{\sigma(\mathcal{G}(\omega_{\Sigma_f}))}
{\mathcal{G}(\omega_{\Sigma_f^{\sigma}})}
\vartheta^\epsilon_{\Pi^{\sigma},0}, \quad {\rm and} \quad 
{}^{\sigma}\!\vartheta^\eta_{\Sigma,0} =  \vartheta^\eta_{\Sigma^{\sigma}, 0}. 
$$

\item Noting that \cite[Prop.\,3.17]{raghuram-imrn} works for any local field $F_v$ we get that local $L$-values are suitably rational, and moreover, we have: 
$$
\sigma(L(1/2, \Pi_v \times \Sigma_v)) = L(1/2, \Pi_v^{\sigma} \times \Sigma_v^{\sigma}).
$$

\item Finally, exactly as in \cite[Prop.\,3.18]{raghuram-imrn}, we have $\sigma(c_{\Pi_v}) = c_{{}^{\sigma}\!\Pi_v}.$

\end{itemize}

\medskip
\paragraph{\bf All critical values}
Now suppose $\mu$ and $\lambda$ satisfy (\ref{eqn:comp}), and suppose $\tfrac12 + m$ is critical. Then we take a suitable Tate twist and consider a situation when $\tfrac12$ is critical, and then apply the period relations in  \ref{sec:period-relaitons}. The reader should bear in mind that Thm.\,\ref{thm:critical-compatible} says that the set of possible Tate-twists we can take, subject to the restriction imposed by the compatibility condition (\ref{eqn:comp}), 
is exactly the set of all critical points for $L(s, \Pi \times \Sigma).$ However, the parity of $n$ will play a role, because this will affect the recipe for signs $\epsilon$ and $\eta$ as in \ref{sec:signs}. We argue as follows: 

Suppose $n$ is even, then we absorb the $m$ into the representation of $\GL_n$ as:
$$
L_f(\tfrac12 +m, \Pi \times \Sigma) 
\ = \ L_f(\tfrac12, \Pi(m) \times \Sigma) \ \sim \  
p^{\epsilon}(\Pi(m))\, p^{\eta}(\Sigma)\, \G(\omega_{\Sigma_f})\, p^{\epsilon, \eta}_{\infty}(\mu+m,\lambda).  
$$
Using (\ref{eqn:period-reln}) 
we can write this as 
$p^{\epsilon_m \epsilon}(\Pi)\, p^{\eta}(\Sigma)\, \G(\omega_{\Sigma_f})\, p^{\epsilon, \eta}_{\infty}(\mu+m,\lambda). $
Suppose $n$ is odd, then we absorb the $m$ into the representation of $\GL_{n-1}$, and argue similarly.

\bigskip
\section{\bf Symmetric power $L$-functions}

\subsection{Symmetric power transfers are cohomological}\label{sec:symm-powers}

\subsubsection{\bf Definition of ${\rm Sym}^r(\pi)$ and $L(s, \Sym^r(\pi) \otimes \chi)$} 
Let $\pi$ be a cohomological cuspidal automorphic representation of $\GL_2$ over $F$. 
Let $\Sym^r : \GL_2(\C) \to \GL_{r+1}(\C)$ be the $r$-th symmetric power of the standard representation of $\GL_2(\C)$. By the local Langlands correspondence, see Harris--Taylor \cite{harris-taylor} and Henniart \cite{henniart} for the non-archimedean places and Langlands \cite{lang2} for the archimedean places, the local transfer 
$\Sym^r(\pi_v)$ is defined as an irreducible admissible representation of $\GL_{r+1}(F_v)$. 
Now define $\Sym^r(\pi):=\bigotimes'_v {\rm Sym}^r(\pi_v)$ which is a well-defined irreducible admissible representation of $\GL_{r+1}(\A_F)$. Langlands's functoriality predicts that $\Sym^r(\pi)$ is an isobaric automorphic representation of $\GL_{r+1}(\A_F)$; this is known for $r \leq 4$ by Gelbart--Jacquet~\cite{gelbart-jacquet},  
Kim--Shahidi~\cite{kim-shahidi-annals}, and Kim~\cite{kim-jams} for a general $\pi$, and it is known for all $r$ if $\pi$ is dihedral. Define the $r$-th symmetric power $L$-function of $\pi$ as the standard $L$-function of the $r$-th symmetric power transfer of $\pi$; we may also introduce a twisting Hecke character $\chi$. We are interested in the special values of such twisted symmetric power $L$-functions of $\pi$: $L(s, \Sym^r(\pi) \otimes \chi).$ As in \cite{raghuram-imrn} we approach odd symmetric power $L$-functions inductively. To get started, consider the case when $r=1$.

\medskip

\subsubsection{\bf $L$-functions for $\GL_2 \times \GL_1$} 
\label{sec:explicate-gl2-gl1}
Let's explicate the $n=2$ case of Thm.\,\ref{thm:main}. The reader should note that this particular case is not new; see, for example, Shimura~\cite{shimura-duke} when $F$ is totally real, and Hida \cite{hida-duke} in the general case, although our notations are rather different. 

\smallskip

Let $\pi \in \Coh(G_2, \mu^{\sf v})$ with $\mu \in X^+_{00}(T_2).$ Let $\mu = (\mu^\iota)_{\iota \in \cE_F}$, and 
$\mu^\iota = (a^\iota, b^\iota) \in \Z^2$ with $a^\iota \geq b^\iota.$ Let ${\sf w} = {\sf w}(\mu)$ be the purity weight of 
$\mu.$ Recall: for any $v \in S_r$, ${\sf w} = a^\iota + b^\iota$, and for any $v \in S_c$, if 
$\mu^{\iota_v} = (a^{\iota_v}, b^{\iota_v})$ then $\mu^{\bar{\iota}_v} = ({\sf w}- b^{\iota_v}, {\sf w}- a^{\iota_v}).$

\smallskip

Let $\lambda \in \Z$ be thought of as an element $\lambda = (\lambda^\iota)_{\iota \in \cE_F} \in X^+_{00}(T_1),$ where each $\lambda^\iota = \lambda.$ The corresponding representation $\M_\lambda$ of $G_1$ is  
${\rm det}^\lambda.$ The purity weight is ${\sf w}(\lambda) = 2\lambda.$ Let $\chi \in \Coh(G_1, \lambda^{\sf v})$, then 
$\chi = \chi^\circ \otimes |\ |^{\lambda}$, where $\chi^\circ : F^\times \backslash \A_F^\times \to \C^\times$ is 
a character of finite order. For any $v \in S_r$, define $\epsilon_{\chi,v} = (-1)^\lambda \cdot \chi^\circ_v(-1)$; the signature of $\chi$ is $\epsilon_\chi = (\epsilon_{\chi, v})_{v \in S_r}.$  
 
\smallskip 
 
The compatibility condition (\ref{eqn:comp}) for the pair $(\mu, \lambda)$ 
is satisfied if there is an integer $j$ such that $-b^\iota \geq \lambda +j \geq -a^\iota$ for all $\iota$; this is best considered in two cases:  
\begin{enumerate}
\item $v \in S_r$: here we have $a^\iota - {\sf w} \geq \lambda +j \geq -a^\iota.$ Note a consequence of these inequalities is that $a^\iota - {\sf w} \geq -[{\sf w}/2] \geq -a^\iota.$
\item $v \in S_c$: here we have $-b^\iota \geq \lambda +j \geq -a^\iota$ and 
$a^\iota - {\sf w} \geq  \lambda +j \geq  b^\iota - {\sf w}.$ A consequence of both these inequalities are: 
$-b^\iota \geq -[{\sf w}/2]  \geq -a^\iota$ and $a^\iota - {\sf w} \geq  -[{\sf w}/2] \geq  b^\iota - {\sf w}.$ 
\end{enumerate}
It is important to appreciate the possibility that the compatibility condition (\ref{eqn:comp}) need not always be satisfied. For example, take $F$ to be imaginary quadratic extension of $\Q$, take $a^\iota = b^\iota = 0$ and ${\sf w} \neq 0$; then clearly the necessary condition in (2) is not satisfied. Furthermore, the reader can check in this situation that 
$L(s, \pi)$ doesn't have any critical points. 
(As explained in \ref{sec:subtlety}, even when $F$ is an imaginary quadratic extension and say 
$\cE_F = \{\iota, \bar\iota\}$, as in 
\cite{hida-duke} we could conceivably have critical points of type $A = \{\iota\}$, however, such a critical point is outside the purview of the compatibility condition (\ref{eqn:comp}).)
At the other extreme, if $F$ is totally real, then (\ref{eqn:comp}) is satisfied for $j = -\lambda - [{\sf w}/2] =  -[({\sf w}(\lambda) + {\sf w}(\mu))/2]$; and by  Thm.\,\ref{thm:critical-compatible}, there are critical points for $L(s, \pi \otimes \chi).$ Finally, let's note that for any number field with at least one real place, if $\mu$ is a parallel weight then (\ref{eqn:comp}) is satisfied and $L(s, \pi \otimes \chi)$ has critical points. Assume henceforth that  (\ref{eqn:comp}) holds for $(\mu, \lambda).$

\smallskip

Let $\tfrac12 + m \in \tfrac12 + \Z$ be any critical point for $L(s, \pi \times \chi)$. By Thm.\,\ref{thm:critical-compatible} this is the same as $m \in \Z$ such that $\mu^{\sf v} \succ \lambda +m.$  The sign $\eta = \epsilon_\chi$ as given above, and since $n=2$, $\epsilon = \eta.$ Further, $\chi$ being on $\GL_1$, we have  $p^\eta(\chi) \sim 1.$ 
The statement of Thm.\,\ref{thm:main} gives: 
$$
L_f(\tfrac12 + m, \pi \otimes \chi) \ \sim \ 
p^{\epsilon_m \epsilon_\chi}(\pi) \, \G(\chi) \, p_\infty^{\epsilon_\chi}(\mu+m, \lambda), 
$$
and more generally, the ratio of the left hand side by the right hand side is $\autc$-equivariant. When $F$ is totally real, the quantity $p_\infty^{\epsilon_\chi}(\mu+m, \lambda)$ has been explicitly calculated in 
\cite[Prop.\,3.24]{raghuram-tanabe}. In general, one expects, as in the totally real case, that this quantity is a rational multiple of an integral power of $2 \pi i.$

\smallskip

We will apply the above discussion to the situation $L(s, \pi \times \omega_\pi\xi)$ where $\omega_\pi$ is the central character of $\pi$ and $\xi$ is a finite order Hecke character of $F$. Given $\mu$ as above, define 
${\rm det}(\mu)$ by ${\rm det}(\mu)^\iota = a^\iota+b^\iota$; then ${\sf w}({\rm det}(\mu)) = 2{\sf w}(\mu).$ 
We have:
\begin{equation}
\label{eqn:det-coh}
\pi \in \Coh(G_2, \mu^{\sf v}) \implies \omega_\pi \in \Coh(G_1, {\rm det}(\mu)^{\sf v}). 
\end{equation}
The latter also implies that $\omega_\pi\xi \in \Coh(G_1, {\rm det}(\mu)^{\sf v}).$ As above, the compatibility condition 
(\ref{eqn:comp}) for the pair $(\mu, {\rm det}(\mu))$ need not be satisfied in general.

\subsubsection{\bf Transferring the weights $\mu$}
\label{sec:defn-symr-mu}
Let $\mu \in X^+_{00}(T_2)$. As above, suppose $\mu = (\mu^\iota)_{\iota \in \cE_F}$, with  
$\mu^\iota = (a^\iota, b^\iota) \in \Z^2$ and $a^\iota \geq b^\iota.$  Define a weight 
$\Sym^r(\mu) = (\Sym^r(\mu)^\iota)_{\iota \in \cE_F} \in X^+(T_{r+1})$, where for each $\iota$ we have: 
$$
\Sym^r(\mu)^\iota \ := \ \Sym^r(\mu^\iota) \ := \ (ra^\iota, (r-1)a^\iota + b^\iota, \dots, a^\iota + (r-1)b^\iota, rb^\iota).
$$
If ${\sf w} = {\sf w}(\mu)$ be the purity weight of $\mu$, then it is easy to check that $\Sym^r(\mu)$ is also pure and it's purity weight is ${\sf w}(\Sym^r(\mu)) = r {\sf w}.$ Furthermore, one checks that $\Sym^r(\mu)$ is strongly pure, i.e., 
$\Sym^r(\mu) \in X^+_{00}(T_{r+1}).$ Also, if $\mu$ is a parallel weight, then so is $\Sym^r(\mu).$

\subsubsection{\bf Symmetric power transfer preserves the property of being cohomological}

The following theorem is a generalization of our result with Shahidi \cite[Thm.\,5.5]{raghuram-shahidi-aim} and is 
a variation of Labesse--Schwermer \cite[Prop.\,5.4]{labesse-schwermer}.

\begin{thm}\label{thm:sym-coh}
Let $\mu \in X^+_{00}(T_2)$ and $\pi \in \Coh(G_2, \mu^{\sf v})$. Suppose $\Sym^r(\pi)$ is a cuspidal automorphic representation of $G_{r+1}$, then $\Sym^r(\pi) \in \Coh(G_{r+1}, \Sym^r(\mu)^{\sf v}).$
\end{thm}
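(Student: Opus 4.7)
The task is to verify that the symmetric power transfer $\Sym^r(\pi)$ has non-trivial relative Lie algebra cohomology with coefficients in $\M_{\Sym^r(\mu),\C}^{\sf v}$ when twisted by the archimedean component $\Sym^r(\pi)_\infty$. Since cuspidality of $\Sym^r(\pi)$ is given as a hypothesis, and since cohomological membership at the finite level is built into the definition via (\ref{eqn:cusp-coh}), the entire question reduces to a purely archimedean and purely local problem: for each $v \in S_\infty$, one must show that
\[
H^{\bullet}\!\bigl(\g_{r+1,v}, K_{r+1,v}^0;\, \Sym^r(\pi_v) \otimes \M_{\Sym^r(\mu_v),\C}^{\sf v}\bigr) \neq 0.
\]
By K\"unneth this suffices to conclude non-vanishing of the total archimedean cohomology, and then the automorphic decomposition (\ref{eqn:cusp-coh}) places $\Sym^r(\pi)$ inside $\Coh(G_{r+1},\Sym^r(\mu)^{\sf v})$.

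The plan is to compute $\Sym^r(\pi_v)$ explicitly as an induced representation, using the local Langlands parametrization, and to match it with the canonical cohomological model $J_{\Sym^r(\mu_v)}$ (or $J_{\Sym^r(\mu_v)}^{\epsilon_v'}$ when $r+1$ is odd) of Sections \ref{sec:repn-glnR}--\ref{sec:glnc}. Concretely, at a complex place $v$, Propositions \ref{prop:local-rep-c} and the recipe (\ref{eqn:cuspidal-parameters-c}) identify $\pi_v$ with $\Ind(z^{a_1}\bar z^{b_1} \otimes z^{a_2}\bar z^{b_2})$, where $(a_1,a_2) = (\mu_1^{\iota_v}+\tfrac12, \mu_2^{\iota_v}-\tfrac12)$ and similarly for the bar-part. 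The local Langlands parameter of $\pi_v$ restricted to $\C^\times \subset W_{F_v}$ is $z^{a_1}\bar z^{b_1} \oplus z^{a_2}\bar z^{b_2}$, so $\Sym^r(\pi_v)$ has parameter $\bigoplus_{i=0}^r z^{ia_1+(r-i)a_2}\bar z^{ib_1+(r-i)b_2}$. A direct computation shows that the indexed characters, rewritten in the variable $k=r-i+1$, give cuspidal parameters $(r-k+1)a + (k-1)b + \tfrac{r-2k+2}{2}$, which is precisely the recipe for the cuspidal parameters of $J_{\Sym^r(\mu_v)}$ built from $\Sym^r(\mu_v)^{\iota_v} = (ra^{\iota_v}, (r-1)a^{\iota_v}+b^{\iota_v}, \dots, rb^{\iota_v})$. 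Hence $\Sym^r(\pi_v) \cong J_{\Sym^r(\mu_v)}$, and the non-vanishing of its cohomology with the designated coefficients is the standard fact recalled after (\ref{eqn:j-mu-c}).

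At a real place $v$, one writes $\pi_v \cong D_{\ell}\otimes |\!\det\!|^{{\sf w}/2}$ (notation of Section \ref{sec:repn-glnR}) with $\ell = a^{\iota_v}-b^{\iota_v}+1$; the Langlands parameter is $\Ind_{W_\C}^{W_\R}(z^{a_1}\bar z^{b_1})$ restricted appropriately. Applying $\Sym^r$ to this two-dimensional $W_\R$-representation and decomposing it into irreducible summands gives, when $r$ is even, a sum of $r/2$ induced two-dimensional pieces plus one character, and when $r$ is odd, a sum of $(r+1)/2$ induced pieces. Each two-dimensional piece corresponds to a discrete series $D_{\ell'}$ with cuspidal parameter matching the recipe $\ell' = 2\Sym^r(\mu_v) + 2\bm{\rho}_{r+1} - {\sf w}(\Sym^r(\mu_v))$, and the extra character (when $r$ is even, so $r+1$ is odd) becomes the missing $\epsilon_v'$-twist in the definition (\ref{eq:j-mu-odd}) of $J_{\Sym^r(\mu_v)}^{\epsilon_v'}$. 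One then identifies $\Sym^r(\pi_v)$ with $J_{\Sym^r(\mu_v)}$ or $J_{\Sym^r(\mu_v)}^{\epsilon_v'}$ respectively and concludes.

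The hardest step, and the one I expect to absorb most of the bookkeeping, is the real-place computation when $r+1$ is odd: there the recipe of Proposition \ref{prop:local-rep-n-odd} pins down not only the highest weight but also the sign of the central character, so one must verify the compatibility $\epsilon_v'(-1) = \omega_{\Sym^r(\pi_v)}(-1)\cdot (-1)^{r/2}$, using $\omega_{\Sym^r(\pi_v)} = \omega_{\pi_v}^{\binom{r+1}{2}}$ together with the purity relation ${\sf w}(\Sym^r(\mu)) = r\,{\sf w}(\mu)$. Ancillary verifications, which are entirely formal, are that $\Sym^r(\mu) \in X^+_{00}(T_{r+1})$ (already noted in Section \ref{sec:defn-symr-mu}) and that $\Sym^r(\pi_v)$ is irreducible and generic, so that the inductive description via $J$-notation applies verbatim. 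Once these local identifications are in hand at every $v \in S_\infty$, the theorem follows immediately.
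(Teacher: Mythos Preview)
Your proposal is correct and follows essentially the same approach as the paper: reduce to a purely local archimedean computation, compute the Langlands parameter of $\Sym^r(\pi_v)$ at each real and complex place, and match it against the explicit description of the cohomological representations $J_{\Sym^r(\mu_v)}$ (or $J_{\Sym^r(\mu_v)}^{\epsilon_v'}$) given in Sections~\ref{sec:repn-glnR}--\ref{sec:glnc}. Your attention to the sign compatibility in the case $r$ even (so that $r+1$ is odd) is a detail the paper leaves largely implicit.
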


\begin{proof}
The proof is purely local, as it suffices to check that for $v \in S_\infty$, the representation $\Sym^r(\pi)_v$ has nontrivial relative Lie algebra cohomology after twisting by $\M_{\Sym^r(\mu_v)^{\sf v}}.$ Consider two cases: 

\smallskip

\noindent
$v \in S_r:$ Let's suppress the $\iota_v = \iota$ and write $\mu^\iota = (a,b).$ Then 
$\pi_v = D(a-b+1)|\ |^{{\sf w}/2}$, whose Langlands parameter we denote for brevity as $I(\xi_{l})({\sf w}/2)$ with $l = a-b+1$;  it is, up to twisting by  $|\ |^{{\sf w}/2}$, 
the two-dimensional representation of the Weil group $W_\R$ of $\R$ which is induced from the Weil group 
$W_\C = \C^\times$ of $\C$ by the character $\xi_{l}$ given by $z = re^{i\theta} \mapsto e^{i l \theta}.$ A pleasant exercise gives: 
$$
\Sym^r(I(\xi_{l})) \ = \ \left\{
\begin{array}{ll}
I(\xi_{l}) \oplus I(\xi_{3l}) \oplus \cdots \oplus I(\xi_{rl}), & \mbox{if $r$ is odd,} \\
 & \\
{\rm sgn}^{rl/2} \oplus I(\xi_{2l}) \oplus I(\xi_{4l}) \oplus \cdots \oplus I(\xi_{rl}), & \mbox{if $r$ is even}.
\end{array}\right.
$$
Also, $\Sym^r(I(\xi_{l})({\sf w}/2)) = \Sym^r(I(\xi_{l}))(r{\sf w}/2).$ 
By the local Langlands correspondence for $\GL_{r+1}(\R)$ (see Knapp~\cite{knapp}), we see that 
$\Sym^r(\pi_v)$ is the representation in (\ref{eq:j-mu-even}) or (\ref{eq:j-mu-odd}) exactly when the corresponding highest weight is taken as $\Sym^r(\mu)$ when defined as in Sect.\,\ref{sec:defn-symr-mu}.  

\smallskip

\noindent
$v \in S_c:$ Suppressing again the notations for $\{ \iota_v, \bar{\iota}_v\}$, we write 
$\mu_v = \{(a,b), ({\sf w}-b, {\sf w}-a)\}.$ Then, the Langlands parameter of $\pi_v = J_{\mu_v}$, see 
(\ref{eqn:j-mu-c}), is 
$\xi(a+\tfrac12, {\sf w}-a-\tfrac12) \oplus \xi(b-\tfrac12, {\sf w}-b+\tfrac12)$, where for half-integers $p,q$ by $\xi(p,q)$ is meant the character of $W_\C$ given by $z \mapsto z^p \bar{z}^q.$ Then the Langlands parameter of 
$\Sym^r(\pi_v)$ is given by:
$$
\xi(ra+\tfrac{r}{2}, r{\sf w}-ra-\tfrac{r}{2}) \oplus 
\xi((r-1)a+b+\tfrac{r-2}{2}, r{\sf w}-(r-1)a-b-\tfrac{r-2}{2}) 
\oplus \cdots \oplus 
\xi(rb-\tfrac{r}{2}, r{\sf w}-rb+\tfrac{r}{2}). 
$$ 
Hence, $\Sym^r(\pi_v)$, via (\ref{eqn:cuspidal-parameters-c}) and 
(\ref{eqn:j-mu-c}), corresponds to the highest weight 
$\Sym^r(\mu_v).$
\end{proof}

Thm.\,\ref{thm:sym-coh} and (\ref{eqn:det-coh}) lend further evidence to the discussion in \cite[Sect.\,5.2]{raghuram-shahidi-aim} relating functoriality and the property of being cohomological.

\medskip
\subsection{Special values of  Symmetric power $L$-functions}

\subsubsection{\bf A factorization of $L$-functions}
Let $\pi$ be a cuspidal automorphic representation of $G_2(\A)$, and suppose  $\Sym^r(\pi)$ is automorphic for all $r$, then our main idea behind special values of symmetric power $L$-functions is: 
$$
L_f\left(s, \Sym^r(\pi) \times \Sym^{r-1}(\pi) \right) \ = \ 
\prod_{a=1}^r
L_f\left(s, \Sym^{2a-1}(\pi)\otimes\omega_{\pi}^{r-a}\right); 
$$
see \cite[Cor.\,5.2]{raghuram-imrn}. Now suppose that $\pi \in \Coh(G_2,\mu^{\sf v})$ and suppose that the symmetric power transfers are all cuspidal, then we apply Thm.\,\ref{thm:main} to get algebraicity results for the values of 
$L_f\left(s, \Sym^r(\pi) \times \Sym^{r-1}(\pi) \right)$, and inductively, we get special value results for 
odd symmetric power $L$-functions on the right hand side. As can be seen in \cite[Prop.\,5.4]{raghuram-imrn}, where the base field $F$ was  $\Q$, carrying out this exercise can be combinatorially tedious. In the rest of this article, we carry through the above idea for symmetric cube $L$-functions of $\pi$. Algebraicity results for the critical values of symmetric cube $L$-functions are available in the literature in various special cases; see 
Garrett--Harris \cite[Thm.\,6.2]{garrett-harris}, Kim--Shahidi \cite[Prop.\,4.1]{kim-shahidi-israel}, 
Grobner--Raghuram \cite[Cor.\,8.1.2]{grobner-raghuram-ajm}, and Januszewski  \cite[Sect.\,6]{januszewski}. 
The following results are new when $F$ is a general number field and the representation $\pi$ is cohomological with respect to a general strongly pure coefficient system $\mu.$

\medskip
\subsubsection{\bf Critical points for symmetric cube $L$-functions}\label{sec:critical-sym3}

\begin{prop}
\label{prop:critical-points-sym3-L-fn}
Let $\pi \in \Coh(G_2, \mu^{\sf v})$ with $\mu \in X^+_{00}(T_2).$ Let $\mu = (\mu^\iota)_{\iota \in \cE_F}$, and 
$\mu^\iota = (a^\iota, b^\iota) \in \Z^2$ with $a^\iota \geq b^\iota.$ Let ${\sf w} = {\sf w}(\mu)$ be the purity weight of 
$\mu.$ Let $\chi$ be any Hecke character of $F$ of finite order. Then, $\tfrac12 + m \in \tfrac12 + \Z$ is critical for 
$L\left(s, \Sym^3(\pi) \otimes \chi\right)$ if and only if $m$ satisfies the inequalities in (\ref{eqn:sym3-inequalities-1}) and 
(\ref{eqn:sym3-inequalities-2}) below. For $v \in S_r$ and $\iota_v \in \cE_F$ the corresponding embedding, we have
\begin{equation}
\label{eqn:sym3-inequalities-1}
-2a^{\iota_v}-b^{\iota_v}\  \leq \ m \ \leq \ -2b^{\iota_v}-a^{\iota_v}.
\end{equation}
For $v \in S_c$ and $\{\iota_v, \bar{\iota}_v\}$ the corresponding pair of embeddings, suppose  
$\alpha_v$ stands for the minimum of 
$\{|6a^{\iota_v} - 3{\sf w}+3|, \ 
|4a^{\iota_v}+2b^{\iota_v}-3{\sf w}+1|,  \ 
|2a^{\iota_v}+4b^{\iota_v}-3{\sf w}-1|,\ 
|6b^{\iota_v} - 3{\sf w}-3|\}$, then 
\begin{equation}
\label{eqn:sym3-inequalities-2}
\frac{1 - 3{\sf w} - \alpha_v}{2} \ \leq \ m \ \leq \  \frac{-1-3{\sf w}+ \alpha_v}{2}.
\end{equation}
\end{prop}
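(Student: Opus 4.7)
The plan is to reduce the question to a purely local archimedean calculation and then inspect the poles of the relevant $\Gamma$-factors.  As noted in Sect.\,\ref{sec:critical-compatible}, whether $s=\tfrac12+m$ is critical for an $L$-function can be read off from the local $\Gamma$-factors $L_v(s,\Sym^3(\pi_v)\otimes\chi_v)$ and $L_v(1-s,\Sym^3(\pi_v)^{\sf v}\otimes\chi_v^{-1})$ for $v\in S_\infty$. So the proof splits into two independent local computations, one for each $v\in S_r$ and one for each $v\in S_c$. The key input is Thm.\,\ref{thm:sym-coh}, which identifies $\Sym^3(\pi)$ as the cohomological representation attached to $\Sym^3(\mu)$, together with the observation that a finite-order Hecke character is trivial on $\C^\times$ and at most a sign on $\R^\times$, so $\chi_v$ has no effect on any $\Gamma_\C$-factor.

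First I would treat a real place $v\in S_r$. Writing $\mu^{\iota_v}=(a,b)$ and ${\sf w}=a+b$, the proof of Thm.\,\ref{thm:sym-coh} gives $\Sym^3(\pi_v)\simeq\mathrm{Ind}\bigl(D(3\ell)|\!\det\!|^{3{\sf w}/2}\otimes D(\ell)|\!\det\!|^{3{\sf w}/2}\bigr)$ with $\ell=a-b+1$.  Hence
\[
L_v(s,\Sym^3(\pi_v)\otimes\chi_v)\;=\;\Gamma_\C(s+3a+\tfrac32)\,\Gamma_\C(s+2a+b+\tfrac12),
\]
and dually
\[
L_v(1-s,\Sym^3(\pi_v)^{\sf v}\otimes\chi_v^{-1})\;=\;\Gamma_\C(1-s-3b+\tfrac32)\,\Gamma_\C(1-s-a-2b+\tfrac12).
\]
Setting $s=\tfrac12+m$, the four regularity conditions $\Gamma_\C(\cdot)\ne\infty$ collapse (using $a\ge b$) to $-2a-b\le m\le -a-2b$, which is (\ref{eqn:sym3-inequalities-1}).

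For a complex place $v\in S_c$, I would apply the formula of Lem.\,\ref{lem:1-2-crit}/\cite[\S4]{knapp} to the parameters $(a_i,b_i)$ of $\Sym^3(\mu_v)$ read from (\ref{eqn:cuspidal-parameters-c}):
\[
(a_1,a_2,a_3,a_4)=(3a+\tfrac32,\,2a+b+\tfrac12,\,a+2b-\tfrac12,\,3b-\tfrac32),\qquad b_i=3{\sf w}-a_i.
\]
Since $\chi_v$ is trivial, one obtains
\[
L_v(s,\Sym^3(\pi_v)\otimes\chi_v)\;=\;\prod_{i=1}^{4}\Gamma_\C\!\Bigl(s+\tfrac{3{\sf w}}{2}+\tfrac{|a_i-b_i|}{2}\Bigr),
\]
with $a_i-b_i$ equal, respectively, to $6a-3{\sf w}+3,\ 4a+2b-3{\sf w}+1,\ 2a+4b-3{\sf w}-1,\ 6b-3{\sf w}-3$.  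A parallel formula holds for the dual at $1-s$.  Regularity of every factor at $s=\tfrac12+m$ translates into $2m+3{\sf w}+|a_i-b_i|\ge1$ for every $i$, and regularity of the dual factor translates into $-2m-3{\sf w}+|a_i-b_i|\ge1$ for every $i$; tightening these over all $i$ produces precisely the bounds (\ref{eqn:sym3-inequalities-2}) with $\alpha_v=\min_i|a_i-b_i|$.

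The chain of reasoning is completely reversible at each step, so the combined inequalities (\ref{eqn:sym3-inequalities-1}) and (\ref{eqn:sym3-inequalities-2}), one for each archimedean place, are equivalent to criticality of $\tfrac12+m$ for $L(s,\Sym^3(\pi)\otimes\chi)$. The only genuine subtlety is bookkeeping in the complex case: one must correctly pair the four cuspidal parameters of $\Sym^3(\mu_v)$ with the four absolute values entering $\alpha_v$, and verify that in both the primal and the dual range it is the \emph{minimum} (not the maximum) of the $|a_i-b_i|$ that produces the binding constraint.  Once this is done, the two displayed inequalities fall out.
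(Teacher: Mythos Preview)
Your proposal is correct and follows essentially the same approach as the paper: reduce to local archimedean $\Gamma$-factors, treat real and complex places separately, and read off the inequalities from the pole conditions. Your write-up is in fact more explicit than the paper's own sketch (you display the dual $L$-factor at real places and justify why a finite-order $\chi$ leaves the $\Gamma_\C$-factors untouched), but the underlying argument is identical.
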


\begin{proof}
The proof is a somewhat tedious exercise, and we merely sketch it leaving all the details to the reader. Consider two cases: \\

\noindent
$v \in S_r$: suppressing the superscript $\iota_v$, let $\mu_v = (a,b)$. Then up to nonzero constants and exponential functions, we have
$$
L(s, \Sym^3(\pi_v)) \ = \ 
\Gamma\left(s + \frac{3{\sf w}}{2} + \frac{a-b+1}{2}  \right) 
\Gamma\left(s  + \frac{3{\sf w}}{2} + \frac{3(a-b+1)}{2}  \right). 
$$
We leave it to the reader to check that $L(s, \Sym^3(\pi_v))$ and $L(1-s, \Sym^3(\pi_v)^{\sf v})$ are regular at 
$s = \tfrac12 + m$ if and only if $m$ satisfies (\ref{eqn:sym3-inequalities-1}). \\

\noindent
$v \in S_c$: suppressing the superscripts $\iota_v$ and $\bar{\iota}_v$, up to nonzero constants and exponential functions, we have
\begin{multline*}
L(s, \Sym^3(\pi_v)) \ = \ 
\Gamma\left(s + \frac{3{\sf w}}{2} + \frac{|6a - 3{\sf w}+3|}{2}\right) 
\Gamma\left(s + \frac{3{\sf w}}{2} + \frac{|4a + 2b - 3{\sf w}+1|}{2} \right) \cdot \\
\cdot \Gamma\left(s + \frac{3{\sf w}}{2} + \frac{|2a + 4b - 3{\sf w}-1|}{2} \right)
\Gamma\left(s + \frac{3{\sf w}}{2} + \frac{|6b - 3{\sf w}-3|}{2}\right) . 
\end{multline*}
We leave it to the reader to check that $L(s, \Sym^3(\pi_v))$ and $L(1-s, \Sym^3(\pi_v)^{\sf v})$ are regular at 
$s = \tfrac12 + m$ if and only if $m$ satisfies (\ref{eqn:sym3-inequalities-2}). 
\end{proof}

The following special case of a parallel weight when $F$ has at least one real place is interesting, and the reader can just as well consider only this case: 

\begin{cor}
\label{cor:parallel-weight-critical}
Let $\pi \in \Coh(G_2, \mu^{\sf v})$ with $\mu \in X^+_{00}(T_2)$ being a parallel weight 
$\mu^\iota = (a, b) \in \Z^2$ with $a \geq b.$  Suppose $S_r \neq \emptyset$, i.e., $F$ is not totally imaginary then 
the purity weight is ${\sf w} = {\sf w}(\mu) = a+b$ and furthermore, the set 
$$
\left\{ \tfrac12 + m \in \tfrac12 + \Z \ : \ \ -2a - b  \ \leq \ m \ \leq \ -a-2b \right\}
$$
is the critical set for $L\left(s, \Sym^3(\pi) \otimes \chi\right)$, $L\left(s, \pi \otimes \omega_\pi\chi\right)$ and 
$L\left(s, \Sym^2(\pi) \times \pi \otimes \chi\right)$. 
\end{cor}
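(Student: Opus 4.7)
My plan is to deduce the corollary by combining three independent critical-set computations: Prop.\,\ref{prop:critical-points-sym3-L-fn} for the symmetric cube $L$-function, the $\GL_2\times\GL_1$ case of Thm.\,\ref{thm:critical-compatible} for $L(s,\pi\otimes\omega_\pi\chi)$, and the local factorization
$L_v(s,\Sym^2(\pi_v)\times\pi_v\otimes\chi_v)=L_v(s,\Sym^3(\pi_v)\otimes\chi_v)\cdot L_v(s,\pi_v\otimes\omega_{\pi_v}\chi_v)$ at every place $v$. The first step is to observe that because $S_r\neq\emptyset$ and $\mu$ is parallel, purity at any real place forces ${\sf w}={\sf w}(\mu)=a+b$; this simplifies every subsequent calculation.

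For the symmetric cube $L$-function, I specialize Prop.\,\ref{prop:critical-points-sym3-L-fn}. At any real place, substituting $a^{\iota_v}=a$, $b^{\iota_v}=b$ into (\ref{eqn:sym3-inequalities-1}) yields $-2a-b\leq m\leq -a-2b$ directly. At a complex place, I would compute the four quantities defining $\alpha_v$ with ${\sf w}=a+b$: they become $|3(a-b)+3|$, $|a-b+1|$, $|a-b+1|$, $|3(a-b)+3|$, so $\alpha_v=a-b+1$, and plugging into (\ref{eqn:sym3-inequalities-2}) gives $\tfrac{-4a-2b}{2}\leq m\leq\tfrac{-2a-4b}{2}$, which is the same range. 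Thus the two cases of Prop.\,\ref{prop:critical-points-sym3-L-fn} collapse into a single interval.

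For $L(s,\pi\otimes\omega_\pi\chi)$, by (\ref{eqn:det-coh}) we have $\omega_\pi\chi\in\Coh(G_1,{\rm det}(\mu)^{\sf v})$ with ${\rm det}(\mu)^\iota=a+b$ also parallel. Thm.\,\ref{thm:critical-compatible} identifies the critical set with $\{m\in\Z:\mu^{\sf v}\succ {\rm det}(\mu)+m\}$ once the compatibility hypothesis is verified; at each embedding the interlacing condition (via Cor.\,\ref{cor:branching}) reads $-b\geq(a+b)+m\geq-a$, i.e., $-2a-b\leq m\leq -a-2b$. Since $a\geq b$ the interval is non-empty, so the hypothesis holds and this is the full critical set.

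Finally, for the Rankin--Selberg $L$-function I will use the representation-theoretic identity $\Sym^2(V)\otimes V\cong\Sym^3(V)\oplus(V\otimes\det V)$ for any $2$-dimensional $V$; applied to the local Langlands parameter of $\pi_v$ at an archimedean place (a tool already employed in the proof of Thm.\,\ref{thm:sym-coh}), this gives the factorization of archimedean $L$-factors. Consequently $L_\infty(s,\Sym^2(\pi)\times\pi\otimes\chi)$ and $L_\infty(1-s,(\Sym^2(\pi)\times\pi\otimes\chi)^{\sf v})$ are both holomorphic at $s=\tfrac12+m$ if and only if both factors are, so the critical set for $L(s,\Sym^2(\pi)\times\pi\otimes\chi)$ is the intersection of the two preceding sets, which is again $-2a-b\leq m\leq-a-2b$. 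The only step that is more than a mechanical substitution is the complex-place reduction of $\alpha_v$ in (\ref{eqn:sym3-inequalities-2}) where one must track the signs of the absolute values; this is mild since $a\geq b$ makes every relevant inequality unambiguous, so I do not expect a serious obstacle.
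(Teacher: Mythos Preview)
Your argument is correct. For $L(s,\Sym^3(\pi)\otimes\chi)$ you specialize Prop.\,\ref{prop:critical-points-sym3-L-fn} exactly as the paper does, and for $L(s,\pi\otimes\omega_\pi\chi)$ you apply the $n=2$ case of Thm.\,\ref{thm:critical-compatible} exactly as the paper does.

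For $L(s,\Sym^2(\pi)\times\pi\otimes\chi)$ you take a different route from the paper. The paper applies the $n=3$ case of Thm.\,\ref{thm:critical-compatible} directly: with $\Sym^2(\mu)^{\iota,{\sf v}}=(-2b,-(a+b),-2a)$ and $(\mu+m)^\iota=(a+m,b+m)$, the interlacing condition $-2b\geq a+m\geq -(a+b)\geq b+m\geq -2a$ collapses to $-2a-b\leq m\leq -a-2b$. Your approach via the archimedean factorization $\Sym^2\otimes\mathrm{std}\cong\Sym^3\oplus(\mathrm{std}\otimes\det)$ on Langlands parameters, together with the nonvanishing of local $L$-factors, is equally valid and has the virtue of recycling the two results already in hand. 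The paper's route is more uniform (one invocation of Thm.\,\ref{thm:critical-compatible} per Rankin--Selberg pair) and, as a side effect, explicitly verifies the compatibility hypothesis~(\ref{eqn:comp}) for the pair $(\Sym^2(\mu),\mu)$, which is assumed later in Thm.\,\ref{thm:sym3}; your argument establishes the corollary but would need that extra check done separately when Thm.\,\ref{thm:sym3} is invoked.
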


\begin{proof}
The inequalities in (\ref{eqn:sym3-inequalities-1}) and 
(\ref{eqn:sym3-inequalities-2}) boil down to $-2a - b  \leq m \leq  -a-2b$ which takes care of 
$L\left(s, \Sym^3(\pi) \otimes \chi\right).$ The critical sets for $L\left(s, \pi \otimes \omega_\pi\chi\right)$ and 
$L\left(s, \Sym^2(\pi) \times \pi \otimes \chi\right)$ follow from $n=3$ and $n=2$ cases respectively of 
Thm.\,\ref{thm:critical-compatible}; we leave the details to the reader. 
\end{proof}

\subsubsection{\bf The compatibility condition (\ref{eqn:comp})}

Let $\pi \in \Coh(G_2, \mu^{\sf v})$ with $\mu \in X^+_{00}(T_2).$ Consider two successive symmetric power transfers of 
$\pi$. We address the question of whether Thm.\,\ref{thm:main} is applicable to 
$L(s, \Sym^r(\pi) \times \Sym^{r-1}(\pi))$, i.e.,  after applying Thm.\,\ref{thm:sym-coh}, 
we ask whether the transferred weights $\Sym^r(\mu)$ and $\Sym^{r-1}(\mu)$ 
satisfy the compatibility condition (\ref{eqn:comp}). For this we are seeking an integer $j$ such that for all 
$\iota \in \cE_F$, we should have $\Sym^r(\mu^\iota)^{\sf v} \succ \Sym^{r-1}(\mu^\iota) + j$, i.e., 
$$
-rb^\iota \geq (r-1)a^\iota+j \geq -(r-1)b^\iota-a^\iota \geq (r-2)a^\iota+b^\iota + j \geq \cdots 
\geq (r-1)b^\iota+j \geq -ra^\iota.
$$
Consider two cases: if $\iota$ corresponds to a real place then we want
$$
-ra^\iota-(r-1)b^\iota \ \leq \ j \ \leq \ -(r-1)a^\iota-rb^\iota, 
$$
and if $\iota$ corresponds to a complex place, then not only do we want the above (at $\iota$) but also at the conjugate embedding, since $(a^{\bar{\iota}}, b^{\bar{\iota}}) = ({\sf w}-b^\iota, {\sf w}-a^\iota)$, we would also want 
$$
-(2r-1){\sf w} + (r-1)a^\iota + rb^\iota \ \leq \ j \ \leq \  -(2r-1){\sf w} +ra^\iota + (r-1)b^\iota. 
$$

Such an integer $j$ may not exist, because, if $j$ exists, then both the above inequalities give the necessary condition: 
$$
2ra^\iota + 2(r-1)b^\iota \geq (2r-1){\sf w} \geq 2(r-1)a^\iota + 2rb^\iota, 
$$
which need not be satisfied. (For example, take $F$ to be an imaginary quadratic extension of $\Q$, and 
take $a^\iota = b^\iota$, ${\sf w} \neq 2a^\iota$, and any $r \geq 1.$) 

On the other hand, suppose $F$ is not totally imaginary, i.e., $S_r \neq \emptyset$, then 
${\sf w} = a^\iota + b^\iota$ and all the above conditions are equivalent to 
$-r{\sf w} + b^\iota \leq j  \leq -r{\sf w} + a^\iota.$
For brevity, let $j' = j + r {\sf w}.$ Then we are seeking $j'$ such that $b^\iota \leq j' \leq a^\iota$ for all $\iota$; this is possible because $b^\iota \leq {\sf w}/2 \leq a^\iota$; take $j' = [{\sf w}/2].$

%The reader will easily see that these inequalities need not be satisfied in general, i.e., the compatibility condition for two successive powers is in fact a nontrivial restriction on the weight $\mu$. However, there are examples to make the discussion nonempty. Indeed, take the situation of Cor.\,\ref{cor:parallel-weight-critical}, i.e., consider a parallel weight $\mu$ over a field $F$ with at least one real place. Then the compatibility condition for $\Sym^r(\mu)$ and $\Sym^{r-1}(\mu)$ is satisfied, since the above inequalities further simplify to $-r{\sf w} + b   \leq j \leq -r{\sf w} + a$; we may take $j = -r{\sf w} + b.$ 

\subsubsection{\bf Proof of Theorem~\ref{thm:sym3}}
\label{sec:proof-thm-sym}
The proof, as explained in the introduction, is to explicate 
Thm.\,\ref{thm:main} for $L_f(\tfrac12+m, \pi \otimes\omega_{\pi}\xi)$ and 
$L_f(\tfrac12+m, \Sym^2(\pi) \times \pi \otimes \xi )$ which we take up in the following two paragraphs:  

\medskip

\paragraph{\bf Thm.\,\ref{thm:main} for $L_f(\tfrac12+m, \pi \otimes\omega_{\pi}\xi)$}
If $\pi \in \Coh(G_2, \mu^{\sf v})$ then $\omega_\pi\xi \in \Coh(G_1, {\rm det}(\mu)^{\sf v}).$ We are in the situation when $n=2$, hence $\epsilon = \eta$ and  
$$
\eta_v \ = \ \omega_{\pi_v}(-1) \cdot \xi_v(-1) \cdot (-1)^{{\sf w}({\rm det}(\mu))/2} \ = \ \xi_v(-1).
$$
Hence $\eta = \epsilon_\xi.$ Thm.\,\ref{thm:main} takes the form: 
\begin{equation}
\label{eqn:gl2-gl1}
\begin{split}
L_f(\tfrac12+m, \pi \otimes\omega_{\pi}\xi) 
& \sim \ p^{\epsilon_m \epsilon_\xi}(\pi) \ \G(\omega_\pi\xi) \ p^{\epsilon_\xi, \epsilon_\xi}(\mu+m, {\rm det}(\mu)) \\
& \sim \ p^{\epsilon_m \epsilon_\xi}(\pi) \ \G(\omega_\pi) \ \G(\xi) \ p^{\epsilon_\xi, \epsilon_\xi}(\mu+m, {\rm det}(\mu)). 
\end{split}
\end{equation}

\medskip

\paragraph{\bf Thm.\,\ref{thm:main} for $L_f(\tfrac12+m, \Sym^2(\pi) \times \pi \otimes \xi )$}
Here, $n=3$, hence $\eta = -\epsilon$ and 
$$
\epsilon_v = \omega_{\Sym^2(\pi_v)}(-1) \cdot (-1)^{{\sf w}(\Sym^2(\mu))/2} = 
\omega_{\pi_v}^3(-1)\cdot (-1)^{\sf w} = 1.
$$
Hence $\epsilon = \epsilon_+$ and $\eta = -\epsilon_+ =: \epsilon_-.$
{\small 
\begin{equation}\label{eqn:gl3-gl2}
\begin{split}
L_f(\tfrac12+m, \Sym^2(\pi) \times \pi \otimes \xi ) & \sim \ 
p^{\epsilon_+}(\Sym^2(\pi)) \cdot p^{-\epsilon_m}(\pi \otimes \xi) \cdot \G(\omega_{\pi \otimes \xi}) \cdot 
p_\infty^{\epsilon_+, \epsilon_-}(\Sym^2(\mu), \mu+m) \\
& \sim 
p^{\epsilon_+}(\Sym^2(\pi)) \cdot p^{-\epsilon_m \epsilon_\xi}(\pi)\G(\xi) \cdot \G(\omega_\pi)\G(\xi)^2 \cdot
p_\infty^{\epsilon_+, \epsilon_-}(\Sym^2(\mu), \mu+m).  
\end{split}
\end{equation}
}
Proof of Theorem~\ref{thm:sym3} follows from (\ref{eqn:gl2-gl1}) and (\ref{eqn:gl3-gl2}).

\bigskip

\end{document}